\xpatchcmd{\@thm}{\thm@headpunct{.}}{\thm@headpunct{}}{}{}
\newtheorem{theorem}{Theorem}
\newtheorem{lemma}{Lemma}
\newtheorem{remark}{Remark}
\newtheorem{corollary}{Corollary}
\newtheorem{proposition}{Proposition}
\xpatchcmd{\@thm}{\thm@headpunct{.}}{\thm@headpunct{}}{}{}
\newcommand{\bsigma}{\bm{\Sigma}}
\newcommand{\bgamma}{\bm{\Gamma}}
\newcommand{\blambda}{\bm{\Lambda}}
\DeclareMathOperator{\supp}{spt}
\newcommand{\R}{\mathbb{R}}
\newcommand{\N}{\mathbb{N}}
\newcommand{\eps}{\varepsilon}
\newcommand\numberthis{\addtocounter{equation}{1}\tag{\theequation}}
\newcommand{\Wp}{\mathsf{W}_p}
\newcommand{\cA}{\mathcal{A}}
\newcommand{\cC}{\mathcal{C}}
\newcommand{\cD}{\mathcal{D}}
\newcommand{\cF}{\mathcal{F}}
\newcommand{\cG}{\mathcal{G}}
\newcommand{\cP}{\mathcal{P}}
\newcommand{\cR}{\mathcal{R}}
\newcommand{\cS}{\mathcal{S}}
\newcommand{\cX}{\mathcal{X}}
\newcommand{\cY}{\mathcal{Y}}
\newcommand{\sd}{\mathsf{d}}
\newcommand{\sF}{\mathsf{F}}
\newcommand{\sG}{\mathsf{G}}
\newcommand{\sh}{\mathsf{h}}
\newcommand{\sW}{\mathsf{W}}
\newcommand{\EE}{\mathbb{E}}
\newcommand{\NN}{\mathbb{N}}
\newcommand{\PP}{\mathbb{P}}
\newcommand{\RR}{\mathbb{R}}
\newcommand{\ZZ}{\mathbb{Z}}
\newcommand{\op}{\mathrm{op}}
\newcommand{\F}{\mathrm{F}}
\newcommand{\lip}{\mathrm{Lip}}
\newcommand{\bP}{\mathbf{P}}
\newcommand{\bQ}{\mathbf{Q}}
\newcommand{\W}{\mathsf{W}}
\newcommand{\bA}{\mathbf{A}}
\newcommand{\dkl}{\mathsf{D}_{\mathsf{KL}}}
\newcommand*{\dd}{\, \mathrm{d}}
\newcommand{\GW}{\mathsf{D}}
\newcommand{\EGW}{\mathsf{S}}
\definecolor{darkblue}{rgb}{0.0,0.0,0.66}  
\definecolor{darkred}{rgb}{100,0.0,0.0} 
\begin{document}

\title[Gromov-Wasserstein Distances: Entropic Regularization, Duality, and Sample Complexity]{Gromov-Wasserstein Distances:\\Entropic Regularization, Duality, and Sample Complexity}

\thanks{Z. Goldfeld is partially supported by the NSF CAREER award under grant CCF-2046018 and NSF grant DMS-2210368. B. K. Sriperumbudur is partially supported by the NSF CAREER award under grant DMS-1945396 and NSF grant DMS-19453.}

\author[Z. Zhang]{Zhengxin Zhang}
\address[Z. Zhang]{Center for Applied Mathematics, Cornell University.}
\email{zz658@cornell.edu}

\author[Z. Goldfeld]{Ziv Goldfeld}
\address[Z. Goldfeld]{School of Electrical and Computer Engineering, Cornell University.}
\email{goldfeld@cornell.edu}

\author[Y. Mroueh]{Youssef Mroueh}
\address[Y. Mroueh]{IBM Research AI.}
\email{mroueh@us.ibm.com}

\author[B. K. Sriperumbudur]{Bharath K. Sriperumbudur}

\address[B. K. Sriperumbudur]{Department of Statistics, Pennsylvania State University.}
\email{bks18@psu.edu}

\begin{abstract}
The Gromov-Wasserstein (GW) distance, rooted in optimal transport (OT) theory, quantifies dissimilarity between metric measure spaces and provides a framework for aligning heterogeneous datasets. While computational aspects of the GW problem have been widely studied, a duality theory and fundamental statistical questions concerning empirical convergence rates remained obscure. This work closes these gaps for the quadratic GW distance over Euclidean spaces of different dimensions $d_x$ and $d_y$. We treat both the standard and the entropically regularized GW distance, and derive dual forms that represent them in terms of the well-understood OT and entropic OT (EOT) problems, respectively. This enables employing proof techniques from statistical OT based on regularity analysis of dual potentials and empirical process theory, using which we establish the first GW empirical convergence rates.
The derived two-sample rates are $n^{-2/\max\{\min\{d_x,d_y\},4\}}$ (up to a log factor when $\min\{d_x,d_y\}=4$) for standard GW and $n^{-1/2}$ for EGW, which matches the corresponding rates for standard and entropic OT.  {The parametric rate for EGW is evidently optimal, while for standard GW we provide matching lower bounds, which establish sharpness of the derived rates.} We also study stability of EGW in the entropic regularization parameter and prove approximation and continuity results for the cost and optimal couplings. Lastly, the duality is leveraged to shed new light on the open problem of the one-dimensional GW distance between uniform distributions on $n$ points, illuminating why the identity and anti-identity permutations may not be optimal. Our results serve as a first step towards a comprehensive statistical theory as well as computational advancements for GW distances, based on the discovered dual formulations.
\end{abstract}

\keywords{Empirical convergence, entropic regularization, Gromov-Wasserstein distance, sample complexity, strong duality.}

\maketitle

\section{Introduction}

The Gromov-Wasserstein (GW) distance, proposed by M\'emoli in \cite{Memoli11}, quantifies discrepancy between probability distributions supported on different metric spaces by aligning them with one another. Given two metric measure (mm) spaces $(\cX,\sd_\cX,\mu)$ and $(\cY,\sd_\cY,\nu)$, the $(p,q)$-GW distance between them is \cite{sturm2012space} 
\begin{equation}
 \GW_{p,q}(\mu,\nu)\coloneqq  \inf_{\pi\in\Pi(\mu,\nu)}\left( \int_{\cX\times\cY}\int_{\cX\times\cY}\big| \sd_\cX(x,x')^q-\sd_\cY(y,y')^q\big|^p d\pi\otimes\pi(x,y,x',y')\right)^{\frac{1}{p}},\label{eq:GW_intro}
\end{equation}
where $\Pi(\mu,\nu)$ is the set of all couplings between $\mu$ and $\nu$. The GW distances thus equals the least amount of distance distortion one can achieve between the mm spaces when optimizing over all possible alignments thereof (as modeled by couplings). This approach, which is rooted in optimal transport (OT) theory \cite{villani2009optimal,santambrogio15}, is an $L^p$ relaxation of the Gromov-Hausdorff distance between metric spaces and enjoys various favorable properties. Among others, the GW distance (i)~identifies pairs of mm spaces between which there exists an measure preserving isometry; (ii) defines a metric on the space of all mm spaces modulo the aforementioned isomorphic relation; and (iii) captures empirical convergence of mm space, i.e., when $\mu,\nu$ are replaced with their empirical measures $\hat{\mu}_n,\hat{\nu}_n$ based on $n$ samples. As such, the GW framework has been utilized for many applications concerning  heterogeneous data, including single-cell genomics \cite{blumberg2020mrec,demetci2020gromov}, alignment of language models \cite{alvarez2018gromov}, shape and graph matching \cite{memoli2009spectral,xu2019gromov,xu2019scalable,koehl2023computing}, heterogeneous domain adaptation \cite{yan2018semi}, and generative modeling~\cite{bunne2019learning}.

While such applications predominantly run on sampled data, a statistical GW theory to guarantee valid estimation and inference has remained elusive. This gap can be attributed, in part, to the quadratic (in $\pi$) structure of the GW functional, which prevents directly using well-developed proof techniques from statistical OT. Indeed, the linear OT problem enjoys strong duality, which enables analyzing empirical OT distances via techniques from empirical process theory, such as chaining, entropy integral bounds, and the functional delta method. These approaches have proven central to the development of statistical OT, leading to a comprehensive account of empirical convergence rates \cite{dudley1969speed,chizat2020faster,manole2021sharp,hundrieser2022empirical} and limit distributions of both classical \cite{Sommerfeld2018,Tameling2019,del2019central,manole2021plugin,hundrieser2022,goldfeld2022statistical} and regularized OT distances \cite{mena2019statistical,bigot2019central,klatt2020empirical,goldfeld2020gaussian,goldfeld2022limitsmooth,goldfeld2022statistical,del2022improved,goldfeld2022limit};  cf. Remarks \ref{rem:duality_OT} and \ref{rem:duality_EOT} ahead for a detailed discussion about the utility of duality for the statistical analysis of standard and regularized OT, respectively. 
For the GW distance, on the other hand, while we know that $\GW_{p,q}(\hat{\mu}_n,\hat{\nu}_n)\to \GW_{p,q}(\mu,\nu)$ as $n\to\infty$ \cite{Memoli11},\footnote{\cite{Memoli11} established this convergence for compact mm spaces and $q=1$, but the argument readily extends to any $q\geq 1$ and arbitrary mm space, so long that $\mu,\nu$ have bounded $pq$-th moments.} the rate at which this convergence happens is an open problem of theoretical and practical importance. 
This work closes this gap by deriving a dual formulation for the 
(standard and entropic) $(2,2)$-GW distance over Euclidean spaces, and leveraging it to establish the first empirical convergence rates for the GW~problem.

\subsection{Contribution} For probability distributions $\mu$ and $\nu$ supported in $\RR^{d_x}$ and $\RR^{d_y}$, respectively, we study both the standard $(2,2)$-GW distances from \eqref{eq:GW_intro} and its entropically regularized version~\cite{solomon2016entropic}
\[
\EGW_\eps(\mu,\nu)\coloneqq \inf_{\pi\in\Pi(\mu,\nu)}\iint\big| \|x-x'\|^2-\|y-y'\|^2\big|^2  d \pi\otimes \pi(x,y,x',y')+ \eps \dkl(\pi \| \mu\otimes \nu),
\]
where $\dkl(\cdot\|\cdot)$ is the Kullback-Leibler (KL) divergence. 
 {The interest in entropic GW (EGW) stems from its computational tractability \cite{solomon2016entropic,peyre2016gromov,scetbon2022linear,rioux2023entropic}, which makes it a popular approach in practice.} Our first main contribution is a duality theory for GW and EGW, which linearizes these quadratic functionals and ties them, respectively, to the well understood problems of OT and EOT. 
This is done by introducing an auxiliary, matrix-valued optimization variable $\mathbf{A}\in\RR^{d_x\times d_y}$ that enables linearizing the dependence on the coupling. We then interchange the optimization over $\mathbf{A}$ and $\pi$ and identify the inner problem as classical or entropic OT (EOT) with respect to (w.r.t.) a cost function $c_\mathbf{A}$ that depends on $\mathbf{A}$. 
Upon verifying that $c_\mathbf{A}$ satisfies mild regularity conditions, we invoke OT or EOT duality to arrive at a dual formulation for $\GW_{2,2}(\mu,\nu)^2$ and $\EGW_\eps(\mu,\nu)$. The dual form involves optimization over $\mathbf{A}$, which we show can be restricted to a hypercube whose side length depends only on the second moments of $\mu,\nu$. 

The GW and EGW dual forms enable an analysis of expected empirical convergence rates by drawing upon proof techniques from statistical OT. Namely, we consider the rates at which $\EE\big[\big|\GW_{2,2}(\mu,\nu)^2-\GW_{2,2}(\hat{\mu}_n,\hat{\nu}_n)^2\big|\big]$ and $\EE\big[\big|\EGW_{\eps}(\mu,\nu)-\EGW_{\eps}(\hat{\mu}_n,\hat{\nu}_n)\big|\big]$ decay to zero with $n$, as well as the one-sample case where $\nu$ is not estimated. Invoking strong duality we bound the empirical estimation error by the suprema of empirical processes indexed by OT or EOT dual potentials w.r.t. the cost $c_\mathbf{A}$, supremized over all feasible matrices $\mathbf{A}$. We then study the regularity of optimal potentials, uniformly in $\mathbf{A}$, which is the main technical difference from the corresponding OT and EOT analyses. For EGW, we show that the potentials are H\"older smooth to an arbitrary order and provide bounds on the growth rate of their H\"older norm. Combining the regularity theory with a chaining argument and entropy integral bounds for H\"older classes, we arrive at $n^{-1/2}$ as the empirical convergence rate for EGW. This parametric rate holds in any ambient dimensions $d_x,d_y$ and is inline with EOT empirical rates \cite{genevay2019sample,mena2019statistical}. For the unregularized GW problem, we focus on compactly supported distributions and exploit smoothness and marginal-concavity of the cost $c_\mathbf{A}$ to show that optimal potentials are concave and Lipschitz. Following a similar  analysis to the entropic case while leveraging the so-called lower complexity adaptation (LCA) principle form \cite{hundrieser2022empirical} leads to an $n^{-2/\max\{\min\{d_x,d_y\},4\}}$ upper bound on the two-sample rate of the quadratic GW distance (up to a log factor when $\min\{d_x,d_y\}=4$; in the one-sample case, $d_y$ is omitted).  {We then establish matching lower bounds on the one- and two-sample empirical estimation errors, demonstrating that the said rates are sharp. The lower bound proof is constructive and utilizes a novel inequality between the quadratic GW distance and the 2-Wasserstein procrustes \cite{grave2019unsupervised}, which may be of independent interest.}

We also address basic structural properties of the GW and EGW distances. First, we study stability of the entropic variant in the regularization parameter $\eps$ and establish an $O\big(\eps\log(1/\eps)\big)$ bound on the gap between (squared) GW and EGW. This bound matches the entropic approximation error in the standard OT case \cite{genevay2019sample}. However, unlike the result from \cite{genevay2019sample}, that accounts only for compactly supported distributions, our derivation relies on maximum entropy inequalities and holds for arbitrary distributions. After treating the entropic approximation of the GW cost, we prove that optimal entropic couplings weakly converge towards an optimal GW coupling as $\eps\to0$ by leveraging the notion of $\Gamma$-convergence. Lastly, we revisit the open problem of the one-dimensional GW distance between uniform distributions on $n$ points and use our duality theory to shed new light on it. We consider the peculiar example from \cite{beinert2022assignment}, where, contrary to common belief (cf. \cite{vayer2020sliced}), the identity and anti-identity permutations were shown to not necessarily be optimal. Our dual form allows representing the GW distance on $\RR$ as a sum of concave and convex functions, explaining why the optimum need not be attained at the boundary. We verify and visualize the different regimes of optimal solutions via simple numerical simulations.

\subsection{Literature Review}\label{subsec:lit_review}

The GW distance was first proposed in \cite{Memoli11} as an $L^p$ relaxation of the Gromov-Hausdorff distance between metric spaces. Basic structural properties of the distance were also established in that work, with more advanced aspects concerning topology and curvature addressed in \cite{sturm2012space}. The existence of Gromov-Monge maps was studied in \cite{dumont2022existence}, showing that optimal couplings are induced by a bimap (viz. two-way map) under quite general conditions. Targeting analytic solutions, optimal couplings between Gaussian distributions were explored in \cite{delon2022gromov}, but only upper and lower bounds on the GW value were derived. An exact characterization of the optimal coupling and cost is known for the entropic inner product GW distance between Gaussians \cite{le2022entropic}.

As GW distances grew in popularity for applications, computational tractability became increasingly important. However, exact computation of the GW distance is generally a quadratic assignment problem, which is NP-complete \cite{Commander2005}. For this reason, significant attention was devoted to variants of the GW problem that circumvent this computational hardness. The sliced GW distance \cite{vayer2020sliced} attempts to reduce the computational burden by considering the average of GW distances between one-dimensional projections of the marginals. However, unlike one-dimensional OT, the GW problem does not have a known simple solution even in one dimension \cite{beinert2022assignment}. Another approach is to relax the strict marginal constraints to obtain the unbalanced GW distance \cite{sejourne2021unbalanced}, which lends well for convex/conic relaxations. A variant that directly optimizes over bi-directional Monge maps between the mm space was considered in \cite{zhang2022cycle}. While these methods offer certain advantages, it is the approach based on entropic regularization that is most frequently used in practice. This is since EGW is computable via iterative optimization routines that employ Sinkhorn iterations \cite{solomon2016entropic,peyre2016gromov,scetbon2022linear,rioux2023entropic}, which allows scalability and parallelization in large-scale applications.

\medskip

 {
\noindent\textbf{Follow-up work.} We address two follow-up works that appeared on arXiv several months after the original submission of this work (and its upload to arXiv on March 2). The paper \cite{rioux2023entropic}, by one of the authors of the current work and other collaborators, leveraged the duality theory proposed herein to study algorithms, limit theorems, and resampling methods for the EGW distance. Their approach relied on a stability analysis of the dual formulation in $\mathbf{A}$, based on which $L$-smoothness and sufficient conditions for convexity of the objective function were derived. These, in turn, were used to propose the first algorithms for computing EGW in $O(n^2)$ time (where $n$ is the number of support points of the two marginals) subject to formal convergence guarantees in both the convex and non-convex regimes. That work also considered stability of the dual in the marginals $\mu,\nu$, which led to a limit distribution theory for the empirical EGW distance and, under additional conditions, asymptotic normality, bootstrap consistency, and semiparametric efficiency. Our results along with those form \cite{rioux2023entropic} now provide the statistical and computational foundations for valid estimation and inference for the EGW distance, with efficient implementations via the aforementioned algorithms.  

Another notable follow-up work is \cite{groppe2023lower}, which appeared online two months after our paper was posted to arXiv and submitted to the journal. That work studied the LCA principle from \cite{hundrieser2022empirical} under the EOT setting. In particular, they observed that the dependence on dimension in our empirical convergence rate bounds can be relaxed from $\max\{d_x,d_y\}$ to $\min\{d_x,d_y\}$, provided that the populations are compactly supported. For EGW, as the rate is parametric and dimension-free, this observation only serves to improve the constant. Furthermore, our EGW bounds hold for distributions with unbounded supported, which are beyond the scope of \cite{groppe2023lower}. For the standard GW distance, our original submission proved an $n^{-2/\max\{d_x,d_y,4\}}$ upper bound on the two-sample rate, but Remark~5.6 of \cite{groppe2023lower} observed that it can be improved to $n^{-2/\max\{\min\{d_x,d_y\},4\}}$ and provided high-level proof outline.\footnote{Our $n^{-2/\max\{d_x,4\}}$ one-sample rate bound for standard GW is unaffected by the results of \cite{groppe2023lower}.} Herein we provide a full proof of the two-sample upper bound with the dependence on the smaller dimension, and also establish new lower bounds that demonstrate the sharpness of the derived one- and two-sample empirical convergence rates. The reader is referred to Remarks \ref{rem:EGW_min_dimension} and \ref{rem:GW_min_dimension} for a detailed discussion and comparison to \cite{groppe2023lower}.}

\subsection{Organization}
The rest of this paper is organized as follows. In \cref{sec:background}, we collect background material on the OT, EOT, GW, and EGW problems. \cref{sec:EGW} treats the EGW distance, covering stability in the regularization parameter, duality, and sample complexity. In \cref{sec:GW}, we extend the duality and the statistical treatment to the (unregularized) GW distance itself. \cref{sec:proofs} contains proofs for Sections \ref{sec:EGW} and \ref{sec:GW}. \cref{sec:summary} leaves concluding remarks and discusses future directions. The Appendix contains proofs of technical results that are omitted from the main text.

\subsection{Notation}
Let $\| \cdot \|$ and $\langle \cdot, \cdot \rangle$ denote the Euclidean norm and inner product, respectively. Let $B_d(x,r) \coloneqq  \{ y \in \R^{d} : \|y-x\| \le r \}$ denote the closed ball with center $x$ and radius $r$. We use $\|\cdot\|_\op$ and $\|\cdot\|_\F$ for the operator and Frobenius norms of matrices, respectively. For a topological space $S$, $\cP(S)$ denotes the class of Borel probability measures on it. For $p\in[1, \infty)$, let $\cP_p(\RR^d)$ be the space of Borel probability measures with finite $p$-th absolute moment, i.e., $M_p(\rho)\coloneqq \int_{\RR^d}\|x\|^p d \rho(x)<\infty$ for any $\rho\in\cP_p(\RR^d)$. For a signed Borel measure $\rho$ and a measurable function $f$, we use the shorthand $\rho f := \int f d\rho$, whenever the integral exists. The support of  $\rho\in\cP(\RR^d)$ is $\supp(\rho)$, while its covariance matrix (when exists) is denoted by $\Sigma_{\rho}$. For a sequence of probability measure $(\rho_n)_{n\in\NN}$ that weakly converges to $\rho$, we write $\rho_n\xrightarrow[]{w}\rho$. A probability distribution $\rho\in\cP(\RR^d)$ is called $\beta$-sub-Weibull with parameter $\sigma^2$ for $\sigma\geq0$ if $\int \exp\left(\|x\|^\beta/2\sigma^2\right) d \rho(x)\leq 2$. In particular, $\rho$ is sub-Gaussian if it is 2-sub-Weibull. Notice that $X\sim \rho$ being 4-sub-Weibull is equivalent to $\|X\|^2$ being sub-Gaussian, in which case $\int e^{t\|x\|^2}d\rho(x)\leq 2e^{t^2\sigma^2/2}$. The latter bound is repeatedly used in our derivations. 

Let $C_b(\RR^d)$ be the space of bounded continuous functions on $\RR^d$ equipped with the~$L^\infty$ norm. The Lipschitz seminorm of a function $f:\RR^d\to\RR$ is $\|f\|_{\mathrm{Lip}}\coloneqq \sup_{x\neq x'}\frac{|f(x)-f(x')|}{\|x-x'\|}$. For $p \in [1,\infty)$ and $\rho\in\cP(\RR^d)$, let $L^p(\rho)$ be the space of measurable functions $f$ of $\R^d$ such that $\|f\|_{L^p(\rho)}\coloneqq  (\int_{\R^d}|f|^p  d  \rho)^{1/p} < \infty$. For any multi-index $k=(k_1,\dots,k_d) \in \mathbb{N}_0^d$ with $|k| = \sum_{j=1}^ d k_j$ ($\NN_0 = \NN \cup \{ 0 \}$), define the differential operator $D^k = \frac{\partial^{|k|}}{\partial x_1^{k_1} \cdots \partial x_{d}^{k_d}}$
with $D^0 f = f$. We write $N(\eps, \cF, \mathsf{d})$ for the $\eps$-covering number of a function class $\cF$ w.r.t. a metric $\mathsf{d}$, and $N_{[\ ]}(\eps, \cF, \mathsf{d})$ for the bracketing number. We use $\lesssim_x$ to denote inequalities up to constants that only depend on $x$; the subscript is dropped when the constant is universal. For $a,b \in \R$, let $a \vee b = \max \{ a,b \}$ and $a \land b = \min \{ a,b \}$.

\section{Background and Preliminaries}\label{sec:background}

\subsection{Classical and Entropic Optimal Transport}\label{sec:OT_EOT}

We briefly review basic definitions and results concerning the classical and entropic OT problems, which serve as building blocks for our subsequent analysis of the GW distance. For a detailed exposition the reader is referred to \cite{villani2009optimal,santambrogio15,peyre2019computational}. Let $\cX,\cY$ be two Polish spaces and consider a lower semicontinuous cost function $c:\cX\times\cY\to\RR$, where note that we allow $c$ to take negative value.

\paragraph{\textbf{Classical optimal transport.}}
% \subsubsection{Optimal transport}

The OT problem between $(\mu,\nu)\mspace{-1.5mu}\in\mspace{-1.5mu}\cP(\cX)\mspace{-1.5mu}\times\mspace{-1.5mu}\cP(\cY)$ with cost $c$~is
\begin{equation}
    \mathsf{OT}_c(\mu,\nu) \coloneqq  \inf_{\pi\in\Pi(\mu,\nu)} \int_{\cX\times\cY} c\, d \pi,\label{eq:OT}
\end{equation}
where $\Pi(\mu,\nu)$ is the set of all couplings of $\mu$ and $\nu$, i.e., each $\pi\in\Pi(\mu,\nu)$ is a probability distribution on $\cX\times\cY$ that has $\mu$ and $\nu$ as its first and second marginals, respectively. The special case of the $p$-Wasserstein distance, for $p \in [1,\infty)$, is given by $\mathsf{W}_p(\mu,\nu)\coloneqq \left(\mathsf{OT}_{\|\cdot\|^p}(\mu,\nu)\right)^{1/p}$. $\mathsf{W}_p$ is a metric on $\cP_p(\R^d)$ which metrizes weak convergence plus convergence of $p$-th moments, i.e., $\Wp(\hat{\mu}_n,\mu) \to 0$ if and only if $\hat{\mu}_n \stackrel{w}{\to} \mu$ and $M_p(\hat{\mu}_n) \to M_p(\mu)$.

OT is a linear program and as such it admits strong duality. Suppose that the cost function satisfies $c(x,y)\geq a(x)+b(y)$, for all $(x,y)\in\cX\times\cY$, for some upper semicontinuous functions $(a,b)\in L^1(\mu)\times L^1(\nu)$. Then (cf. \cite[Theorem 5.10]{villani2009optimal}): 
\begin{equation}
    \mathsf{OT}_c(\mu,\nu) = \sup_{(\varphi,\psi)\in\Phi_c} \int_\cX \varphi d \mu + \int_\cY \psi d \nu,\label{eq:OT_dual}
\end{equation}
where $\Phi_c\coloneqq \big\{(\varphi,\psi)\in C_b(\cX)\times C_b(\cY): \varphi(x)+\psi(y)\leq c(x,y),\,\forall (x,y)\in\cX\times\cY\big\}$. Furthermore, defining the $c$- and $\bar{c}$-transform of $\varphi\in C_b(\cX)$ and $\psi\in C_b(\cY)$ as $\varphi^c(y)\coloneqq \inf_{x\in\cX} c(x,y)-\varphi(x)$ and $\psi^{\bar{c}}(x)\coloneqq \inf_{y\in\cY} c(x,y)-\psi(y)$, respectively, the optimization above can be restricted to pairs $(\varphi,\psi)$ such that $\psi=\varphi^c$ and $\varphi=\psi^{\bar{c}}$.

\begin{remark}[Duality for statistical OT]\label{rem:duality_OT}
The dual form in \eqref{eq:OT_dual} is key for the statistical analysis of OT, encompassing empirical convergence rates \cite{dudley1969speed,chizat2020faster,manole2021sharp,hundrieser2022empirical} and limit distribution theorems \cite{Sommerfeld2018,Tameling2019,del2019central,manole2021plugin,hundrieser2022,goldfeld2022statistical}. For instance, if optimal dual potentials in  \eqref{eq:OT_dual} lie, respectively, in functional classes $\cF$ and $\cG$, one can bound the two-sample error as
\[
\EE\big[\big|\mathsf{OT}_c(\mu,\nu)-\mathsf{OT}_c(\hat{\mu}_n,\hat{\nu}_n)\big|\big]\lesssim \EE\bigg[\sup_{\varphi\in\cF_c}(\mu-\hat{\mu}_n)\varphi\bigg]+\EE\bigg[\sup_{\psi\in\cG_c}(\nu-\hat{\nu}_n)\psi\bigg].
\]
This reduces the error analysis to that of the expected suprema of two empirical processes indexed by the classes $\cF_c$ and $\cG_c$. One may then use techniques from empirical process theory \cite{van1996weak} to obtain the desired convergence rate. This requires studying regularity of optimal dual potentials to obtain bounds on the covering numbers of the corresponding function classes. Given bounds of the form $N(\eps,\cF_c,L^\infty)\vee N(\eps,\cG_c,L^\infty)\lesssim \eps^{-k}$, a convergence rate of $n^{-1/k}$ immediately follows by standard chaining arguments and  entropy integral bounds.

Dudley used this argument in \cite{dudley1969speed} to prove an $n^{-1/d}$ rate for one-sample estimation of the 1-Wasserstein distance, relying on the fact that the covering number of the Lipschitz class scales like $\eps^{-1/d}$. More recently, \cite{manole2021sharp} refined this argument to establish a two-sample rate of $n^{-2/d}$, under smoothness and convexity assumptions on the cost, by observing that dual potentials are not only Lipschitz but also convex in that case. This yielded a covering number bound of $\eps^{-d/2}$ and the rate follows. The primal form was used in \cite{dereich2013constructive,fournier2015rate} to derive sharp empirical convergence rates for the $p$-Wasserstein distance via a block approximation argument for the optimal coupling. More recently, however, duality enabled an even finer statistical analysis for deriving limit distribution theorems of empirical OT, e.g., via linearization arguments \cite{del2019central,manole2021plugin} or by proving weak convergence of the underlying empirical processes and invoking the functional delta method \cite{shapiro1990,romisch2004}, as done in \cite{Sommerfeld2018,Tameling2019,hundrieser2022,goldfeld2022statistical}. 
\end{remark}

\paragraph{\textbf{Entropic optimal transport.}}
% \subsubsection{Entropic optimal transport}
EOT is a convexification of the classical OT problem by means of an entropic penalty. For a regularization parameter $\eps>0$, EOT is given by
\begin{equation}
\mathsf{OT}_{c,\eps}(\mu,\nu) \coloneqq  \inf_{\pi\in\Pi(\mu,\nu)} \int c\,  d \pi + \eps \dkl(\pi\|\mu\otimes\nu),\label{eq:EOT}
\end{equation}
where the KL divergence is given by $\dkl(\alpha\|\beta)\coloneqq \int\log ( d  \alpha/ d  \beta) \,  d \alpha$ if $\alpha \ll \beta$, and equals $+\infty$ otherwise. 
The optimization objective in \eqref{eq:EOT} is strongly convex in $\pi$ and thus admits a unique solution $\pi^\star$. The entropic cost $\mathsf{OT}_{c,\eps}$ and the optimal solutions $\pi^\star$ are known to converge towards the classical OT cost \cite{genevay2019sample} and a corresponding optimal plan \cite{carlier2017convergence} as $\eps\to 0$.\footnote{For the plan, convergence happens in the weak topology and possibly along a subsequence.} In particular, Theorem 1 from \cite{genevay2019sample} shows that for smooth costs and compact spaces $\cX,\cY$, the entropic approximation gap is 
$\big|\mathsf{OT}_\eps(\mu,\nu)-\mathsf{OT}(\mu,\nu)|\lesssim \eps\log (1/\eps)$.

EOT satisfies duality and can be rewritten as (cf. \cite{nutz2021entropic}):
\begin{equation}
    \mathsf{OT}_{c,\eps}(\mu,\nu) = \sup_{(\varphi,\psi) \in L^1(\mu)\times  L^1(\nu)} \int \varphi d \mu + \int \psi d \nu - \eps \int e^{\frac{\varphi(x)+\psi(y)-c(x,y)}{\eps}}  d \mu\otimes\nu(x,y)+ \eps.\label{eq:EOT_dual}
\end{equation}
There exist functions $(\varphi,\psi) \in L^1 (\mu) \times L^1(\nu)$ that achieve the supremum in \eqref{eq:EOT_dual}, which we call \textit{EOT potentials}. EOT potentials are almost surely (a.s.) unique up to additive constants in the sense that if $(\tilde{\varphi},\tilde{\psi})$ is another pair of EOT potentials, then there exists a constant $a \in \R$ such that $\tilde{\varphi}= \varphi +a\ $ $\mu$-a.s. and $\tilde{\psi} = \psi-a\ $ $\nu$-a.s. Furthermore, a pair of functions $(\varphi,\psi)\in L^1 (\mu) \times L^1(\nu)$ are EOT potentials if and only if they satisfy the so-called \textit{Schr\"{o}dinger system}
\begin{equation}
\int_{\cX} e^{\frac{\varphi(x)+\psi(\cdot)-c(x,\cdot)}{\eps}}  d \mu(x)= 1 \quad \nu\mbox{-}a.s.\qquad \mbox{and}\qquad \int_{\cY} e^{\frac{\varphi(\cdot)+\psi(y)-c(\cdot,y)}{\eps}}  d \nu(y)= 1 \quad \mu\mbox{-}a.s.\label{EQ:Schrodinger}
\end{equation}
Given EOT potentials $(\varphi,\psi)$, the unique EOT plan can be expressed in their terms as $ d \pi^\star(x,y) = e^{\frac{\varphi(x)+ \psi(y) - c(x,y)}{\eps}}  d  \mu \otimes \nu(x,y)$.

\begin{remark}[Duality for statistical EOT]\label{rem:duality_EOT}
Akin to the utility of duality for the statistical analysis of OT, as discussed in \cref{rem:duality_OT}, the EOT dual served a pivotal role in the development of a statistical theory under entropic regularization. Thanks for the Schr\"{o}dinger system in \eqref{EQ:Schrodinger}, smoothness of the cost $c$ implies the existence of EOT potentials that reside in a H\"older space of arbitrarily large smoothness; cf., e.g., \cite[Lemma 1]{goldfeld2022limit}. This, in turn, enables establishing parametric $n^{-1/2}$ convergence rates for EOT \cite{genevay2019sample,mena2019statistical,rigollet2022sample,groppe2023lower} under quite general conditions and a rich limit distribution theory for the EOT cost, plan, dual potentials, and the barycentric projection \cite{mena2019statistical,bigot2019central,klatt2020empirical,gunsilius2021matching,goldfeld2022limit,delbarrio22EOT,gonzalez2022weak,gonzalez2023weak}. 
\end{remark}

\subsection{Classical and Entropic Gromov-Wasserstein Distance}\label{subsec:GW}

The objects of interest in this work are the GW distance and its entropic version. The $(p,q)$-GW distance quantifies similarity between (complete and separable) mm spaces $(\cX,\sd_\cX,\mu)$ and $(\cY,\sd_\cY,\nu)$ as \cite{Memoli11,sturm2012space}.
\[
    \GW_{p,q}(\mu,\nu)\coloneqq  \inf_{\pi\in\Pi(\mu,\nu)} \big\|\Delta_{q}^{\cX,\cY}\big\|_{L^p(\pi\otimes\pi)},
\]
where $\Delta_q^{\cX,\cY}(x,y,x',y')=\big| \sd_\cX(x,x')^q-\sd_\cY(y,y')^q\big|$. This definition is an $L^p$  relaxation of the Gromov-Hausdorff distance between metric spaces,\footnote{The Gromov-Hausdorff distance between $(\cX,\sd_\cX)$ and $(\cY,\sd_\cY)$ is given by $\frac 12 \inf_{R\in\cR(\cX,\cY)}\|\Delta^{\cX,\cY}_{1,1}\|_{L^\infty(R)}$, where $\cR(\cX,\cY)$ is the collection of all correspondence sets of $\cX$ and $\cY$, i.e., subsets $R\subset\cX\times\cY$ such that the coordinate projection maps
are surjective when restricted to $R$. The correspondence set can be thought of as $\supp(\pi)$ in the GW formulation.} and gives rise to a metric on the collection of all isomorphism classes of mm spaces\footnote{The mm spaces $(\cX,\sd_\cX,\mu)$ and $(\cY,\sd_\cY ,\nu)$ are isomorphic if there is an isometry $f:\cX\to\cY$ with $f_\sharp \mu=\nu$.}  with finite $pq$-size, i.e., $\int \sd_\cX(x,x')^{pq} d \mu\otimes \mu(x,x')<\infty$ and similarly for $\nu$. Like the $p$-Wasserstein distance, Theorem 5.1 in \cite{Memoli11} reveals that $\GW_{p,q}$ captures empirical convergence of mm spaces: if $X_1,\ldots,X_n$ are samples from $\mu\in\cP(\cX)$ and $\hat{\mu}_n\coloneqq n^{-1}\sum_{i=1}^n \delta_{X_i}$ is their empirical measures, then $\GW_{p,q}(\hat{\mu}_n,\mu)\to 0$ a.s. The rate at which this empirical convergence happens is, however, an open problem.

Towards a complete resolution,
one of our main contributions is to quantify the empirical convergence rate of the $(2,2)$-GW distance between Euclidean mm spaces $(\RR^{d_x},\|\mspace{-1mu}\cdot\mspace{-1mu}\|,\mu)$~and $(\RR^{d_y},\|\mspace{-1mu}\cdot\mspace{-1mu}\|,\nu)$ of different dimensions. Abbreviating $\Delta^{\RR^{d_x},\RR^{d_y}}_{2}=\Delta$, the distance of interest~is
\begin{align*}
    \GW(\mu,\nu)& \coloneqq \inf_{\pi\in\Pi(\mu,\nu)} \|\Delta\|_{L^2(\pi\otimes\pi)}\\
    &=\inf_{\pi\in\Pi(\mu,\nu)}\left(\int_{\RR^{d_x}\times\RR^{d_y}}\int_{\RR^{d_x}\times\RR^{d_y}} \big| \|x-x'\|^2-\|y-y'\|^2\big|^2  d \pi\otimes \pi(x,y,x',y')\right)^{\frac 12}.\numberthis\label{eq:GW}
\end{align*}
We drop subscripts from our notation because we focus on the $(2,2)$-GW case from here on out. For finiteness we will always assume $\mu\in\cP_4(\RR^{d_x})$ and $\nu\in\cP_4(\RR^{d_y})$. We also treat the GW distance with entropic regularization, which, for $\eps>0$, is defined as
\begin{align*}
    \EGW_\eps(\mu,\nu)\coloneqq \inf_{\pi\in\Pi(\mu,\nu)} \|\Delta\|^2_{L^2(\pi\otimes\pi)}+ \eps \dkl(\pi \| \mu\otimes \nu).
\end{align*}
 The motivation for EGW stems from its computational tractability \cite{solomon2016entropic,peyre2016gromov,scetbon2022linear,rioux2023entropic}, which makes it a popular approach in practice.\footnote{As discussed in \cref{subsec:lit_review}, the follow-up work \cite{rioux2023entropic} proposed the first algorithms for computing EGW between discrete distributions on $n$ points to arbitrary precision in $O(n^2)$ time, subject to formal convergence guarantees. These algorithms hinge upon the dual formulation developed herein. This progress, along with the statistical theory we provide, poses EGW as a viable tool for statistical estimation and inference.} With the setup above, we have $\EGW_0(\mu,\nu)=\GW(\mu,\nu)^2$ but approximation bounds that account for the gap $\big|\EGW_\eps(\mu,\nu)-\GW(\mu,\nu)^2\big|$ are currently unknown (nor is there a proof of weak convergence for the corresponding optimal couplings). Another major gap in GW and EGW theory is the lack of dual formulations, without which an empirical convergence rate analysis of standard and entropic GW distances remained obscure. In what follows, we close these gaps.

\section{Entropic Gromov-Wasserstein Distance}\label{sec:EGW}

\subsection{Continuity in Regularization Parameter} We study continuity of the EGW cost and optimal coupling in $\eps$. Our first result quantifies the gap between the GW and EGW costs. 

\begin{proposition}[Cost approximation gap]\label{prop:cont_of_cost_in_eps} For any $\eps\in(0,1]$ and $(\mu,\nu)\in\cP_4(\RR^{d_x})\times\cP_4(\RR^{d_y})$,~we~have 
\begin{align*}
    \big|\EGW_\eps(\mu,\nu)-\GW(\mu,\nu)^2\big|\lesssim_{d_x,d_y,M_4(\mu),M_4(\nu)} \eps\log\frac{1}{\eps}.
\end{align*}
\end{proposition}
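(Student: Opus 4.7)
The lower bound $\EGW_\eps(\mu,\nu)\geq \GW(\mu,\nu)^2$ is immediate from the nonnegativity of the KL divergence, so I would focus on the matching upper bound $\EGW_\eps(\mu,\nu)-\GW(\mu,\nu)^2\lesssim \eps\log(1/\eps)$. The plan is to adapt the block-approximation strategy of \cite{genevay2019sample} but, crucially, to replace the compact-support step there by a maximum-entropy argument that only uses finiteness of the fourth moments. Let $\pi^\star\in\Pi(\mu,\nu)$ be a GW-optimal coupling (existence follows from lower semicontinuity of the functional and Prokhorov tightness), and for a bandwidth $\delta>0$ to be optimized, tile $\RR^{d_x}$ and $\RR^{d_y}$ into half-open cubes $\{Q_i^x\}, \{Q_j^y\}$ of side $\delta$. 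Define the candidate $\pi_\delta\in\Pi(\mu,\nu)$ by preserving the block-level masses $m_{ij}\coloneqq \pi^\star(Q_i^x\times Q_j^y)$ but redistributing within each product block as the conditional product $m_{ij}\,\mu_{Q_i^x}\otimes \nu_{Q_j^y}$, where $\mu_{Q_i^x}\coloneqq \mu|_{Q_i^x}/\mu(Q_i^x)$ (and likewise for $\nu$). A direct computation confirms that $\pi_\delta$ has marginals $\mu$ and $\nu$, and that $\pi_\delta$ has an explicit density w.r.t.\ $\mu\otimes\nu$ that is constant on each product block.

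Two estimates then drive the proof. The first is a cost-gap bound: for points lying in corresponding blocks of side $\delta$, a direct computation yields $|\|x-x'\|^2-\|\tilde x-\tilde x'\|^2|\lesssim_{d_x}\delta(\|x\|+\|x'\|+\delta)$ (via $\|x-x'\|^2-\|\tilde x-\tilde x'\|^2=\langle x-\tilde x-x'+\tilde x',\,x+\tilde x-x'-\tilde x'\rangle$), and analogously on the $Y$ side. Applying the identity $a^2-b^2=(a-b)(a+b)$ to $\Delta^2$, followed by Cauchy-Schwarz against $\pi^\star\otimes\pi^\star$ and the fourth-moment bounds on $\mu,\nu$, I expect
\[
\big|\|\Delta\|^2_{L^2(\pi_\delta^{\otimes 2})}-\|\Delta\|^2_{L^2((\pi^\star)^{\otimes 2})}\big|\lesssim_{d_x,d_y,M_4(\mu),M_4(\nu)}\delta.
\]

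The main obstacle, and the step where our hypothesis departs from \cite{genevay2019sample}, is bounding $\dkl(\pi_\delta\|\mu\otimes\nu)$ uniformly in $\delta$ for distributions with unbounded supports. From the explicit form of $\pi_\delta$,
\[
\dkl(\pi_\delta\|\mu\otimes\nu)=\sum_{i,j} m_{ij}\log\frac{m_{ij}}{\mu(Q_i^x)\nu(Q_j^y)}\le H(I)+H(J),
\]
where $H(I), H(J)$ are the Shannon entropies of the block indices of the $\pi^\star$-marginals. To bound each one, I would smooth the block-discretized law of $X\sim\mu$ to a density uniform on each cube, use the standard identity $H(\mathrm{discrete})=h(\mathrm{smoothed density})+d\log(1/\delta)$, and then invoke the maximum differential entropy inequality under a fourth-moment constraint: among densities $f$ on $\RR^d$ with $\int\|x\|^4 f(x)\,dx$ bounded, the maximizer is proportional to $\exp(-\lambda\|x\|^4)$ and has entropy $O_d(\log M_4 + 1)$. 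Since the smoothing inflates $M_4$ only by lower-order terms in $\delta$, this yields
\[
H(I)+H(J)\le (d_x+d_y)\log(1/\delta)+O_{d_x,d_y,M_4(\mu),M_4(\nu)}(1).
\]
Combining the cost and KL estimates produces $\EGW_\eps(\mu,\nu)\le \GW(\mu,\nu)^2+C_1\delta+\eps(d_x+d_y)\log(1/\delta)+C_2\eps$, and choosing $\delta\asymp\eps$ delivers the claimed $\eps\log(1/\eps)$ rate with the constants depending only on $d_x,d_y,M_4(\mu),M_4(\nu)$.
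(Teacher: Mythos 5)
Your proposal reproduces the paper's proof strategy essentially step for step: block approximation of the optimal GW coupling, an $O(\delta)$ estimate on the resulting cost gap via oscillation over blocks and fourth-moment control, reduction of the KL term to discrete block entropies $H(I)+H(J)$, smoothing to a differential-entropy bound of the form $h(\text{smoothed}) + d\log(1/\delta)$, and a maximum-entropy estimate under a moment constraint followed by optimizing $\delta\asymp\eps$. The only cosmetic deviations are that the paper bounds the cost gap by explicitly expanding the quadratic GW polynomial (into terms such as $\int\|x\|^2\|y\|^2\,d(\pi^\ell-\pi_0)$) and controlling each via a block-approximation lemma rather than a generic $a^2-b^2$ factorization, and it invokes the Gaussian maximum-entropy inequality under a matched covariance (then bounds $\log\det\Sigma_{\mu^\ell}$ via the same lemma) rather than the $\exp(-\lambda\|x\|^4)$ maximizer under a fourth-moment constraint; both variants yield the same $O_{d,M_4}(1)$ additive constant.
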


The proof of \cref{prop:cont_of_cost_in_eps}, which is given in \cref{proof:prop:cont_of_cost_in_eps}, relies on a block approximation of optimal GW couplings and the Gaussian maximum entropy inequality. Specifically, we decompose the space into cubes with side length $\ell$ and construct a new coupling $\pi^\ell$ that is piecewise uniform on these cubes  {(the block approximation idea for the EOT coupling originally dates back to \cite{carlier2017convergence})}. The error of the entropic approximation is then quantified in terms of $\ell$, with the KL divergence term being bounded using the differential entropy of the Gaussian distribution with a matched covariance matrix. We then optimize the bound over $\ell$ to arrive at the desired dependence~on~$\eps$.

\begin{remark}[Comparison to EOT approximation results]
A similar bound of order $O\big(\eps\log(1/\eps)\big)$ was derived in Theorem 1 of \cite{genevay2019sample} for the entropic approximation gap of~the OT problem on compact domains with Lipschitz cost. Our proof of \cref{prop:cont_of_cost_in_eps}, which relies on a block approximation of optimal GW couplings, is inspired by their derivation but with several key differences. Specifically, by leveraging the Gaussian maximum entropy inequality, we allow for arbitrary distributions with bounded 4th moments (which is always required for finiteness of $\GW$) and costs that grow at most polynomially. Our proof technique can directly be used to relax the assumptions of \cite[Theorem 1]{genevay2019sample} to match those of \cref{prop:cont_of_cost_in_eps}. Another related entropic approximation result appeared in Theorem 1 of \cite{chizat2020faster}, providing an $O(\eps^2)$ bound on the gap between the squared 2-Wasserstein distance and the Sinkhorn divergence (which is a centered version of EOT). Their derivation utilizes a dynamical formulation of the Sinkhorn divergence \cite{chen2016relation,gentil2017analogy,gigli2020benamou,conforti2021formula}, which allows tying it to the Benamou-Brenier formula for $\mathsf{W}_2^2$~\cite{benamou2000}. No dynamical form for the GW distance is currently~known.
\end{remark}

\cref{prop:cont_of_cost_in_eps} guarantees the convergence of the EGW cost towards that of GW, as
$\eps\to0$. It is natural to ask whether the optimal couplings that achieve these costs converge as well?
We answer this question to the affirmative.

\begin{proposition}[Convergence of plans]\label{prop:convergence-of-plans}
Fix $(\mu,\nu)\in \cP_4(\RR^{d_x})\times\cP_4(\RR^{d_y})$ and let $(\eps_k)_{k \in \N}$ be a sequence with $\eps_k\searrow \eps \geq 0$. For each $k \in \N$, let $\pi_k \in \Pi(\mu,\nu)$ be an optimal coupling for $\EGW_{\eps_k}(\mu,\nu)$. Then there exists $\pi \in \Pi(\mu,\nu)$ such that $\pi_k \xrightarrow[]{w} \pi$ as $k \to \infty$ along a subsequence, and $\pi$ is optimal for $\EGW_\eps(\mu,\nu)$. 
\end{proposition}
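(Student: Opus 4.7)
The plan is a standard stability / $\Gamma$-convergence argument. Abbreviate $F_\eps(\pi) := \|\Delta\|^2_{L^2(\pi\otimes\pi)} + \eps\,\dkl(\pi\|\mu\otimes\nu)$, so that $\EGW_\eps(\mu,\nu) = \inf_{\pi\in\Pi(\mu,\nu)}F_\eps(\pi)$ and $F_{\eps_k}(\pi_k)=\EGW_{\eps_k}(\mu,\nu)$. The three ingredients I will need are precompactness of the coupling set, joint lower semicontinuity of $F_\eps$ in $(\eps,\pi)$, and a limsup bound obtained from optimality of the $\pi_k$.

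First, tightness of $\mu$ and $\nu$ implies that $\Pi(\mu,\nu)$ is tight in $\cP(\RR^{d_x}\times\RR^{d_y})$, so by Prokhorov's theorem a subsequence $\pi_{k_j}$ converges weakly to some $\pi$; passing the marginal projections to the limit gives $\pi\in\Pi(\mu,\nu)$. Next I would show $F_\eps(\pi)\le\liminf_j F_{\eps_{k_j}}(\pi_{k_j})$. Since $\pi_{k_j}\otimes\pi_{k_j}\xrightarrow[]{w}\pi\otimes\pi$ and $\Delta^2\ge 0$ is continuous, truncating to the bounded continuous functions $\Delta^2\wedge M$ (so that portmanteau applies directly) and letting $M\uparrow\infty$ via monotone convergence yields
\[
\|\Delta\|^2_{L^2(\pi\otimes\pi)}\le\liminf_j\|\Delta\|^2_{L^2(\pi_{k_j}\otimes\pi_{k_j})}.
\]
The KL term is handled by the classical joint lower semicontinuity of $(\alpha,\beta)\mapsto\dkl(\alpha\|\beta)$ under weak convergence (with $\mu\otimes\nu$ fixed), combined with $\eps_{k_j}\to\eps\ge 0$ and $\dkl\ge 0$, which together give $\liminf_j\eps_{k_j}\dkl(\pi_{k_j}\|\mu\otimes\nu)\ge\eps\,\dkl(\pi\|\mu\otimes\nu)$.

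For the upper bound I would fix any $\sigma\in\Pi(\mu,\nu)$ with $\dkl(\sigma\|\mu\otimes\nu)<\infty$ and exploit optimality of each $\pi_k$: $F_{\eps_{k_j}}(\pi_{k_j})\le F_{\eps_{k_j}}(\sigma)\to F_\eps(\sigma)$ as $j\to\infty$, so $F_\eps(\pi)\le F_\eps(\sigma)$ for every finite-KL competitor. When $\eps>0$ this already establishes optimality, since any $\sigma$ with $F_\eps(\sigma)<\infty$ has finite KL (and competitors with infinite KL trivially satisfy $F_\eps(\sigma)=\infty$). When $\eps=0$ the limsup step does not extend directly to infinite-KL competitors, so at this point I would instead invoke \cref{prop:cont_of_cost_in_eps} to get $\EGW_{\eps_{k_j}}(\mu,\nu)\to\GW(\mu,\nu)^2$; combined with the liminf step this gives $F_0(\pi)\le\GW(\mu,\nu)^2$, and the reverse inequality follows from $\pi\in\Pi(\mu,\nu)$, so $\pi$ is GW-optimal.

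The main obstacle is the liminf inequality for the quadratic cost, since $\Delta^2$ is unbounded and weak convergence of $\pi_{k_j}\otimes\pi_{k_j}$ does not by itself give convergence of the integrals. This is why the truncation trick above is needed, and it relies essentially on $\Delta^2\ge 0$ being continuous. Everything else, namely tightness, joint lsc of KL, and passing to the limit in $F_{\eps_k}(\sigma)$ for a fixed finite-KL competitor, is routine and does not require the duality machinery developed later in the paper.
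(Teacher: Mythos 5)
Your proof is correct, and it follows the same stability idea as the paper but assembled more directly. The paper casts the argument as explicit $\Gamma$-convergence and invokes a standard cluster-point optimality lemma (\cite{dal2012introduction}), building a recovery sequence via the block approximation from the proof of \cref{prop:cont_of_cost_in_eps}. You instead carry out the Prokhorov compactness step by hand, prove the liminf inequality via truncation and monotone convergence --- a cleaner justification than the paper's brief assertion that $\|\Delta\|^2_{L^2(\,\cdot\,\otimes\,\cdot\,)}$ is weakly continuous on $\Pi(\mu,\nu)$, and all that is needed here is lower semicontinuity --- and replace the recovery-sequence construction altogether. For $\eps>0$, your comparison against a fixed finite-KL competitor $\sigma$ closes the argument on its own, without reference to \cref{prop:cont_of_cost_in_eps}. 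For $\eps=0$, you invoke \cref{prop:cont_of_cost_in_eps} to identify $\lim_j\EGW_{\eps_{k_j}}(\mu,\nu)=\GW(\mu,\nu)^2$; this is an honest shortcut (not circular, since \cref{prop:cont_of_cost_in_eps} is proved independently earlier), whereas the paper re-derives the block approximation inside this proof rather than citing the already-established consequence. The two proofs rest on the same underlying ingredients; yours is shorter and more elementary at the top level, while the paper's $\Gamma$-convergence framing is more modular and would generalize more readily to other regularizers.
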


The proof of \cref{prop:convergence-of-plans}, which is given in \cref{proof:prop:convergence-of-plans}, relies on establishing $\Gamma$-convergence of the EGW functional with $\eps$. Having that, convergence of optimal couplings follows by a tightness argument. In particular, this result implies that a sequence of optimal couplings for $\EGW_\eps(\mu,\nu)$ converges, up to extracting a subsequence, to an optimal coupling for the regular $(2,2)$-GW distance as $\eps\to 0$.

\subsection{Duality}

We next derive a dual formulation for the EGW distance. This duality serves as the key component for our sample complexity analysis of empirical EGW in the next subsection. Towards the dual form, first observe that $\EGW_\eps$ is invariant to isometric operations on the marginal spaces, such as translation and orthonormal rotation. Thus, without loss of generality (w.l.o.g.), we assume that $\mu$ and $\nu$ are centered, i.e., $\int x d  \mu(x)=\int y d  \nu(y)=0$. 

\medskip
Next, by expanding the $(2,2)$-GW cost, we split the EGW functional into two terms as 
\begin{equation}
\EGW_\eps(\mu,\nu) = \EGW^1(\mu,\nu) + \EGW^2_\eps(\mu,\nu),\label{eq:egw_decomposition}
\end{equation}
where
\begin{align*}
    &\EGW^1\mspace{-1mu}(\mu,\nu)\mspace{-3mu}\coloneqq \mspace{-3mu} \int\mspace{-3mu}\|x\mspace{-3mu}-\mspace{-3mu}x'\|^4 d \mu\otimes\mu(x,x') \mspace{-3mu}+ \mspace{-3mu}\int\mspace{-3mu} \|y\mspace{-3mu}-\mspace{-3mu}y'\|^4 d \nu\otimes\nu(y,y') \mspace{-3mu}-4\mspace{-5mu}\int \mspace{-3mu}\|x\|^2\|y\|^2 d \mu\otimes\nu(x,y)\\
    &\EGW^2_{\eps}\mspace{-1mu}(\mu,\nu) \mspace{-3mu}\coloneqq\mspace{-3mu}  \inf_{\pi\in\Pi(\mu,\nu)}\mspace{-3mu}\int\mspace{-3mu}- 4\|x\|^2\|y\|^2  d \pi(x,y) \mspace{-3mu}-\mspace{-3mu}8\sum_{\substack{1\leq i\leq d_1\\1\leq j \leq d_2}}\mspace{-5mu}\Big(\mspace{-3mu}\int\mspace{-3mu}  x_iy_j d \pi(x,y)\Big)^2 \mspace{-3mu}+ \eps \dkl(\pi \|\mu\otimes\nu).
\end{align*}
See \eqref{eq:proof_of_EGW_decomposition} for the derivation. Evidently, the first term depends only on the marginals $\mu,\nu$, while the second captures the dependence on the coupling $\pi$. The following theorem establishes duality for $\mathsf{S}^2_\eps(\mu,\nu)$, which, in turn, yields a dual form for $\EGW_\eps(\mu,\nu)$ via the above decomposition.

\begin{theorem}[Entropic GW duality]\label{thm:egw_duality}
Fix $\varepsilon>0$, let $(\mu,\nu)\in\cP_4(\RR^{d_x})\times\cP_4(\RR^{d_y})$, and define $M_{\mu,\nu}:=\sqrt{M_2(\mu)M_2(\nu)}$. We have
    \begin{equation}
    \EGW^2_{\eps}(\mu,\nu)=\inf_{\mathbf{A}\in\RR^{d_x\times d_y}}32\|\mathbf{A}\|_\F^2+\mathsf{OT}_{\mathbf{A},\eps}(\mu,\nu),\label{eq:EGW_dual}
     \end{equation}
    where $\mathsf{OT}_{\mathbf{A},\eps}$ is the EOT problem with cost function $c_{\mathbf{A}}\mspace{-3mu}:\mspace{-3mu}(x,y)\in\RR^{d_x}\times \RR^{d_y}\mapsto\mspace{-3mu}-4\|x\|^2\|y\|^2-32x^{\intercal}\mathbf{A}y$. Moreover, the infimum is achieved at some $\mathbf{A}^{\star}\mspace{-3mu}\in\mspace{-3mu}\cD_{M_{\mu,\nu}}\mspace{-3mu}\coloneqq\mspace{-3mu}[-M_{\mu,\mspace{-3mu}\nu}/2,M_{\mu,\nu}/2]^{d_x\times d_y}$.
\end{theorem}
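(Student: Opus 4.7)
The plan is to linearize the quadratic (in $\pi$) contribution to $\EGW^2_\eps$ by introducing an auxiliary matrix variable $\mathbf{A}\in\RR^{d_x\times d_y}$ and then interchange the infima over $\pi$ and $\mathbf{A}$. The key identity is the elementary minimization $-8t^2=\inf_{a\in\RR}(32a^2-32at)$, with the minimum attained at $a=t/2$. Applying this coordinatewise to $t=m_{ij}(\pi):=\int x_iy_j\,d\pi$, I would rewrite
\[
-8\sum_{i,j}m_{ij}(\pi)^2 \,=\, \inf_{\mathbf{A}\in\RR^{d_x\times d_y}}\Big(32\|\mathbf{A}\|_\F^2 \,-\, 32\int x^\intercal \mathbf{A}y \, d\pi\Big),
\]
which converts the quadratic dependence on $\pi$ into an infimum over $\mathbf{A}$ of an expression that is \emph{linear} in $\pi$.

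Substituting into the definition of $\EGW^2_\eps$ turns the inner bracket into $32\|\mathbf{A}\|_\F^2+\int c_\mathbf{A}\,d\pi+\eps\,\dkl(\pi\|\mu\otimes\nu)$ with $c_\mathbf{A}$ as in the statement, and since swapping two infima is always valid, $\inf_\pi\inf_\mathbf{A}=\inf_\mathbf{A}\inf_\pi$. Recognizing the inner problem as EOT with cost $c_\mathbf{A}$ then yields \eqref{eq:EGW_dual} directly. If one prefers to avoid the swap, I would prove the two inequalities separately: the ``$\le$'' direction uses $-8t^2\le 32a^2-32at$ for every $a,t\in\RR$, integrated against any admissible $\pi$, followed by $\inf_\pi$ and $\inf_\mathbf{A}$; the ``$\ge$'' direction picks, for each $\pi$, the matrix $\mathbf{A}(\pi)$ with entries $m_{ij}(\pi)/2$, which saturates the pointwise inequality.

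Confining the outer minimization to the hypercube $\cD_{M_{\mu,\nu}}$ requires coercivity plus a first-order condition. The Frobenius term grows quadratically in $\mathbf{A}$, while $|\mathsf{OT}_{\mathbf{A},\eps}(\mu,\nu)|$ grows at most linearly in $\|\mathbf{A}\|_\F$ since $\big|\int x^\intercal \mathbf{A}y\, d\pi\big|\le\|\mathbf{A}\|_\F M_{\mu,\nu}$ by Cauchy-Schwarz under the marginal constraint (and $\int\|x\|^2\|y\|^2\,d\pi$ is uniformly bounded by the $4$-th moment assumption). Coupled with continuity of $\mathbf{A}\mapsto\mathsf{OT}_{\mathbf{A},\eps}(\mu,\nu)$, this yields attainment at some $\mathbf{A}^\star$. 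The envelope theorem, applied to EOT (whose optimizer $\pi^\star_\mathbf{A}$ is unique and for which $c_\mathbf{A}$ is affine in $\mathbf{A}$), identifies the gradient of the objective as $64 A_{ij}-32\,m_{ij}(\pi^\star_\mathbf{A})$; setting it to zero at $\mathbf{A}^\star$ gives $A^\star_{ij}=m_{ij}(\pi^\star_{\mathbf{A}^\star})/2$, and a second Cauchy-Schwarz yields $|m_{ij}|\le\sqrt{M_2(\mu)M_2(\nu)}=M_{\mu,\nu}$, placing $\mathbf{A}^\star$ in $\cD_{M_{\mu,\nu}}$.

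The main technical obstacle I anticipate is making the envelope-theorem step fully rigorous, since $c_\mathbf{A}$ grows like $\|x\|^2\|y\|^2$ (plus a linear-in-$\mathbf{A}$ contribution) on the noncompact supports of $\mu,\nu$, so naive differentiation under the integral is not immediate. To dominate $\partial_\mathbf{A} c_\mathbf{A}=-32\, xy^\intercal$ uniformly in a neighborhood of $\mathbf{A}^\star$, I would use the Schr\"odinger-system representation \eqref{EQ:Schrodinger} of the optimal EOT density, which provides the exponential-type tail control needed for dominated convergence. With this in hand, all remaining ingredients (the linearization identities, coercivity, and continuity of $\mathbf{A}\mapsto\mathsf{OT}_{\mathbf{A},\eps}(\mu,\nu)$) are elementary.
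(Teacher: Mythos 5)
Your central argument --- linearizing the quadratic-in-$\pi$ term via $-8t^2=\inf_a(32a^2-32at)$ applied coordinatewise to $t=\int x_iy_j\,d\pi$, swapping the resulting infima over $\pi$ and $\mathbf{A}$, and recognizing the inner problem as $\mathsf{OT}_{\mathbf{A},\eps}$ with cost $c_\mathbf{A}$ --- is exactly the paper's proof, and it is correct. The one genuine divergence is in how you confine the outer minimizer to $\cD_{M_{\mu,\nu}}$. The paper inserts the restriction $|a_{ij}|\le M_{\mu,\nu}/2$ \emph{before} swapping the infima: for each fixed $\pi$ the unconstrained pointwise optimizer $a^\star_{ij}=\tfrac{1}{2}\int x_iy_j\,d\pi$ already lies in that interval by Cauchy--Schwarz under the marginal constraint, so the restriction never binds and leaves the value unchanged; after the swap, attainment follows from lower semicontinuity on the compact cube $\cD_{M_{\mu,\nu}}$. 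You instead prove \eqref{eq:EGW_dual} first over all of $\RR^{d_x\times d_y}$, argue coercivity to obtain a minimizer $\mathbf{A}^\star$, and then localize it via an envelope-theorem first-order condition $\mathbf{A}^\star=\tfrac{1}{2}\int xy^\intercal\,d\pi^\star_{\mathbf{A}^\star}$. This works, but --- as you anticipate --- it requires establishing differentiability of $\mathbf{A}\mapsto\mathsf{OT}_{\mathbf{A},\eps}(\mu,\nu)$ with an unbounded cost, together with dominated-convergence control via the Schr\"odinger system, before the stationarity condition can be invoked. The paper's ordering of steps reaches the same conclusion while sidestepping that machinery entirely, so it is the more economical route; your variant is equally valid but spends substantially more effort on the final localization step.
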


 {The variational representation above relates the EGW to the well understood problem of EOT. This enables leveraging knowledge on the latter to make progress in the study of EGW. In particular, this representation unlocks the sample complexity analysis in the next subsection, which relies on inserting the EOT dual from \eqref{eq:EOT_dual} into the above. Since \eqref{eq:EGW_dual} allows utilizing EOT duality for the EGW analysis, we synonymously refer to it as the EGW dual (even though it is somewhat of a misnomer, since strictly speaking, \eqref{eq:EGW_dual} is not a dual problem for $\EGW_{\eps}(\mu,\nu)$ in the standard optimization theory sense).}

The proof of \cref{thm:egw_duality} is given in \cref{proof:thm:egw_duality}. The key idea in deriving the above representation is to introduce the additional dual variable $\mathbf{A}$ as a means to linearize the quadratic (in fact, concave) in $\pi$ term of $\EGW^2_{\eps}(\mu,\nu)$. The resulting objective comprises two infima, over $\mathbf{A}\in\cD_{M_{\mu,\nu}}$ and $\pi\in\Pi(\mu,\nu)$, which we may interchange. Upon doing so, we identify the inner optimization as the primal EOT problem (up to a minus sign) with the cost $c_{\mathbf{A}}$. Existence of an optimal $\mathbf{A}$ follow from continuity of the functional and compactness of the optimization domain. As the optimum is always achieved inside $\cF_{M_{\mu,\nu}}$, we may restrict the optimization domain to $\mathbf{A}\in\cD_M$, for any $M\geq M_{\mu,\nu}$, without changing the value. 
The flexibility of choosing $M$ an optimizing over the compact set $\cD_M$ is crucial for our sample complexity analysis.

\subsection{Sample Complexity}

The dual formulation from \cref{thm:egw_duality} enables deriving, for the first time, the sample complexity of empirical EGW distances. Let $X_1,\ldots,X_n$ and $Y_1,\ldots,Y_n$ be independently and identically distributed (i.i.d.) samples from $\mu$ and $\nu$, respectively, and denote their empirical measures by $\hat{\mu}_n = n^{-1}\sum_{i=1}^n \delta_{X_i}$ and $\hat{\nu}_n = n^{-1}\sum_{i=1}^n \delta_{Y_i}$. We study one- and two-sample empirical convergence, i.e., the rate at which $\EGW_\eps(\hat{\mu}_n,\nu)$ and $\EGW_\eps(\hat{\mu}_n,\hat{\nu}_n)$ approach  $\EGW_\eps(\mu,\nu)$, under a sub-Weibull condition on the population distributions. 

\begin{theorem}[Entropic GW sample complexity]\label{thm:egw_sample_complex}
Fix $\eps>0$ and let $(\mu,\nu)\in\cP(\RR^{d_x})\times\cP(\RR^{d_y})$ be a pair of 4-sub-Weibull distributions with parameter $\sigma^2>0$. We have
\begin{align*}
    \EE\big[\big|\EGW_\eps(\mu,\nu)-\EGW_\eps(\hat{\mu}_n,\nu)\big|\big]&\lesssim_{d_x,d_y} \frac{1+\sigma^4}{\sqrt{n}}+\eps\Bigg(1+\bigg(\frac{\sigma}{\sqrt{\eps}}\bigg)^{9\left\lceil \frac{d_x}{2}\right\rceil+11}\Bigg)\frac{1}{\sqrt{n}}\\    
    \EE\big[\big|\EGW_\eps(\mu,\nu)-\EGW_\eps(\hat{\mu}_n,\hat{\nu}_n)\big|\big]&\lesssim_{d_x,d_y}\frac{1+\sigma^4}{\sqrt{n}}+\eps\Bigg(1+\bigg(\frac{\sigma}{\sqrt{\eps}}\bigg)^{9\left\lceil \frac{d_x\vee d_y}{2}\right\rceil+11}\Bigg)\frac{1}{\sqrt{n}}.
\end{align*}
\end{theorem}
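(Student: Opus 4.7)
The plan is to combine the decomposition $\EGW_\eps=\EGW^1+\EGW^2_\eps$ from \eqref{eq:egw_decomposition} with the dual form from \cref{thm:egw_duality}. The first term $\EGW^1$ depends only on the marginals through polynomial functionals (fourth moments of $\mu,\nu$ and mixed second moments of $\mu\otimes\nu$), so empirical averages converge at the parametric $n^{-1/2}$ rate, and the $4$-sub-Weibull tail on $\mu,\nu$ provides the $(1+\sigma^4)$ prefactor via standard moment/concentration bounds for polynomial statistics of independent variables. This handles the first additive term on the right-hand side in both the one- and two-sample cases.

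For $\EGW^2_\eps$, I would pick a deterministic radius $M\asymp\sigma^2$ so that $\cD_M$ contains the optimizing $\mathbf{A}$ for both $(\mu,\nu)$ and $(\hat\mu_n,\hat\nu_n)$ with probability at least $1-e^{-cn}$ (the complementary event is handled by a crude $L^2$ bound together with sub-Weibull tail decay, contributing only to the $n^{-1/2}$ remainder). Then a standard ``infimum stability'' argument gives, on the good event,
\begin{equation*}
\big|\EGW^2_\eps(\mu,\nu)-\EGW^2_\eps(\hat\mu_n,\nu)\big|\leq \sup_{\mathbf{A}\in\cD_M}\big|\mathsf{OT}_{c_\mathbf{A},\eps}(\mu,\nu)-\mathsf{OT}_{c_\mathbf{A},\eps}(\hat\mu_n,\nu)\big|,
\end{equation*}
with an analogous inequality for the two-sample case.

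Next, I would insert the EOT dual \eqref{eq:EOT_dual} for each fixed $\mathbf{A}$. The cost $c_\mathbf{A}(x,y)=-4\|x\|^2\|y\|^2-32x^\intercal\mathbf{A}y$ is a polynomial of degree four in $(x,y)$, so the Schrödinger system \eqref{EQ:Schrodinger} bootstraps in the usual way (as in Lemma~1 of \cite{goldfeld2022limit}) to show that optimal EOT potentials $(\varphi_\mathbf{A},\psi_\mathbf{A})$ are $C^\infty$, with derivatives of polynomial growth controlled by $\sigma,\eps$, and Hölder norms on balls of radius $\asymp\sigma\sqrt{\log n}$ bounded explicitly in terms of $\sigma,\eps$. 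The crucial step is to make this regularity bound \emph{uniform} in $\mathbf{A}\in\cD_M$: this requires tracking the $\mathbf{A}$-dependent constants through each step of the bootstrap, which works because $\mathbf{A}$ enters $c_\mathbf{A}$ only through the linear term $-32x^\intercal\mathbf{A}y$ whose derivatives are controlled by $M\asymp\sigma^2$. This yields a function class $\cF_\eps\subset C^s(\R^{d_x})$ (and $\cG_\eps\subset C^s(\R^{d_y})$) containing $\{\varphi_\mathbf{A}:\mathbf{A}\in\cD_M\}$ (resp.\ $\{\psi_\mathbf{A}\}$), whose Hölder norm of order $s=\lceil d_x/2\rceil+1$ (resp.\ $\lceil (d_x\vee d_y)/2\rceil+1$) scales as a polynomial in $\sigma/\sqrt{\eps}$ of explicit degree.

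Finally, weak duality yields $\mathsf{OT}_{c_\mathbf{A},\eps}(\mu,\nu)-\mathsf{OT}_{c_\mathbf{A},\eps}(\hat\mu_n,\nu)\leq(\mu-\hat\mu_n)\varphi_\mathbf{A}+\eps\cdot\mathrm{(residual)}$, and the reverse bound holds by swapping $\mu,\hat\mu_n$; taking suprema over $\mathbf{A}$ reduces the problem to
\begin{equation*}
\E\Big[\sup_{\varphi\in\cF_\eps}\big|(\mu-\hat\mu_n)\varphi\big|\Big]+\text{(analogous $\psi$ term in the two-sample case)}.
\end{equation*}
Because $\cF_\eps$ is a Hölder class with sub-Weibull envelope, standard chaining/bracketing bounds (as in \cite{mena2019statistical,goldfeld2022limit}) give an $n^{-1/2}$ rate times the Hölder norm of $\cF_\eps$, which produces the $\eps(1+(\sigma/\sqrt{\eps})^{9\lceil d_x/2\rceil+11})/\sqrt{n}$ term; the exponent $9\lceil d_x/2\rceil+11$ is an explicit bookkeeping outcome of how the iterated Schrödinger bootstrap amplifies the $\sigma/\sqrt{\eps}$ factor at smoothness order $s=\lceil d_x/2\rceil+1$. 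The main obstacle is obtaining the uniform-in-$\mathbf{A}$ Hölder regularity with an explicit and tight dependence on $\sigma,\eps$; once this is in hand, the empirical-process step is largely routine.
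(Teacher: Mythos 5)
Your overall architecture — decompose $\EGW_\eps=\EGW^1+\EGW^2_\eps$, treat $\EGW^1$ by plug-in moment estimation, reduce $\EGW^2_\eps$ via the duality of \cref{thm:egw_duality} to a supremum over $\mathbf{A}$ of EOT errors, bootstrap Schr\"odinger regularity of the potentials uniformly in $\mathbf{A}$, and close with chaining over a H\"older class of smoothness $\lceil d_x/2\rceil+1$ — is the same skeleton the paper uses, and your identification of the key technical obstacle (uniform-in-$\mathbf{A}$ regularity with explicit $\sigma/\sqrt\eps$ dependence) is correct.

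There is, however, one genuine gap. The decomposition $\EGW_\eps(\rho,\tau)=\EGW^1(\rho,\tau)+\EGW^2_\eps(\rho,\tau)$ is derived by expanding the quadratic cost and \emph{using that $\rho,\tau$ are centered} to cancel the cross terms $\int\big(\langle x,x'\rangle\|y\|^2+\|x\|^2\langle y,y'\rangle\big)d\pi\otimes\pi$ (see \eqref{eq:proof_of_EGW_decomposition}). You may assume w.l.o.g.\ that $\mu,\nu$ are centered, but $\hat\mu_n,\hat\nu_n$ are generically \emph{not} centered, so writing $\EGW_\eps(\hat\mu_n,\hat\nu_n)=\EGW^1(\hat\mu_n,\hat\nu_n)+\EGW^2_\eps(\hat\mu_n,\hat\nu_n)$ is simply false. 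The paper addresses this by first replacing $\hat\mu_n,\hat\nu_n$ by their sample-mean-centered versions $\tilde\mu_n,\tilde\nu_n$ (which preserves $\EGW_\eps$), and then quantifying the bias $\big|\EGW^i(\hat\mu_n,\hat\nu_n)-\EGW^i(\tilde\mu_n,\tilde\nu_n)\big|$ for $i\in\{1,2\}$ in \cref{lemma:bias}. This is not cosmetic: the bias terms involve fourth moments of the sample means $\bar x_n,\bar y_n$ and a nontrivial supremum over couplings, and produce an additional $\sigma^2/\sqrt n$ contribution that must be absorbed into the final bound. Your proposal silently omits this step, so as written the split into $\EGW^1$ and $\EGW^2_\eps$ portions does not hold for the empirical quantity being estimated.

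A second, smaller deviation is your choice of a \emph{deterministic} radius $M\asymp\sigma^2$ together with an $e^{-cn}$ bad-event argument. The paper instead takes a data-dependent $M=\sqrt2\,\tilde\sigma$, where $\tilde\sigma$ is the smallest parameter making $\mu,\nu,\hat\mu_n,\hat\nu_n$ all $4$-sub-Weibull; this makes $\cD_M$ feasible almost surely and shifts the randomness into a prefactor $(1+\tilde\sigma^{5s})$, which is then handled by Cauchy–Schwarz against moments of $\tilde\sigma$. Your deterministic route is plausible — $M_2(\hat\mu_n)$ concentrates exponentially around $M_2(\mu)$ under $4$-sub-Weibull tails — but it buys nothing and requires a careful bad-event bound that the paper sidesteps. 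Neither this difference nor your alternative handling of unboundedness (restriction to balls of radius $\asymp\sigma\sqrt{\log n}$ versus the paper's dyadic-shell partition feeding into \cite[Corollary 2.7.4]{van1996weak}) is a gap per se, but the missing centering-bias analysis is.
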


\cref{thm:egw_sample_complex} is derived in \cref{proof:thm:egw_sample_complex}. Here, we provide a proof outline, and~explain how the duality from \cref{thm:egw_duality} facilitates the derivation. The proof follows three main steps:
\begin{enumerate}[leftmargin=*]
    \item \underline{Decomposition:} We first split the empirical estimation error of $\EGW_\eps$ to that of its components $\EGW^1$ and $\EGW^2_\eps$. Notice that the decomposition is not straightforward since $\EGW_\eps=\EGW^1+\EGW^2_\eps$ holds only for centered measures, and while we may assume this w.l.o.g. on the populations $(\mu,\nu)$, centering need not hold for the empirical measures. Thus, to perform the split we first center $(\hat{\mu}_n,\hat{\nu}_n)$ by their sample means, and then further account for the bias induced by this centering step, which is shown to be at most $\sigma^2/\sqrt{n}$. Altogether, we obtain 
    \[
\EE\big[\big|\EGW_\eps(\mu,\nu)\mspace{-3mu}-\mspace{-3mu}\EGW_\eps(\hat{\mu}_n,\hat{\nu}_n)\big|\big]\leq \EE\big[\big|\EGW^1(\mu,\nu)\mspace{-3mu}-\mspace{-3mu}\EGW^1(\hat{\mu}_n,\hat{\nu}_n)\big|\big] \mspace{-2mu}+\mspace{-3mu} \EE\big[\big|\EGW^2_{\eps} (\mu,\nu)\mspace{-3mu}-\mspace{-3mu}\EGW^2_{\eps} (\hat{\mu}_n,\hat{\nu}_n)\big|\big]\mspace{-3mu}+\mspace{-3mu}\frac{\sigma^2}{\sqrt{n}}, 
    \]
    and may analyze each component separately.
    
    \vspace{2mm}
    \item \underline{$\EGW^1$ analysis:} The first term on the right-hand side (RHS) above is simple to analyze, as estimation of $\EGW^1$ boils down to estimating moments of $(\mu,\nu)$. Since the sub-Weibull condition implies finite moments, we establish an $O(1/\sqrt{n})$ bound on the $\EGW^1$ estimation error. 

    \vspace{2mm}
    \item \underline{$\EGW^2_\eps$ analysis:} The treatment of the $\EGW^2_{\eps}$ is more involved and hinges on the dual representation from \cref{thm:egw_duality}. Specifically, using our dual with any $M\geq M_{\mu,\nu}$, we obtain
    \[
    \big|\EGW^2_{\eps}(\mu,\nu)-\EGW^2_{\eps}(\hat{\mu}_n,\hat{\nu}_n)\big| \leq \sup_{\mathbf{A}\in \cD_{M}} \big|\mathsf{OT}_{\mathbf{A},\eps}(\mu,\nu)-\mathsf{OT}_{\mathbf{A},\eps}(\hat{\mu}_n,\hat{\nu}_n)\big|,
    \]
    where the RHS can be controlled by the suprema of empirical processes indexed by optimal entropic potentials. As the potentials depend on the cost $c_\mathbf{A}$, we analyze regularity of optimal $(\varphi,\psi)$ pairs by bounding these functions and their partial derivative of arbitrary order, uniformly in $\mathbf{A}\in\cD_M$. Given the derivative bounds, a chaining argument and entropy integral bounds yield the second term on the RHS above as a bound on the empirical convergence rate for $\EGW^2_{\eps} $. The overall rate we obtain is parametric, and hence optimal, although the dependence of the bound on $\sigma$ and $\eps$ could possibly be improved.
\end{enumerate}

\begin{remark}[Dependence on dimension]\label{rem:EGW_min_dimension}
The empirical convergence rate of EGW given in \cref{thm:egw_sample_complex} is parametric, and hence cannot be improved. The dependence of the constant in the two-sample bound on the maximal dimension, however, can be relaxed. The follow-up work \cite{groppe2023lower}, which was posted on arXiv several months after our original submission and arXiv upload, observed that the dependence on dimension can be improved from $d_x\vee d_y$ to $d_x\wedge d_y$, for compactly supported populations. That work studied the LCA principle from \cite{hundrieser2022empirical} in the context of EOT. Relying on our duality theory, Theorem 5.4 of \cite{groppe2023lower} showed that, when $\mu,\nu$ are compactly supported, an empirical convergence rate with $d_x\wedge d_y$ instead of $d_x\vee d_y$ in the constant holds true.\footnote{More precisely, \cite{groppe2023lower} improves the exponent of the $\eps^{-1/2}$ term in the two-sample rate bound from \cref{thm:egw_sample_complex} to $d_x\wedge d_y$, but their overall bound still contains an implicit constant that depends on the maximal dimension $d_x\vee d_y$.} This result does not cover the full scope of \cref{thm:egw_sample_complex}, which treats unboundedly supported distributions with 4-sub-Weibull tails. The LCA principle, in its current form, does not seem to be compatible with unbounded supports, since it inherently relies on covering the class of EOT potentials in the $L^\infty$ norm. 
\end{remark}

\begin{remark}[Comparison to EOT]\label{rem:EGW_EOT_comparison}
The EGW empirical convergence rates from \cref{thm:egw_sample_complex} are similar to the corresponding rates for the EOT problem, which are also parametric. Specifically, the $n^{-1/2}$ rate was established in \cite{genevay2019sample} for  EOT between compactly supported distributions and assuming that the cost is $\cC^\infty$ and Lipschitz, although their bound contained an undesirable exponential dependence on $1/\eps$. This result was extended to sub-Gaussian distributions and quadratic cost in \cite{mena2019statistical}, while shaving off the said exponential factor and arriving to a bound that is similar to ours. More recently, \cite{groppe2023lower} observed that the LCA principle holds for EOT, showing that the constant in front of the $n^{-1/2}$ term adapts to the smaller intrinsic dimension of the two measures. 

Our approach for proving \cref{thm:egw_sample_complex} is inspired by \cite{mena2019statistical}, but requires overcoming several new challenges. First, a strong duality theory, which is at the core of the proof technique, was not available until now for the EGW distance. Second, our analysis goes through the decomposition \eqref{eq:egw_decomposition}, which needs the distributions to be centered. While we may assume this w.l.o.g. on $\mu,\nu$, the empirical measures are generally non-centered, which necessitates a bias analysis of the EGW functional due to centering, as discussed above. Lastly, as the dual form in \eqref{eq:EGW_dual} involves optimization over $\mathbf{A}\in\cD_M$, with $M\geq M_{\mu,\nu}$, our regularity analysis of EGW potentials must hold uniformly in $\mathbf{A}$, so as to allow the reduction to empirical processes. 
\end{remark}

\section{Gromov-Wasserstein Distance}\label{sec:GW}

\subsection{Duality and Sample Complexity}

We now consider the unregularized $(2,2)$-GW distance from~\eqref{eq:GW}, establish duality, derive its sample complexity, and study its one-dimensional structure. Let $(\mu,\nu)\in\cP_4(\RR^{d_x})\times\cP_4(\RR^{d_y})$ be centered w.l.o.g. and note that, similarly to the EGW case, the $(2,2)$-GW distance decomposes as
\[
\GW(\mu,\nu)^2=\EGW^1(\mu,\nu)+\EGW^{2}(\mu,\nu),
\]
where $\EGW^2\coloneqq \EGW^{2}_0$, with $\EGW^1$ and $\EGW^2_{\eps} $ as given after \eqref{eq:egw_decomposition}. To obtain a dual form for $\EGW^2$, an inspection of the proof of \cref{thm:egw_duality} reveals that the same argument holds also for $\eps = 0$ (i.e., any $\eps\geq 0$ is allowed in that statement), up to replacing the EOT problem $\mathsf{OT}_{\mathbf{A},\eps}$ in \eqref{eq:EGW_dual} with the standard (unregularized) OT problem $\mathsf{OT}_{\mathbf{A}}\coloneqq \mathsf{OT}_{\mathbf{A},0}$. Recalling the definitions of $M_{\mu,\nu}$, $\cD_{M_{\mu,\nu}}$, and $c_\mathbf{A}$ from \cref{thm:egw_duality}, we have the following corollary.

 {
\begin{corollary}[GW duality]\label{cor:gw_duality}
For any $(\mu,\nu)\in\cP_4(\RR^{d_x})\times\cP_4(\RR^{d_y})$, we have 
    \begin{equation}    \EGW^2(\mu,\nu)=\inf_{\mathbf{A}\in\RR^{d_x\times d_y}}32\|\mathbf{A}\|_\F^2+\mathsf{OT}_{\mathbf{A}}(\mu,\nu),\label{eq:GW_dual}
     \end{equation}
    where $\mathsf{OT}_{\mspace{-1mu}\mathbf{A}}$ is the OT problem with cost $c_{\mathbf{A}}$ and the infimum is achieved at some $\mathbf{A}^{\mspace{-2mu}\star}\mspace{-1mu}\in\mspace{-1mu}\cD_{M_{\mu,\nu}}$.
\end{corollary}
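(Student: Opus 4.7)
The plan is to mirror the proof of \cref{thm:egw_duality} with $\eps=0$ throughout, exploiting the fact that the entropic penalty plays no role in the linearization step that underpins the duality. Indeed, since $\EGW^2=\EGW^2_0$, the corollary is nothing other than the $\eps=0$ instance of the variational identity \eqref{eq:EGW_dual}, with $\mathsf{OT}_{\mathbf{A},\eps}$ replaced by $\mathsf{OT}_{\mathbf{A}}$. The proposal therefore reduces to verifying that every step in the derivation of \cref{thm:egw_duality} survives when the KL term is dropped.

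First I would start from the identity
\[
\EGW^2(\mu,\nu)=\inf_{\pi\in\Pi(\mu,\nu)}\bigg\{\int\!\! -4\|x\|^2\|y\|^2\,d\pi-8\sum_{i,j}\Big(\int x_iy_j\,d\pi\Big)^{\!2}\bigg\}
\]
and apply the elementary convex-duality formula $-8a^2=\inf_{b\in\RR}\{32b^2-32ab\}$ (with minimizer $b^\star=a/2$) coordinate-wise to $a_{ij}=\int x_iy_j\,d\pi$. This linearizes the concave-in-$\pi$ quadratic term at the price of introducing an auxiliary matrix $\mathbf{A}\in\RR^{d_x\times d_y}$, yielding
\[
\EGW^2(\mu,\nu)=\inf_{\pi\in\Pi(\mu,\nu)}\inf_{\mathbf{A}\in\RR^{d_x\times d_y}}\Big\{32\|\mathbf{A}\|_\F^2+\int c_{\mathbf{A}}\,d\pi\Big\}.
\]
Since these are joint infima, I may freely swap their order. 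For fixed $\mathbf{A}$ the inner problem is precisely $\mathsf{OT}_{\mathbf{A}}(\mu,\nu)$; it is finite and well-defined because the 4-moment assumption together with Cauchy-Schwarz gives $|c_{\mathbf{A}}(x,y)|\lesssim_{\mathbf{A}}(1+\|x\|^2)(1+\|y\|^2)\in L^1(\mu\otimes\nu)$. This establishes \eqref{eq:GW_dual}.

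For the claim that the infimum is attained in $\cD_{M_{\mu,\nu}}$, I would use the first-order condition of the inner $\mathbf{A}$-problem for a given $\pi$, which reads $\mathbf{A}^\star_{ij}(\pi)=\tfrac12\int x_iy_j\,d\pi$. Cauchy-Schwarz then bounds $|\mathbf{A}^\star_{ij}(\pi)|\le\tfrac12\sqrt{M_2(\mu)M_2(\nu)}=M_{\mu,\nu}/2$, so the outer infimum is unchanged upon restriction to the compact hypercube $\cD_{M_{\mu,\nu}}$. On $\cD_{M_{\mu,\nu}}\times\Pi(\mu,\nu)$ the objective is jointly lower semi-continuous: continuity in $\mathbf{A}$ is immediate, and weak continuity in $\pi$ follows from the uniform integrability of $\|x\|^2\|y\|^2$ and $\|x\|\|y\|$ on $\Pi(\mu,\nu)$ afforded by $\mu\in\cP_4(\RR^{d_x})$ and $\nu\in\cP_4(\RR^{d_y})$. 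The feasible set is jointly compact (compactness of $\cD_{M_{\mu,\nu}}$ together with weak compactness of $\Pi(\mu,\nu)$ via marginal tightness), so the direct method of the calculus of variations delivers a joint minimizer, whose $\mathbf{A}$-component is the desired $\mathbf{A}^\star$.

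The only real point of difference compared to the proof of \cref{thm:egw_duality} is that existence must be obtained without the strong convexity supplied by the KL penalty; I expect this to be the mildly finicky point rather than a genuine obstacle, since the compactness and semicontinuity above make the direct method applicable. All other ingredients — the linearization identity, the swap of infima, the identification of the inner problem, and the a priori bound on $\mathbf{A}^\star$ — depend only on the marginals and the structure of the cost, and are therefore insensitive to whether $\eps$ is positive or zero.
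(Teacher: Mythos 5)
Your proposal is correct and follows essentially the same route as the paper: it observes that the proof of Theorem~\ref{thm:egw_duality} is insensitive to the value of $\eps\geq 0$, re-runs the linearization $-8a^2=\inf_{b}(32b^2-32ab)$, swaps the two infima, identifies the inner problem as $\mathsf{OT}_{\mathbf{A}}$, and uses the Cauchy--Schwarz bound on the first-order condition to confine $\mathbf{A}^\star$ to $\cD_{M_{\mu,\nu}}$. The paper dispatches the corollary by simply noting that an inspection of the proof of Theorem~\ref{thm:egw_duality} shows every step holds for $\eps=0$; your write-up makes the existence-of-minimizer step (compactness of $\cD_{M_{\mu,\nu}}$ and of $\Pi(\mu,\nu)$, joint lower semicontinuity) explicit, but this is the same argument invoked implicitly there.
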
}

Given this dual form for $\GW(\mu,\nu)^2$ we proceed with a sample complexity analysis. We focus on compactly supported distributions and refer the reader to \cref{rem:ubdd_domains} ahead for a discussion on extensions to unbounded domains.  {The following theorem gives a sharp characterization of the one- and two-sample empirical convergence rate of the quadratic GW distance, providing matching upper and lower rate bounds.

\begin{theorem}[GW sample complexity]\label{thm:GW_sample_complex}
Let $(\mu,\nu)\in\cP(\cX)\times\cP(\cY)$, where $\cX\subset \RR^{d_x}$ and $\cY\subset\RR^{d_y}$ are compact, and let $R=\mathrm{diam}(\cX)\vee\mathrm{diam}(\cY)$. We have
\begin{align*}
\EE\big[\big|\GW(\mu,\nu)^2-\GW(\hat{\mu}_n,\nu)^2\big|\big]&\lesssim_{d_x,d_y} \frac{R^4}{\sqrt{n}}+\big(1+R^4\big)n^{-\frac{2}{d_x\vee 4}}(
\log n)^{\mathds{1}_{\{d_x=4\}}}\\    
\EE\big[\big|\GW(\mu,\nu)^2-\GW(\hat{\mu}_n,\hat{\nu}_n)^2\big|\big]&\lesssim_{d_x,d_y}\frac{R^4}{\sqrt{n}}+\big(1+R^4\big)n^{-\frac{2}{(d_x\wedge d_y) \vee 4}}(
\log n)^{\mathds{1}_{\{d_x\wedge d_y=4\}}},    
\end{align*}
and if $\mu,\nu$ are separated in the $(2,2)$-GW distance, i.e., $\GW(\mu,\nu)>0$, then the same rates hold for estimating $\GW$ itself, without the square.

Furthermore, the above rates are sharp in the sense that for any $n$ large enough, we have
\begin{align*}
\sup_{(\mu,\nu)\in\cP(\cX)\times\cP(\cY)}\EE\big[\big|\GW(\mu,\nu)^2-\GW(\hat{\mu}_n,\nu)^2\big|\big]&\gtrsim_{\,d_x,d_y,R}\, n^{-\frac{2}{d_x\vee 4}}\\    
\sup_{(\mu,\nu)\in\cP(\cX)\times\cP(\cY)}\EE\big[\big|\GW(\mu,\nu)^2-\GW(\hat{\mu}_n,\hat{\nu}_n)^2\big|\big]&\gtrsim_{\,d_x,d_y,R}\, n^{-\frac{2}{(d_x\wedge d_y)\vee 4}}. 
\end{align*}
\end{theorem}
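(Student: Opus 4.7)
The plan is to prove the upper and lower bounds separately. For the upper bound, I would follow the blueprint of \cref{thm:egw_sample_complex}, using the unregularized GW dual from \cref{cor:gw_duality} in place of the entropic version, while exploiting the extra concavity of the cost that becomes available without entropic smoothing. For the lower bound, I would construct explicit hard instances and derive a new inequality between $\GW$ and the 2-Wasserstein procrustes distance that transfers classical empirical Wasserstein lower bounds to the GW~setting.

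For the upper bound, I would start from the decomposition $\GW(\mu,\nu)^2=\EGW^1(\mu,\nu)+\EGW^2(\mu,\nu)$. The $\EGW^1$ piece is a polynomial in moments of order $\le 4$ of the marginals and, for supports of diameter $R$, is estimated at the parametric rate $R^4/\sqrt{n}$. For $\EGW^2$, \cref{cor:gw_duality} with any $M\ge M_{\mu,\nu}$ gives
\[
\big|\EGW^2(\mu,\nu)-\EGW^2(\hat\mu_n,\hat\nu_n)\big|\;\le\;\sup_{\mathbf{A}\in\cD_M}\big|\mathsf{OT}_{\mathbf{A}}(\mu,\nu)-\mathsf{OT}_{\mathbf{A}}(\hat\mu_n,\hat\nu_n)\big|,
\]
and via the OT dual \eqref{eq:OT_dual} this is bounded by suprema of two empirical processes indexed by optimal OT potentials $(\varphi_\mathbf{A},\psi_\mathbf{A})$ for the cost $c_\mathbf{A}(x,y)=-4\|x\|^2\|y\|^2-32x^\intercal\mathbf{A} y$, taken over $\mathbf{A}\in\cD_M$. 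The crucial regularity observation is that $c_\mathbf{A}(\cdot,y)$ has Hessian $-8\|y\|^2 I\preceq 0$, so the $\bar c$-transform $\psi^{\bar c}(x)=\inf_y\{c_\mathbf{A}(x,y)-\psi(y)\}$, being a pointwise infimum of concave functions, is itself concave; smoothness of $c_\mathbf{A}$ on $\cX\times\cY$ also yields a Lipschitz bound uniform in $\mathbf{A}\in\cD_{R^2}$. Bronshtein's classical metric entropy estimate for Lipschitz concave functions on a compact convex subset of $\RR^d$, $\log N(\eps,\cdot,L^\infty)\lesssim\eps^{-d/2}$, together with a standard chaining/bracketing entropy integral, then yields the rate $n^{-2/(d\vee 4)}$ with a logarithmic correction exactly at $d=4$. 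This gives the one-sample bound immediately with $d=d_x$. For the two-sample case, the improvement from $d_x\vee d_y$ to $d_x\wedge d_y$ comes from the LCA principle of \cite{hundrieser2022empirical,groppe2023lower}: the $c$-transform sends potentials on the higher-dimensional side to potentials with the same regularity on the lower-dimensional side, so the empirical process complexity is governed by the smaller dimension. Finally, the claim for $\GW$ itself (not squared) when $\GW(\mu,\nu)>0$ follows from $|a-b|\le|a^2-b^2|/\GW(\mu,\nu)$ with $a=\GW(\hat\mu_n,\hat\nu_n)$ and $b=\GW(\mu,\nu)$.

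For the lower bound, I would first establish a new inequality of the form $\GW(\mu,\nu)^2\gtrsim P_2^2(\mu,\nu)$ (with constants depending on second moments), where $P_2(\mu,\nu)\coloneqq\inf_{Q}\mathsf{W}_2(Q_\sharp\tilde\mu,\nu)$ is the 2-Wasserstein procrustes distance of \cite{grave2019unsupervised} taken over orthogonal $Q$ after zero-padding the lower-dimensional marginal into the higher-dimensional ambient space. The inequality would be proved by starting from a $\GW$-optimal coupling $\pi$, forming the cross-covariance $\mathbf{B}=\int xy^\intercal d\pi(x,y)$, extracting its SVD to obtain a canonical orthogonal $Q^\star$, and using this rotation to lower bound the distortion integrand $\big|\|x-x'\|^2-\|y-y'\|^2\big|^2$ in terms of $\|Q^\star x-y\|^2+\|Q^\star x'-y'\|^2$-type quantities. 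I would then take $\mu,\nu$ to be copies of the uniform law on $[0,1]^{d_x\wedge d_y}$ embedded trivially into $\RR^{d_x}$ and $\RR^{d_y}$, so that $\GW(\mu,\nu)=0$ by mm-space isomorphism, and deduce the lower bound from $\EE[P_2^2(\hat\mu_n,\hat\nu_n)]\gtrsim n^{-2/((d_x\wedge d_y)\vee 4)}$. The latter follows from classical empirical Wasserstein lower bounds (of Ajtai--Koml\'os--Tusn\'ady / Dudley type for $d\ge 5$, and CLT-type parametric $n^{-1/2}$ fluctuations for $d\le 4$), together with the observation that on a rigid set like the cube $[0,1]^d$ the optimal orthogonal $Q$ cannot reduce $\mathsf{W}_2$ much below $\mathsf{W}_2(\hat\mu_n,\hat\nu_n)$. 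The one-sample version is analogous, holding $\nu$ fixed and varying~$\hat\mu_n$.

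The hardest step will be establishing the $\GW$-versus-Procrustes inequality with sharp enough constants to transfer the rate without losing exponents; the SVD/cross-covariance reduction must be carried out carefully so that the implicit constants depend only on second moments and not on the ambient dimensions $d_x,d_y$. On the upper-bound side, the main technical care is in ensuring that the concavity/Lipschitz regularity and the covering number estimates hold \emph{uniformly} in $\mathbf{A}\in\cD_M$, and in verifying that the LCA principle applies to our parametrized family of smooth, marginally-concave costs $c_\mathbf{A}$ rather than to a single fixed cost.
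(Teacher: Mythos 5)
Your \textbf{upper bound} follows the paper's route essentially verbatim: decompose $\GW^2=\EGW^1+\EGW^2$, handle $\EGW^1$ by moment estimation, use \cref{cor:gw_duality} to reduce $\EGW^2$ to $\sup_{\bA\in\cD_M}|\mathsf{OT}_\bA(\mu,\nu)-\mathsf{OT}_\bA(\hat\mu_n,\hat\nu_n)|$, note that marginal concavity of $c_\bA$ forces $c$-transforms to be concave and Lipschitz uniformly in $\bA$, invoke the Bronshtein/Guntuboyina entropy bound $\log N(\xi,\cdot,L^\infty)\lesssim \xi^{-d/2}$, chain, and use the LCA principle for the two-sample dimension improvement. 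The paper does exactly this, with the one extra technical care (its \cref{lemma:covering}) of covering the union $\cup_\bA\cF^c_\bA$ at the cost of a $\log$-size covering of $\cD_{R^2}$ before applying LCA; your remark about needing uniformity in $\bA$ is on point and this is how the paper handles it.

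The \textbf{lower bound} has a genuine gap. You correctly identify that a $\GW\gtrsim\W_2$-procrustes inequality (the paper's \cref{lemma:equivalence_gw_w}) is the right tool, but your test instance does not work. You take $\mu=\nu=\mathrm{Unif}([0,1]^{d})$ and assert that on a ``rigid'' cube the optimal orthogonal $Q$ cannot reduce $\mathsf{W}_2(\hat\mu_n,Q_\sharp\hat\nu_n)$ much below $\mathsf{W}_2(\hat\mu_n,\hat\nu_n)$. This is exactly the hard step and it is not established. For $\|Q-I\|_\op\gtrsim n^{-1/d}$ the coarse triangle-inequality argument gives a distance $\gtrsim\|Q-I\|_\op$, but for $Q$ in the $n^{-1/d}$-neighborhood of the identity one must rule out a near-perfect re-alignment of the $n$ empirical atoms; there is no rotational symmetry of the cube to exploit, the relevant set of $Q$'s is a $d(d-1)/2$-dimensional manifold, and and a priori the infimum over it could collapse the empirical distance by a polynomial factor. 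The paper avoids this entirely by taking $\mu=\mathrm{Unif}(B_d(0,1))$ and $\nu=\mathrm{Unif}(B_d(0,2))$: the \emph{rotational invariance} of the ball makes $\mathbf{U}_\sharp\mu=\mu$ and so $\inf_{\mathbf{U}\in O(d)}\W_1(\hat\mu_n,\mathbf{U}_\sharp\mu)=\W_1(\hat\mu_n,\mu)$ after the expectation is taken. Getting the expectation inside the infimum is itself nontrivial and is handled by a separate sub-Gaussian process/McDiarmid $\eps$-net argument over $O(d)$ (\cref{lemma:technical_lemma}), which your proposal does not anticipate. Finally, the paper uses the dilated pair (not $\mu=\nu$) so that a completing-the-square identity expresses $\GW(\hat\mu_n,\nu)^2$ in terms of $\GW(\hat\mu_n,\mu)^2$ plus $O(1/n)$ terms, yielding a clean sign in the error; your $\mu=\nu$ choice would also remove the absolute value, but that is a cosmetic difference compared to the missing rigidity argument, which is where your proof would actually fail.
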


\begin{remark}[Chronology of results]\label{rem:GW_min_dimension}
The originally submitted version of this work included only upper bounds on the one- and two-sample empirical convergence rates of $\GW$, where the dependence on dimension was through the maximum $d_x\vee d_y$, as opposed to the minimum as above. The follow-up work \cite{groppe2023lower}, which appeared online two months after our paper was uploaded to arXiv and submitted to the journal, studied the LCA principle in the context of the EOT problem. Remark 5.6 of that work, observed that the LCA principle applies to our original \cref{thm:GW_sample_complex} and commented that the dependence on dimension can be improved the $d_x\wedge d_y$. A full proof of that claim was not provided in \cite{groppe2023lower}, only an high-level outline of the argument. Herein, in \cref{sec:GW_sample_complex_proof}, we provide a full derivation of the upper bounds with the dependence on the smaller dimension. In addition, we establish a novel lower bound that demonstrates that these empirical convergence rates are sharp.
\end{remark}

\cref{thm:GW_sample_complex} is proven in \cref{sec:GW_sample_complex_proof}. The upper bounds leverage the duality from \cref{cor:gw_duality} to reduce the empirical estimation analysis of $\GW^2$ to that of the OT problem with cost $c_\mathbf{A}$. The OT estimation error is then bounded by the suprema of empirical processes indexed by dual OT potentials. To control the corresponding entropy integrals, we exploit smoothness of our cost as well as Lipschitzness and convexity of optimal potentials as $c$-transforms of each other. The fact that the two-sample convergence rate adapts to the smaller dimension is a consequence of the LCA principle \cite[Lemma 2.1]{hundrieser2022empirical}, whereby the $L^\infty$ covering number of a function class $\cF$ is no less than that of its $c$-transform $\cF^c$. This observation enables adapting the bound to the class of dual potentials over the lower-dimensional space. Still, when the estimated measure(s) are high-dimensional, both the one- and two-sample rates for the GW distance suffer from the curse of dimensionality. This is expected in the absence of entropic regularization and is in line with empirical convergence rates for OT; see \cref{rem:ubdd_domains} ahead for further discussion on the comparison between the empirical rates for GW and OT. 

To prove the lower bound, we present a reduction from GW distance estimation to that of the 2-Wasserstein procrustes $\inf_{\mathbf{U}\in E(d)} \mathsf{W}_2(\mu,\mathbf{U}_\sharp\nu)$, where $E(d)$ is the isometry group on $\RR^d$ \cite{grave2019unsupervised} (see also \cite{schonemann1966generalized,goodall1991procrustes}). This relies on the following lemma, which may be of independent interest. We state two-sided bounds, but only the lower bound is used in the derivation.

\begin{lemma}[GW vs. W-procrustes]\label{lemma:equivalence_gw_w}
For any $p,q\in[1,\infty)$ and $\mu,\nu\in\cP_{pq}(\RR^d)$, we have
\[
    \GW_{p,q}(\mu,\nu) \leq q^p\, 2^{pq+p-1+1/q}  \big(M_{pq}(\mu)+M_{pq}(\nu) \big)^{\frac{q-1}{pq}} \mathsf{W}_{pq}(\mu,\nu).
\] 
Furthermore, for $p=q=2$, if $\mu$ and $\nu$ have covariance matrices $\bsigma_\mu$ and $\bsigma_\nu$ with full rank and smallest eigenvalues $\lambda_{\mathrm{min}}(\bsigma_\mu)$ and $\lambda_{\mathrm{min}}(\bsigma_\nu)$, respectively, then
\[
\Big(  32\big(\lambda_{\mathrm{min}}(\bsigma_\mu)^2+\lambda_{\mathrm{min}}(\bsigma_\nu)^2\big)\Big)^{\frac 14}\inf_{\mathbf{U}\in E(d)} \mathsf{W}_2(\mu,\mathbf{U}_\sharp\nu) \leq  \GW(\mu,\nu).
\]
If $\mu$ and $\nu$ are also centered, then it suffices to optimize only over the orthogonal group $O(d)$.
\end{lemma}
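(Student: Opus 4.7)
For the upper bound (first inequality, arbitrary $p,q\ge 1$), the plan is pointwise estimation followed by H\"older. I would start from the scalar bound $|a^q - b^q| \le q(a+b)^{q-1}|a-b|$ for $a,b\ge 0$, applied with $a=\|x-x'\|$ and $b=\|y-y'\|$, and control the difference via the reverse triangle inequality $|\,\|x-x'\| - \|y-y'\|\,|\le \|x-y\|+\|x'-y'\|$. Raising to the $p$-th power yields
\[
\big|\|x-x'\|^q - \|y-y'\|^q\big|^p \le q^p(\|x-x'\|+\|y-y'\|)^{p(q-1)}(\|x-y\|+\|x'-y'\|)^p.
\]
Integrating against $\pi\otimes\pi$ and applying H\"older with conjugate exponents $q/(q-1)$ and $q$ splits the two factors. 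One is bounded, via $(a+b)^{pq}\le 2^{pq-1}(a^{pq}+b^{pq})$, by a constant multiple of $M_{pq}(\mu)+M_{pq}(\nu)$. The other, evaluated at the $\mathsf{W}_{pq}$-optimal $\pi$, gives $2^{pq}\mathsf{W}_{pq}(\mu,\nu)^{pq}$. Collecting constants and taking the $p$-th root delivers the stated bound.

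For the lower bound ($p=q=2$, full-rank covariances), I would first invoke isometric invariance of $\GW$ in each marginal to reduce to the centered case; this same invariance explains why in the centered setting the procrustes infimum may be restricted from $E(d)$ to $O(d)$. Next, using $g(z)=\|x\|^2-\|y\|^2$ and $h(z,z')=\langle x,x'\rangle-\langle y,y'\rangle$, the squared integrand expands as $(\|x-x'\|^2-\|y-y'\|^2)^2=\big(g(z)+g(z')-2h(z,z')\big)^2$. Integrating against $\pi\otimes\pi$ for a $\GW$-optimal $\pi^\star$ and using centeredness to kill the cross terms $\int g(z)\,h(z,z')\,d\pi^\star d\pi^\star=0$, one obtains
\[
\GW(\mu,\nu)^2 = 2\!\int g^2\,d\pi^\star + 2\bigl(M_2(\mu)-M_2(\nu)\bigr)^2 + 4\bigl[\|\bsigma_\mu\|_\F^2+\|\bsigma_\nu\|_\F^2 - 2\|\mathbf{C}_{\pi^\star}\|_\F^2\bigr],
\]
where $\mathbf{C}_\pi := \int xy^\intercal\,d\pi$, with all three summands non-negative.

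Independently, a nuclear-norm representation of the procrustes cost complements the Frobenius-norm expression above. Interchanging the infima over $\mathbf{U}\in O(d)$ and $\pi\in\Pi(\mu,\nu)$ and using the orthogonal Procrustes identity $\sup_{\mathbf{U}\in O(d)}\tr(\mathbf{U}\mathbf{C}^\intercal)=\|\mathbf{C}\|_*$, one gets
\[
\inf_{\mathbf{U}\in O(d)}\mathsf{W}_2(\mu,\mathbf{U}_\sharp\nu)^2 = M_2(\mu)+M_2(\nu) - 2\sup_{\pi\in\Pi(\mu,\nu)}\|\mathbf{C}_\pi\|_*,
\]
so $\|\mathbf{C}_{\pi^\star}\|_*$ is upper bounded by $\bigl(M_2(\mu)+M_2(\nu)-\inf_{\mathbf{U}}\mathsf{W}_2(\mu,\mathbf{U}_\sharp\nu)^2\bigr)/2$. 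The closing step is a matrix inequality that converts the Frobenius-norm gap in the $\GW$ expansion into the nuclear-norm gap in the procrustes expression, with coefficient $\sqrt{2\bigl(\lambda_{\min}(\bsigma_\mu)^2+\lambda_{\min}(\bsigma_\nu)^2\bigr)}$. This would combine the PSD constraint $\mathbf{C}_{\pi^\star}\bsigma_\nu^{-1}\mathbf{C}_{\pi^\star}^\intercal\preceq\bsigma_\mu$ (Schur complement of the joint covariance of $\pi^\star$), the eigenvalue lower bound $\|\bsigma\|_\F^2\ge\lambda_{\min}(\bsigma)\,\tr(\bsigma)$ applied to each marginal, and the norm comparison $\|\mathbf{C}\|_\F^2\le\|\mathbf{C}\|_{\op}\|\mathbf{C}\|_*$; squaring then produces the $1/4$-exponent constant $\big(32(\lambda_{\min}(\bsigma_\mu)^2+\lambda_{\min}(\bsigma_\nu)^2)\big)^{1/4}$.

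The main obstacle is this last combinatorial step: producing the precise constant $32(\lambda_{\min}(\bsigma_\mu)^2+\lambda_{\min}(\bsigma_\nu)^2)$ rather than a weaker one. A crude reduction that retains only the third summand in the $\GW$ expansion (i.e., bounding $\GW$ by twice the inner-product GW) is already too lossy, as one checks on Gaussian marginals; the other two non-negative summands, as well as the PSD block-covariance constraint on $\mathbf{C}_{\pi^\star}$, must be exploited in tandem so that $\lambda_{\min}$ (and not $\lambda_{\max}$ or a trace) emerges. The extremal case is plausibly that of simultaneously diagonalizable Gaussian marginals, which would be the source of tightness for the constant.
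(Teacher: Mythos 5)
Your upper-bound argument follows the paper's scheme (pointwise control of $|a^q-b^q|$, then H\"older with exponents $q/(q-1)$ and $q$), with one substitution: in the mean-value step you bound the intermediate point by $\zeta^{q-1}\le(a+b)^{q-1}$, where the paper uses $\zeta^{q-1}\le a^{q-1}+b^{q-1}$. Both are valid, but carrying through the powers of $2$ your version yields $q^p\,2^{2pq-p-1+1/q}$ rather than the stated $q^p\,2^{pq+p-1+1/q}$; these coincide at $q=2$, but for $q>2$ your constant is strictly larger, so as written you would establish a weaker inequality than the one stated for general $q$. If you keep $a^{q-1}+b^{q-1}$ in that step you recover the displayed constant.

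Your lower-bound route is a genuinely different plan from the paper's, and you have correctly identified that it does not close. Having decomposed $\GW(\mu,\nu)^2$ (your decomposition agrees with the paper's), you pass to the nuclear-norm form of the Procrustes cost $M_2(\mu)+M_2(\nu)-2\sup_\pi\|\mathbf{C}_\pi\|_{*}$ and then need a matrix inequality turning the Frobenius gap $\|\bsigma_\mu\|_\F^2+\|\bsigma_\nu\|_\F^2-2\|\mathbf{C}_{\pi^\star}\|_\F^2$ into the trace/nuclear gap. The tools you list do not obviously produce $\lambda_{\mathrm{min}}$: the block-covariance Schur complement yields only $\|\mathbf{C}_{\pi^\star}\|_{\op}\le\sqrt{\lambda_{\mathrm{max}}(\bsigma_\mu)\lambda_{\mathrm{max}}(\bsigma_\nu)}$, so $\|\mathbf{C}\|_\F^2\le\|\mathbf{C}\|_{\op}\|\mathbf{C}\|_{*}$ inserts $\lambda_{\mathrm{max}}$'s, and the covariance-side inequality $\|\bsigma\|_\F^2\ge\lambda_{\mathrm{min}}(\bsigma)\tr(\bsigma)$ cannot by itself flip the sign on the cross-covariance term. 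The paper avoids this entirely via the SVD $\bgamma_{\pi^\star}=\bP\blambda\bQ^\intercal$ of the cross-covariance of the \emph{GW-optimal} plan: rotating the marginals by $\bP^\intercal,\bQ^\intercal$ leaves $\GW$ and all the Frobenius terms invariant, and then one works with the diagonal entries $a_i,b_i$ of the rotated covariances, which satisfy $a_i\ge\lambda_{\mathrm{min}}(\bsigma_\mu)$, $b_i\ge\lambda_{\mathrm{min}}(\bsigma_\nu)$ by the Rayleigh principle. In that basis $\mathsf{W}_2(\bP^\intercal_\sharp\mu,\bQ^\intercal_\sharp\nu)^2\le\sum_i(a_i+b_i-2\sigma_i(\bgamma_{\pi^\star}))$ directly (plug the rotated GW-optimal plan into the transport cost; no $\sup_\pi$ or Procrustes formula needed), and the factorization $a_i^2+b_i^2-2\sigma_i^2 = 2\bigl(\sqrt{(a_i^2+b_i^2)/2}-\sigma_i\bigr)\bigl(\sqrt{(a_i^2+b_i^2)/2}+\sigma_i\bigr)$ together with $\sigma_i\le(a_i+b_i)/2\le\sqrt{(a_i^2+b_i^2)/2}$ converts the Frobenius gap into a linear one with conversion factor $\min_i\sqrt{(a_i^2+b_i^2)/2}\ge\sqrt{\bigl(\lambda_{\mathrm{min}}(\bsigma_\mu)^2+\lambda_{\mathrm{min}}(\bsigma_\nu)^2\bigr)/2}$ --- that is exactly where $\lambda_{\mathrm{min}}$ enters. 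Note also that your diagnosis that "retaining only the third summand is already too lossy" is incorrect: the paper's argument drops the first two non-negative summands entirely. The missing ingredient is not the extra summands; it is the rotation into the SVD basis of $\bgamma_{\pi^\star}$, which lets one compare to the Procrustes objective without ever comparing Frobenius to nuclear norms.
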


The lemma enables showing that the empirical GW rate, when the population measures are uniform over the unit ball and its scaled version, is at least as large as that of the Wasserstein procrustes. We then develop a new lower bound on the convergence rate of the latter, showing that it is at least $n^{-1/d}$. This, in turn, gives rise to the rates from \cref{thm:GW_sample_complex}. 
}

\begin{remark}[Suboptimal $(p,q)$-GW rates from \cref{lemma:equivalence_gw_w}]\label{rem:suboptimal_rates}
    Fix any $(p,q)$ and $\mu\in\cP_{pq}(\RR^d)$. The upper bound from \cref{lemma:equivalence_gw_w}, directly yields $\EE[\GW_{p,q}(\hat\mu_n,\mu)]\lesssim\EE[\W_{pq}(\hat\mu_n,\mu)]$ $\lesssim n^{-1/d_x}$. Via the triangle inequality we can further obtain an $n^{-1/(d_x\vee d_y)}$ two-sample rate bound for the $(p,q)$-GW distance. However, as seen from the lower bounds in \cref{thm:GW_sample_complex}, this rate is suboptimal and does adapt to the lower of the two dimensions.
\end{remark}

\begin{remark}[Comparison to OT and unbounded domains]\label{rem:ubdd_domains}
The rates in \cref{thm:GW_sample_complex} are inline with those for the classical OT problem with H\"older smooth costs \cite{manole2021sharp} (although our analysis is different from theirs). Over compact domains, this smoothness of the cost enables establishing global Lipschitzness and convexity of OT potentials, which, in turn, leads to the quadratic improvement from the standard $n^{-1/d}$ empirical convergence rate to $n^{-2/d}$, when $d>4$. Evidently, a similar phenomenon happens in the GW case. Unbounded domains are treated in Theorem 13 of \cite{manole2021sharp}, but this result relies on restrictive assumptions on the population distributions and the cost. Namely, the distributions must satisfy certain high-level concentration and anti-concentration conditions, while the cost must be locally H\"older smooth and be lower and upper bounded by a polynomial of appropriate degree. Our cost $c_\mathbf{A}$ does not immediately adhere to these assumptions. While we believe that the argument can be adapted, we leave this extension as a question for future work. 
\end{remark}

\subsection{One-Dimensional Case Study}\label{subsec:GW_1d}
We leverage our duality theory to shed new light on the one-dimensional GW distance. The solution to the GW problem between distributions on $\RR$ is currently unknown and remains one of the most basic open questions in that space. While the standard $p$-Wasserstein distance between distributions on $\RR$ is given by the $L^p([0,1])$ distance between their quantile functions,\footnote{For $p=1$, the formula further simplifies to the $L^1(\RR)$ distance between the cumulative distribution functions.} there is no known simple solution for the one-dimensional GW problem. Even for uniform distributions over $n$ distinct points, for which it was previously believed that the optimal GW coupling is always induced by the identity or anti-identity permutations \cite{vayer2020sliced}, it was recently shown that this is not true in general \cite{beinert2022assignment}. Indeed, \cite{beinert2022assignment} produced an example of discrete distributions, defined up to a tuning parameter $\xi$, for which the identity or anti-identity become suboptimal once $\xi$ surpasses a~certain~threshold. We revisit this example and attempt to better understand it using our dual~formulation. 

Consider two uniform distributions on $n$ distinct points, i.e., $\mu=n^{-1}\sum_{i=1}^n\delta_{x_i}$ and $\nu=n^{-1}\sum_{i=1}\delta_{y_i}$, where $(x_i)_{i=1}^n,(y_i)_{i=1}^n\subset\RR$ with $x_1<x_2<\ldots<x_n$ and $y_1<y_2<\ldots<y_n$. To compute $\GW(\mu,\nu)$ it suffices to optimize over couplings induced by permutations 
\cite[Theorem 9.2]{vayer2020sliced} (see also \cite{maron2018probably}), i.e., 
\begin{equation}
\GW(\mu,\nu)^2=\frac{1}{n^2}\min_{\sigma\in S_n}\sum_{i=1}^n\sum_{j=1}^n\big||x_i-x_j|^2-|y_{\sigma(i)}-y_{\sigma(j)}|^2\big|^2,\label{eq:GW_1d}
\end{equation}
where $S_n$ is the symmetric group over $n$ elements. 
For $\xi\in(0,2/(n-3))$ and $n>6$, define the point sets $x^\xi=(x^\xi_i)_{i=1}^n$ and $y^\xi=(y^\xi_i)_{i=1}^n$ as
\begin{equation}
x^\xi_i\coloneqq \begin{cases}
    -1,& i=1\\
    \frac{2i-n-1}{2}\xi,&2\leq i\leq n-1\\
    1,&i=n
\end{cases}
\qquad\mbox{and}\qquad
y^\xi_i\coloneqq \begin{cases}
    -1,& i=1\\
    -1+\xi,& i=2\\
    (i-2)\xi,&3\leq i\leq n
\end{cases}.\label{eq:GW_1d_datasets}
\end{equation}
Note that each of these sets indeed has ascending ordered, pairwise distinct components. 
The proof of Proposition 1 in \cite{beinert2022assignment} shows that there exists $\xi^\star\in(0,2/(n-3))$, such that the cyclic permutation $\sigma_\mathrm{cyc}(i)=i+1\mod n$ between $x^{\xi^\star}$ and $y^{\xi^\star}$ achieves a strictly smaller cost in \eqref{eq:GW_1d} than both the identity $\mathrm{id}(i)=i$ and the anti-identity $\overline{\mathrm{id}}(i)=n-i+1$ permutations.

To better understand the reason for the existence of strict optimizers outside the boundary, we recall 
that $\GW(\mu,\nu)^2=\EGW^1(\mu,\nu)+\EGW^{2}(\mu,\nu)$ and henceforth focus on $\EGW^{2}(\mu,\nu)$, which is the term that depends on the coupling. As mentioned before, this decomposition requires $\mu$ and $\nu$ to be centered, but we may assume this w.l.o.g. due the translation invariance of the GW-distance and of optimal~permutations. By \cref{cor:gw_duality} we have the following representation:
\[
    \EGW^2(\mu,\nu) = \inf_{\mathbf{A}\in\cD_M} 32\|\mathbf{A}\|_\F^2+  \inf_{\pi\in\Pi(\mu,\nu)}\int c_\mathbf{A}(x,y)  d \pi(x,y).
\]
Specializing to the one-dimensional case, we further obtain
\begin{equation}
    \EGW^2(\mu,\nu) = \inf_{a\in[0.5 W_-,0.5 W_+]} 32a^2+  \inf_{\pi\in\Pi(\mu,\nu)}\int \big(-4x^2y^2-32axy \big) d \pi(x,y),\label{eq:GW_1d_S2}
\end{equation}
where $W_-\coloneqq \inf_{\pi\in\Pi(\mu,\nu)}\int xy d \pi(x,y)$ and $W_+\coloneqq \sup_{\pi\in\Pi(\mu,\nu)}\int xy d \pi(x,y)$. Here, we have used the fact that, switching the infima order, for each $\pi\in\Pi(\mu,\nu)$, optimality is attained at $a^\star(\pi)=\frac{1}{2}\int xy d \pi(x,y)$. The notation $W_-$ and $W_+$ reflects the relation to the 2-Wasserstein distance: indeed, $2W_+=M_2(\mu)+M_2(\nu)-\sW_2^2(\mu,\nu)$, while $W_-$ is OT with product~cost.

Once we identify the optimal $a^\star$ in \eqref{eq:GW_1d_S2}, the GW problem is reduced to an OT problem. Hence, we investigate the optimization in $a$. Define $f(a)\coloneqq 32a^2$ and $g(a)\coloneqq \inf_{\pi\in\Pi(\mu,\nu)}\int \big(-4x^2y^2-32axy \big) d \pi(x,y)$, and note that $g$ is concave (as the infimum of affine functions). We see that the optimization over~$a$ in \eqref{eq:GW_1d_S2}, which is rewritten as $\inf_{a\in[0.5 W_-,0.5 W_+] }(f+g)(a)$, minimizes the sum of a convex and a concave function. The next proposition identifies a correspondence between the boundary values of $a$ and optimal permutations in \eqref{eq:GW_1d}; see \cref{sec:prop:1d_opt_proof} for the proof.

\begin{proposition}[Boundary values and optimal permutations]\label{prop:1d_opt}
    Consider the GW problem from \eqref{eq:GW_1d} between uniform distributions over $n$ distinct points and its representation as $\GW(\mu,\nu)^2=\EGW^1(\mu,\nu)+\EGW^2(\mu,\nu)$, where $\EGW^2(\mu,\nu)$ is given in \eqref{eq:GW_1d_S2}. Let $\cS^\star\subset S_n$ and $\cA^\star\subset[0.5 W_-,0.5 W_+]$ be the argmin sets for \eqref{eq:GW_1d} and \eqref{eq:GW_1d_S2}, respectively. Then $\cA^\star\subset\{0.5 W_-,0.5 W_+\}$ if and only if $\cS^\star\subset\{\mathrm{id},\overline{\mathrm{id}}\}$.
\end{proposition}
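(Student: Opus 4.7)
The plan is to exploit the joint minimization structure behind \eqref{eq:GW_1d_S2} to bijectively relate $\cA^\star$ to the argmin set for $\EGW^2$ over couplings. Define $F(a,\pi) := 32 a^2 + \int (-4x^2y^2 - 32 a xy)\, d\pi$, so that $\EGW^2(\mu,\nu) = \inf_{a,\pi} F(a,\pi)$. For any fixed $\pi$, the map $a \mapsto F(a,\pi)$ is strictly convex and minimized uniquely at $\tfrac{1}{2}\int xy\,d\pi$. Swapping the infima identifies the joint minimizers of $F$ with pairs $\big(\tfrac{1}{2}\int xy\,d\pi^\star,\pi^\star\big)$, where $\pi^\star$ ranges over the set $\Pi^\star \subset \Pi(\mu,\nu)$ of couplings minimizing $\EGW^2$. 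In particular, $\cA^\star = \big\{\tfrac{1}{2}\int xy\,d\pi^\star : \pi^\star \in \Pi^\star\big\}$.

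Two structural observations fuel the rest of the argument. First, the functional $\pi \mapsto \EGW^2(\pi) = -4\int x^2 y^2 \, d\pi - 8\big(\int xy\, d\pi\big)^2$ is concave in $\pi$ (affine minus a square of a linear functional). Second, since $\mu$ and $\nu$ are uniform over $n$ distinct points, Birkhoff's theorem identifies $\Pi(\mu,\nu)$ with the (scaled) Birkhoff polytope, whose extreme points are precisely the permutation couplings $\pi_\sigma$. Combining concavity with the extreme-point principle shows that the minimum over $\Pi(\mu,\nu)$ is attained at some $\pi_\sigma$, and hence $\cS^\star \ne \emptyset$; moreover $\cS^\star = \{\sigma \in S_n : \pi_\sigma \in \Pi^\star\}$. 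Since $x_1<\cdots<x_n$ and $y_1<\cdots<y_n$, the strict rearrangement inequality yields that $\sigma \mapsto \tfrac{1}{n}\sum_i x_i y_{\sigma(i)}$ attains its unique maximum $W_+$ at $\sigma = \mathrm{id}$ and its unique minimum $W_-$ at $\sigma = \overline{\mathrm{id}}$.

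For the forward direction, pick $\sigma \in \cS^\star$; then $\pi_\sigma \in \Pi^\star$, so $\tfrac{1}{2n}\sum_i x_i y_{\sigma(i)} \in \cA^\star \subseteq \{0.5 W_-, 0.5 W_+\}$, which by the strict rearrangement uniqueness forces $\sigma \in \{\mathrm{id}, \overline{\mathrm{id}}\}$, proving $\cS^\star \subseteq \{\mathrm{id}, \overline{\mathrm{id}}\}$.

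The backward direction is the main technical step. Given $a^\star \in \cA^\star$, pick $\pi^\star \in \Pi^\star$ with $a^\star = \tfrac{1}{2}\int xy\,d\pi^\star$ and decompose $\pi^\star = \sum_\sigma c_\sigma \pi_\sigma$ via Birkhoff. Chaining the concavity bound $\EGW^2(\pi^\star) \ge \sum_\sigma c_\sigma \EGW^2(\pi_\sigma)$ with $\EGW^2(\pi_\sigma) \ge \EGW^2(\pi^\star) = \min$ forces all of these inequalities to be equalities, so every $\sigma$ with $c_\sigma > 0$ lies in $\cS^\star \subseteq \{\mathrm{id},\overline{\mathrm{id}}\}$. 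Since the affine contribution $-4\int x^2 y^2\, d\pi$ automatically respects Jensen's equality, the residual equality must come from the strictly concave piece $v \mapsto -8 v^2$ evaluated at $v(\pi_\sigma) = \int xy\,d\pi_\sigma$; strict concavity of $v \mapsto -8v^2$ then forces $\int xy\,d\pi_\sigma$ to be constant over the $\sigma$ with $c_\sigma > 0$. Consequently $\int xy\,d\pi^\star$ equals either $W_+$ or $W_-$, yielding $a^\star \in \{0.5 W_-, 0.5 W_+\}$. The subtle step that I expect to be the main obstacle is exactly this concavity--equality analysis, which rules out the possibility that a genuine mixture of $\pi_{\mathrm{id}}$ and $\pi_{\overline{\mathrm{id}}}$ (in the generic case $W_+ \ne W_-$) produces a new minimizer with an intermediate $a$-value strictly inside $(0.5 W_-, 0.5 W_+)$.
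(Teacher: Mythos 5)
Your proof is correct and the forward direction essentially coincides with the paper's (both apply strict rearrangement to the $a$-value associated with a candidate optimal permutation). The backward direction, however, takes a genuinely different route. The paper works on the univariate function $a\mapsto (f+g)(a)$: since $g$ is piecewise linear and concave while $f$ is smooth and convex, any interior global minimum must be a differentiability point with $(f+g)'(\tilde a)=0$, which forces $\int xy\,d\tilde\pi=2\tilde a$ for every $\tilde\pi$ in the inner argmin and then leads to a contradiction via the optimality of $\tilde\pi$ for $\EGW^2$. You instead decompose an optimal $\pi^\star\in\Pi^\star$ via Birkhoff and run a Jensen-equality argument on the strictly concave piece $v\mapsto -8v^2$: since the affine part of $\EGW^{2}$ contributes no Jensen slack, tightness in the concavity bound forces $v(\pi_\sigma)$ to be constant across every $\sigma$ supported in the decomposition, and combined with $\cS^\star\subset\{\mathrm{id},\overline{\mathrm{id}}\}$ and $W_-<W_+$ this rules out a genuine mixture of $\pi_{\mathrm{id}}$ and $\pi_{\overline{\mathrm{id}}}$. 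Your version leans more explicitly on the polytope geometry of $\Pi(\hat\mu_n,\hat\nu_n)$, which makes the combinatorial mechanism transparent but ties the argument to the discrete-uniform setting; the paper's version stays entirely on the dual side and only needs that $g$ be piecewise affine, so it is closer in spirit to the general duality framework being developed. One minor gap worth flagging: you should note, as the paper does, that $\mu,\nu$ must first be recentered (w.l.o.g.\ by translation invariance of GW and of optimal permutations) for the identity $\GW^2=\EGW^1+\EGW^2$ and your expansion $\EGW^2(\pi)=-4\int x^2y^2\,d\pi - 8\big(\int xy\,d\pi\big)^2$ to hold.
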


\begin{figure*}[!t]
\hspace{-10mm}
\centering
 \begin{subfigure}[t]{0.45\textwidth}
 \centering
\includegraphics[width=1.1\linewidth]{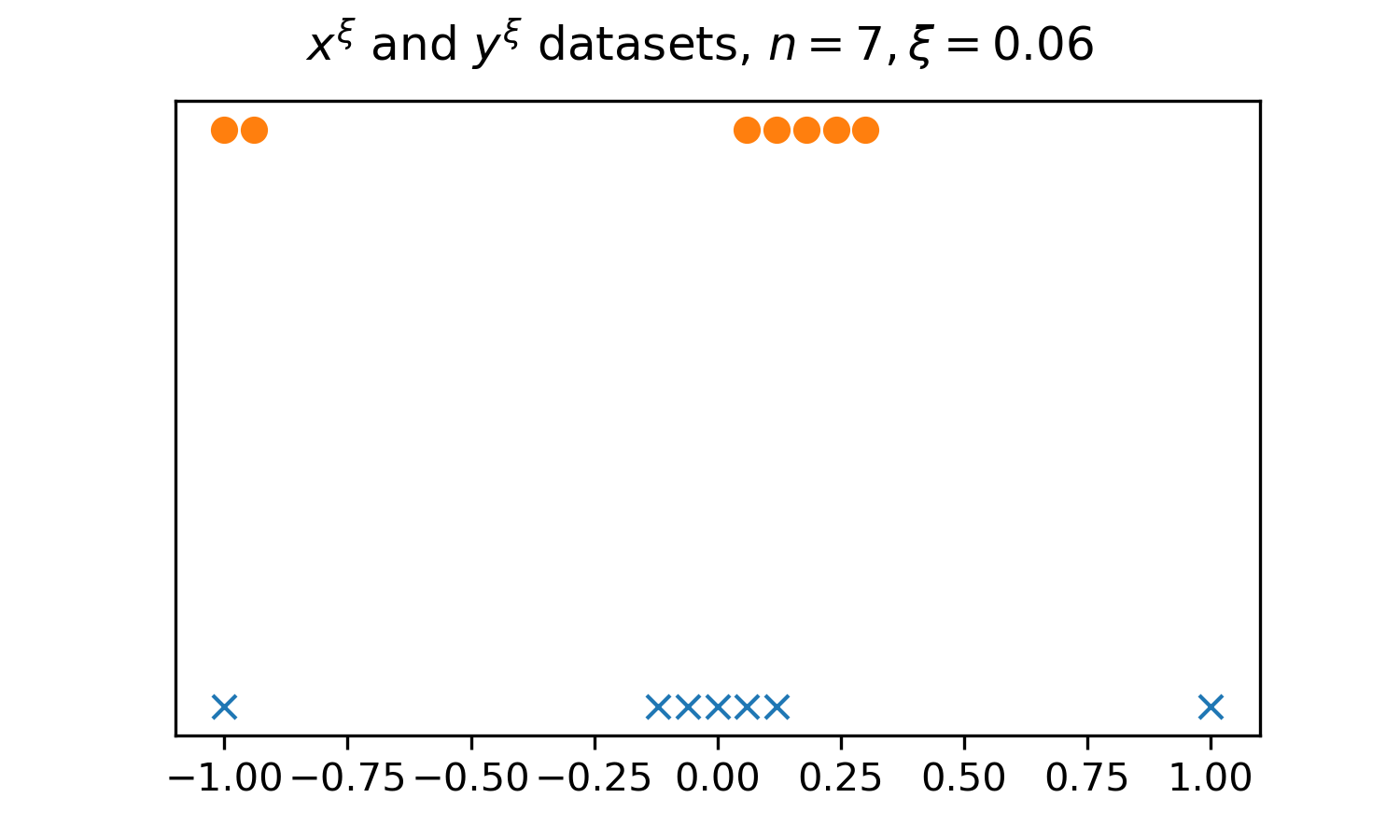}
 \end{subfigure}
\hspace{-2mm}
\centering
 \begin{subfigure}[t]{0.55\textwidth}
 \centering
\includegraphics[width=1.1\linewidth]{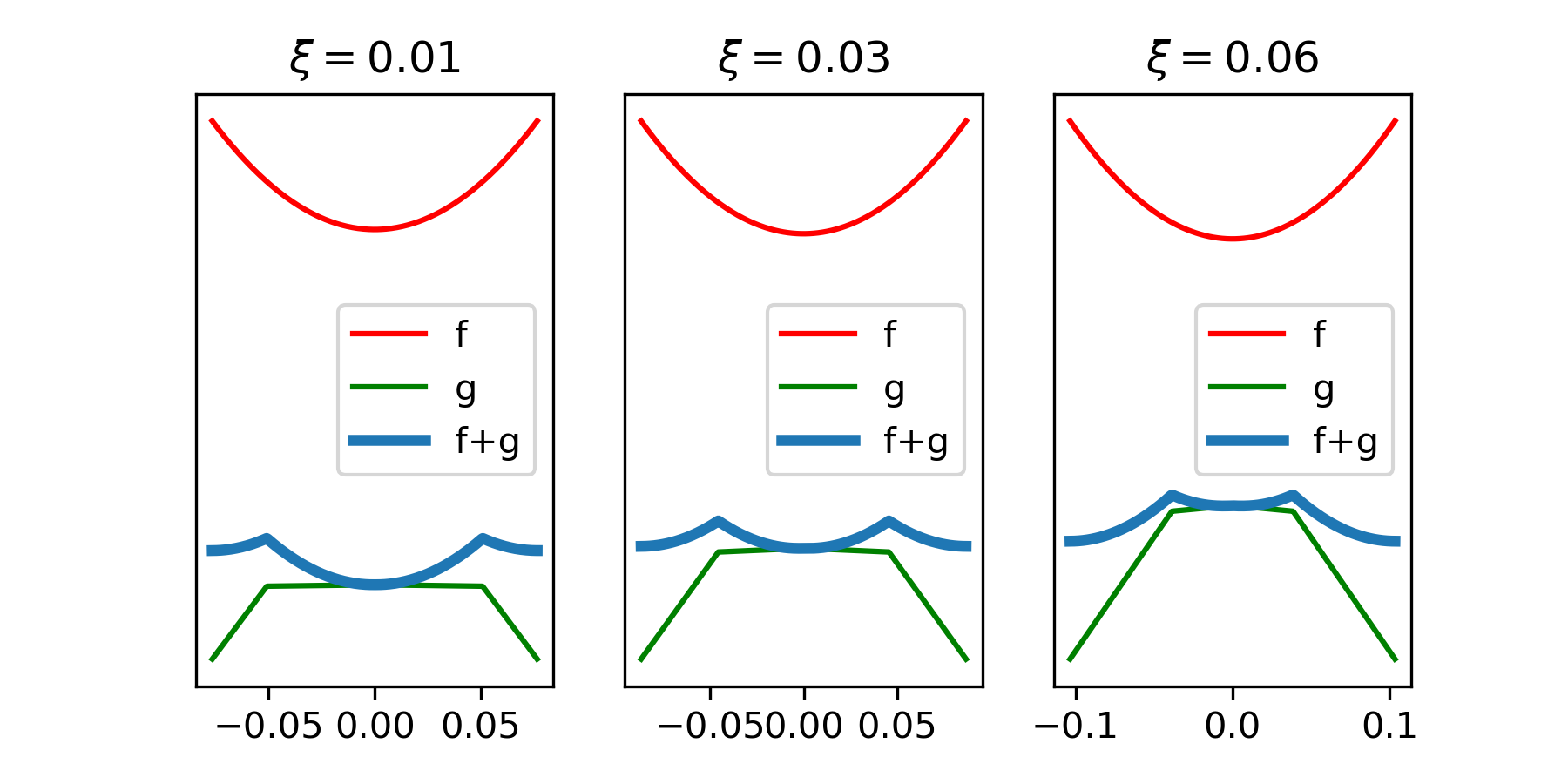}
 \end{subfigure}
 \captionsetup{width=1\linewidth}
 \caption{(Left) The datasets $x^\xi$ and $y^\xi$ from \eqref{eq:GW_1d_datasets}, for $n=7$ and $\xi=0.06$; (Right) The functions $f$, $g$, and $f+g$ on the interval $a\in[0.5 W_-,0.5 W_+]$, for $\xi=0.01,\,0.03,\,0.06$. When $\xi=0.01$, the minimizer of $f+g$ is attained outside the boundary and thus the corresponding optimal permutation is neither the identity nor the anti-identity.
 }
  \label{fig:1d_example}
\end{figure*}

\cref{prop:1d_opt} thus implies that the identity and anti-identity can only optimize the GW distance when \eqref{eq:GW_1d_S2} achieves its minimum on the boundary. 
However, as $f$ is convex and $g$ is concave, it is not necessarily the case that $\cA^\star$ contains only boundary points, as other values may be optimal. 
To visualize this behavior, \cref{fig:1d_example} plots the two datasets $x^\xi$ and $y^\xi$ from \eqref{eq:GW_1d_datasets} and the corresponding $f$, $g$, and $f+g$ functions for different $\xi$ values. While the infimum is achieved at the boundaries for $\xi=0.06$ and $\xi=0.03$, when $\xi=0.01$ the optimizing $a^\star\approx0$ and, by \cref{prop:1d_opt}, the optimal permutation is different from $\mathrm{id}$ and $\overline{\mathrm{id}}$. The structure of the corresponding optimal coupling is not trivial, as already seen from the proof of Proposition 1 from \cite{beinert2022assignment}. Better understanding the~relation between optimal $a$ values and their corresponding couplings is an interesting research avenue. Nevertheless, the above clarifies the optimization structure of the one-dimensional GW problem and provides a visual argument for the suboptimality of $\mathrm{id}$ and $\overline{\mathrm{id}}$ in the example above.

\section{Proofs of Main Theorems}\label{sec:proofs}

\subsection{Proof of Theorem \ref{thm:egw_duality}}\label{proof:thm:egw_duality} 

For completeness we first show the decomposition of $\EGW_\eps(\mu,\nu)$ for centered $\mu,\nu$, given in \eqref{eq:egw_decomposition}. Expanding the $(2,2)$-GW cost we have
\begin{align*}\numberthis\label{eq:proof_of_EGW_decomposition}
 &\EGW_\eps(\mu,\nu)=  \int\|x\mspace{-1mu}-\mspace{-1mu}x'\|^4 d  \mu\mspace{-2mu}\otimes\mspace{-2mu}\mu(x,x') \mspace{-2mu}+ \mspace{-3mu}\int \mspace{-3mu}\|y\mspace{-1mu}-\mspace{-1mu}y'\|^4d  \nu\mspace{-2mu}\otimes\mspace{-2mu}\nu(y,y') \mspace{-1mu}-\mspace{-1mu} 4\mspace{-3mu}\int \mspace{-3mu}\|x\|^2\|y\|^2  d  \mu\mspace{-2mu}\otimes\mspace{-2mu} \nu(x,y)\\
 &\hspace{3em} +\inf_{\pi\in\Pi(\mu,\nu)} \Bigg\{-4 \int \|x\|^2\|y\|^2  d \pi(x,y) -8\int \langle x,x'\rangle\langle y,y'\rangle d \pi\otimes\pi(x,y,x',y')\\
 &\hspace{6em}+8\int \big(\langle x,x' \rangle \|y\|^2 + \|x\|^2\langle y,y' \rangle\big) d \pi\otimes\pi(x,y,x',y')+ \eps \dkl(\pi \| \mu\otimes \nu)\Bigg\}.
\end{align*}
By the centering assumption, the term in the last line nullifies, while the first and second lines on the RHS correspond to $\EGW_{1}(\mu,\nu)$ and $\EGW^2_{\eps} (\mu,\nu)$, respectively.

We now move to derive the dual form for $\EGW^2_{\eps} $. Recall that $M_{\mu,\nu}\coloneqq \sqrt{M_2(\mu)M_2(\nu)}$, $\cD_{M_{\mu,\nu}}\coloneqq  [-M_{\mu,\nu}/2,M_{\mu,\nu}/2]^{d_x\times d_y}$. Consider:
\begin{align*}
    \EGW^2_{\eps} (\mu,\nu) &= \mspace{-3mu}\inf_{\pi\in\Pi(\mu,\nu)}\int- 4\|x\|^2\|y\|^2  d \pi(x,y) \mspace{-3mu}-8\mspace{-3mu}\sum_{\substack{1\leq i\leq d_x\\1\leq j \leq d_y}}\mspace{-3mu}\Big(\mspace{-3mu}\int\mspace{-3mu}  x_iy_j d \pi(x,y)\Big)^2 \mspace{-3mu}+ \eps \dkl(\pi \|\mu\otimes\nu)\\
    &= \mspace{-3mu}\inf_{\pi\in\Pi(\mu,\nu)}\int\mspace{-3mu}- 4\|x\|^2\|y\|^2  d \pi(x,y) \mspace{-3mu}+\mspace{-3mu}\sum_{\substack{1\leq i\leq d_x\\1\leq j \leq d_y}} \mspace{-3mu}  \inf_{|a_{ij}|\leq \frac{M_{\mu,\nu}}{2}}  \mspace{-3mu}32\left(\mspace{-3mu}a_{ij}^2 \mspace{-3mu}-\mspace{-3mu} \int  a_{ij} x_iy_j d \pi(x,y)\mspace{-3mu}\right)\\
    &\mspace{461mu} + \eps \dkl(\pi \|\mu\otimes\nu)\\
    &=\mspace{-3mu}\inf_{\mathbf{A}\in\cD_{M_{\mu,\nu}}} \inf_{\pi\in\Pi(\mu,\nu)}  \int- 4\|x\|^2\|y\|^2  d \pi(x,y) \mspace{-1mu}+\mspace{-8mu}\sum_{\substack{1\leq i\leq d_x\\1\leq j \leq d_y}}  \mspace{-5mu}32\left(a_{ij}^2    -\int  a_{ij} x_iy_j d \pi(x,y)\right)\\
    &\mspace{461mu}+ \eps \dkl(\pi \|\mu\otimes\nu)\\
    &=\mspace{-3mu}\inf_{\mathbf{A}\in\cD_{M_{\mu,\nu}}} \mspace{-3mu}32\|\mathbf{A}\|_\F^2+ \inf_{\pi\in\Pi(\mu,\nu)}  \int c_\mathbf{A}(x,y)  d \pi(x,y)  + \eps \dkl(\pi \|\mu\otimes\nu)\\
\end{align*}

\noindent where in the second step we introduced $a_{ij}$ whose optimum is achieved at $\frac 12 \int x_iy_j d \pi(x,y)$. This means we may restrict the optimization to $\cD_{M_{\mu,\nu}}$ without affecting the value since $\int x_iy_j d \pi(x,y)\leq M_{\mu,\nu}$ by the Cauchy–Schwarz inequality. We also switched the order of the two $\inf$ and claimed that the optimums are achieved, which follows from the lower semicontinuity in $\pi$ and $\mathbf{A}$. We conclude by identifying the EOT problem $\mathsf{OT}_{\mathbf{A},\eps}$ in the last line. 
\qed

\subsection{Proof of Theorem \ref{thm:egw_sample_complex}}\label{proof:thm:egw_sample_complex} 
We only prove the two-sample case; the one-sample derivation is similar, except that in \eqref{eq:sample_complex_emp_process} ahead one would only consider the empirical process induced by $\mu$. 
Proofs of technical lemmas stated throughout this proof are given in~\cref{appen:proof:egw_sample_complex_lemmas}. We proceed with the three steps described in the proof outline, after the theorem statement.

\medskip

\noindent\underline{\textit{Decomposition}:} Recall from \eqref{eq:egw_decomposition} that $\EGW_\eps(\mu,\nu) = \EGW^1(\mu,\nu) + \EGW^2_{\eps} (\mu,\nu)$ holds if $\mu,\nu$ are centered distributions. This decomposition is convenient for
analysis as it allows  separately treating the marginals- and the coupling-dependents terms. Namely, we would like to have
\[
\big|\EGW_\eps(\mu,\nu)-\EGW_\eps(\hat{\mu}_n,\hat{\nu}_n)\big|\leq \big|\EGW^1(\mu,\nu)-\EGW^1(\hat{\mu}_n,\hat{\nu}_n)\big| + \big|\EGW^2_{\eps} (\mu,\nu)-\EGW^2_{\eps} (\hat{\mu}_n,\hat{\nu}_n)\big|.
\]
However, while the EGW distance $\EGW_\eps$ is translation invariant and we may assume $\int x d \mu=\int y d \nu=0$ w.l.o.g., the empirical measures $\hat{\mu}_n,\hat{\nu}_n$ are generally not centered and the decomposition into $\EGW^1$ and $\EGW^2_{\eps} $ may not hold. To amend this, we center $\hat{\mu}_n,\hat{\nu}_n$ and quantify the bias that this incurs on $\EGW_\eps$. This is stated in the following lemma, which is proven in \cref{appen:proof:lemma:bias}

\begin{lemma}[Centering bias]\label{lemma:bias}
If $\mu,\nu$ are centered, then 
\[
\EE\big[\big|\EGW_\eps(\mu,\nu)\mspace{-3mu}-\mspace{-3mu}\EGW_\eps(\hat{\mu}_n,\hat{\nu}_n)\big|\big]\lesssim \EE\big[\big|\EGW^1(\mu,\nu)\mspace{-3mu}-\mspace{-3mu}\EGW^1(\hat{\mu}_n,\hat{\nu}_n)\big|\big] + \EE\big[\big|\EGW^2_{\eps} (\mu,\nu)-\EGW^2_{\eps} (\hat{\mu}_n,\hat{\nu}_n)\big|\big]+\frac{\sigma^2}{\sqrt{n}} .
\]
\end{lemma}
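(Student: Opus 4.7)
The strategy is to exploit the translation invariance of $\EGW_{\eps}$ to reduce the problem to comparing the \emph{centered} empirical measures, which do satisfy the decomposition \eqref{eq:egw_decomposition}, and then to quantify the bias introduced by this centering. Set $\bar{X}\coloneqq n^{-1}\sum_{i=1}^n X_i$ and $\bar{Y}\coloneqq n^{-1}\sum_{i=1}^n Y_i$, and define the centered empirical measures $\tilde{\mu}_n\coloneqq (x\mapsto x-\bar{X})_\sharp\hat{\mu}_n$ and $\tilde{\nu}_n\coloneqq (y\mapsto y-\bar{Y})_\sharp\hat{\nu}_n$. Since the GW integrand $\big|\|x-x'\|^2-\|y-y'\|^2\big|^2$ depends on its arguments only through pairwise differences and the KL divergence is invariant under the simultaneous translation of a measure and its reference, we have $\EGW_{\eps}(\hat{\mu}_n,\hat{\nu}_n)=\EGW_{\eps}(\tilde{\mu}_n,\tilde{\nu}_n)$. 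Because $\tilde{\mu}_n$ and $\tilde{\nu}_n$ are centered, \eqref{eq:egw_decomposition} applies and yields $\EGW_{\eps}(\tilde{\mu}_n,\tilde{\nu}_n)=\EGW^1(\tilde{\mu}_n,\tilde{\nu}_n)+\EGW^2_{\eps}(\tilde{\mu}_n,\tilde{\nu}_n)$. Invoking the same decomposition for the (centered) populations $(\mu,\nu)$ and applying the triangle inequality twice reduces the claim to showing
\begin{equation*}
\EE\big[\big|\EGW^1(\hat{\mu}_n,\hat{\nu}_n)-\EGW^1(\tilde{\mu}_n,\tilde{\nu}_n)\big|\big]+\EE\big[\big|\EGW^2_{\eps}(\hat{\mu}_n,\hat{\nu}_n)-\EGW^2_{\eps}(\tilde{\mu}_n,\tilde{\nu}_n)\big|\big]\lesssim \frac{\sigma^2}{\sqrt{n}}.
\end{equation*}

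For the $\EGW^1$ bias, the integrals $\int\|x-x'\|^4\,d\hat{\mu}_n\otimes\hat{\mu}_n$ and $\int\|y-y'\|^4\,d\hat{\nu}_n\otimes\hat{\nu}_n$ are translation invariant, so only the cross term $-4\int\|x\|^2\|y\|^2\,d\hat{\mu}_n\otimes\hat{\nu}_n$ contributes. The Steiner-type identity $\int\|x\|^2\,d\hat{\mu}_n=\mathbf{s}_x^2+\|\bar{X}\|^2$, with $\mathbf{s}_x^2\coloneqq\int\|x-\bar{X}\|^2\,d\hat{\mu}_n$ (and analogously for $y$), gives the closed form
\begin{equation*}
\EGW^1(\hat{\mu}_n,\hat{\nu}_n)-\EGW^1(\tilde{\mu}_n,\tilde{\nu}_n)=-4\big(\mathbf{s}_x^2\|\bar{Y}\|^2+\|\bar{X}\|^2\mathbf{s}_y^2+\|\bar{X}\|^2\|\bar{Y}\|^2\big).
\end{equation*}
The $4$-sub-Weibull assumption yields $\EE[\mathbf{s}_x^2],\EE[\mathbf{s}_y^2]\lesssim\sigma^2$ and, using the centering of $\mu,\nu$ together with the i.i.d. structure, $\EE[\|\bar{X}\|^2]\lesssim\sigma^2/n$ (similarly for $\bar{Y}$). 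Factoring by independence of the $X$ and $Y$ samples produces a bias of order $\sigma^4/n$, which is dominated by $\sigma^2/\sqrt{n}$.

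For the $\EGW^2_{\eps}$ bias, note that the map $\pi\mapsto\tilde{\pi}\coloneqq((x,y)\mapsto(x-\bar{X},y-\bar{Y}))_\sharp\pi$ is a bijection between $\Pi(\hat{\mu}_n,\hat{\nu}_n)$ and $\Pi(\tilde{\mu}_n,\tilde{\nu}_n)$ that preserves the KL divergence with respect to the corresponding product references. Substituting optimizers across this bijection into each infimum bounds the bias by the supremum over $\pi\in\Pi(\hat{\mu}_n,\hat{\nu}_n)$ of
\begin{equation*}
\bigg|4\int\!\big(\|x\|^2\|y\|^2-\|x-\bar{X}\|^2\|y-\bar{Y}\|^2\big)d\pi+8\sum_{i,j}\bigg[\Big(\int x_iy_j\,d\pi\Big)^{\!2}-\Big(\int(x_i-\bar{X}_i)(y_j-\bar{Y}_j)\,d\pi\Big)^{\!2}\bigg]\bigg|.
\end{equation*}
Expanding each product rewrites this as a finite sum of contributions of the form $\|\bar{X}\|^{a}\|\bar{Y}\|^{b}$ (with $a+b\ge 1$) multiplied by bounded-degree monomial integrals of $x,y$ against $\pi$, and the latter are controlled uniformly in $\pi$ via Cauchy-Schwarz and the sub-Weibull moment bounds. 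Taking expectations, independence of the two samples factorizes them, and $\EE[\|\bar{X}\|^a\|\bar{Y}\|^b]\lesssim\sigma^{a+b}/n^{(a+b)/2}$ (by independence and Jensen) delivers the advertised $\sigma^2/\sqrt{n}$ rate.

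The principal difficulty lies in the $\EGW^2_{\eps}$ step: because this functional is defined by an infimum over couplings, one cannot differentiate inside the objective and must instead rely on the translation bijection above. The careful bookkeeping consists in verifying that every surviving term in the expansion of $\|x-\bar{X}\|^2\|y-\bar{Y}\|^2$ and $\big(\int(x_i-\bar{X}_i)(y_j-\bar{Y}_j)\,d\pi\big)^2$ inherits at least one factor of $\bar{X}$ or $\bar{Y}$, so that its expectation is driven by $\EE[\|\bar{X}\|]+\EE[\|\bar{Y}\|]=O(\sigma/\sqrt{n})$; the remaining joint-moment integrals against $\pi$ are then handled mechanically via Cauchy-Schwarz and the $4$-sub-Weibull hypothesis.
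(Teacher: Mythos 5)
Your proof is correct and follows essentially the same route as the paper's: exploit translation invariance to replace $(\hat\mu_n,\hat\nu_n)$ by their centered versions $(\tilde\mu_n,\tilde\nu_n)$, invoke the decomposition \eqref{eq:egw_decomposition}, and then bound each centering bias --- with the $\EGW^2_\eps$ bias handled by the coupling bijection $\pi\mapsto\tilde\pi$ (which preserves the KL term) to reduce the difference of infima to a supremum of translated integrands. The paper performs the same steps, merely writing the $\EGW^1$ bias as an explicit moment difference rather than invoking the Steiner identity by name.
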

\noindent Given this decomposition, we proceed to separately treat the empirical errors of $\EGW^1$ and $\EGW^2_{\eps}$. 

\medskip

\paragraph{\textbf{Sample complexity of $\bm{\EGW^1}$.}}
% \noindent\underline{\textit{Sample complexity of $\EGW^1$}:} 
The analysis of $\EGW^1$ reduces to estimating moments of $\mu,\nu$, with parametric convergence rate for the error. The following lemma is proven in \cref{appen:proof:lemma:EGW_S1}.

\begin{lemma}[$\EGW^1$ parametric rate]\label{lemma:EGW_S1}
    If $\mu,\nu$ are 4-sub-Weibull with parameter $\sigma^2>0$, then 
\[
    \EE\big[\big|\EGW^1(\mu,\nu) - \EGW^1(\hat{\mu}_n,\hat{\nu}_n)\big|\big] 
    \lesssim \frac{1+\sigma^4}{\sqrt{n}}.
\]
\end{lemma}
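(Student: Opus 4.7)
The key observation driving the proof is that $\EGW^1(\mu,\nu)$ is a fixed polynomial in the low-order moments of $\mu$ and $\nu$ alone. Consequently its empirical estimation decouples completely into finitely many one-dimensional moment-estimation problems, each admitting a routine concentration analysis at the parametric rate. This is precisely what distinguishes $\EGW^1$ from the much harder coupling-dependent term $\EGW^2_\eps$, for which the duality machinery is needed.

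I would begin by expanding $\|x-x'\|^4 = (\|x\|^2 - 2\langle x,x'\rangle + \|x'\|^2)^2$ and integrating term-by-term against $\mu\otimes\mu$. This writes $\int\|x-x'\|^4 d\mu\otimes\mu$ as a linear combination of quantities of the form $\mu f$ and products $(\mu f)(\mu g)$, where $f,g$ are monomials in the coordinates of $x$ of total degree at most four (for instance, $\int\|x\|^4 d\mu$, $M_2(\mu)^2$, $\sum_{i,j}(\int x_i x_j d\mu)^2$, and $\langle \int \|x\|^2 x\, d\mu,\int x\, d\mu\rangle$). An identical decomposition handles the $\nu$-integral, while the mixed term factors exactly as $M_2(\mu)M_2(\nu)$. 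Thus the entire estimation reduces to controlling $|\mu f - \hat\mu_n f|$ and analogous quantities for $\nu$, together with a product term.

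For each polynomial moment of degree $d\leq 4$, the plug-in estimator satisfies $\EE|\mu f - \hat\mu_n f| \leq n^{-1/2}\sqrt{\Var(f(X))} \leq n^{-1/2}\sqrt{\EE[f(X)^2]}$. Under the 4-sub-Weibull assumption, the exponential moment bound $\int e^{t\|x\|^2}d\mu \leq 2e^{t^2\sigma^2/2}$ recalled in the notation section implies $\EE[\|X\|^{2k}]\lesssim_k \sigma^{2k}$ for all integer $k\geq 0$ (via standard Chernoff/tail integration). Taking $k$ up to $8$ covers all quantities $\EE[f(X)^2]$ that appear, yielding a bound of order $(1+\sigma^4)/\sqrt{n}$ on each individual moment-estimation error. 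The cross term is handled by the elementary splitting
\[
\big|M_2(\mu)M_2(\nu) - M_2(\hat\mu_n)M_2(\hat\nu_n)\big| \leq M_2(\nu)\big|M_2(\mu)-M_2(\hat\mu_n)\big| + M_2(\hat\mu_n)\big|M_2(\nu)-M_2(\hat\nu_n)\big|,
\]
followed by taking expectation and using independence of the two samples together with $M_2(\nu)\lesssim \sigma^2$, $\EE[M_2(\hat\mu_n)]=M_2(\mu)\lesssim \sigma^2$, and the $\sigma^2/\sqrt{n}$ rate from the previous step; this gives an $\sigma^4/\sqrt{n}$ bound on the cross-term error.

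Summing the finitely many contributions and absorbing universal constants (including any depending on the polynomial degree) into the factor $1+\sigma^4$ yields the stated bound. I do not anticipate a meaningful obstacle: the proof is purely a variance-based concentration computation enabled by the polynomial structure of $\EGW^1$ in the marginal moments, and the sub-Weibull assumption is used only to furnish finite 8th moments of $\|X\|$ and $\|Y\|$ with the correct $\sigma$-scaling.
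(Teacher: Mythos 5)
Your proposal takes essentially the same route as the paper: expand $\EGW^1$ into a fixed polynomial in fourth-order moments of $\mu$ and $\nu$, reduce the estimation error to variance-based plug-in moment concentration at the $n^{-1/2}$ rate, and bound the variances via the 4-sub-Weibull moment condition. One minor inaccuracy in the sketch: the 4-sub-Weibull assumption gives $\EE[\|X\|^{2k}]\lesssim_k \sigma^k$, not $\sigma^{2k}$ — since it is $\|X\|^2$, not $\|X\|$, that is sub-Gaussian with proxy $\sigma^2$ — but the conclusion $(1+\sigma^4)/\sqrt{n}$ follows either way because $\sqrt{M_8(\mu)}\lesssim\sigma^2\leq 1+\sigma^4$.
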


\paragraph{\textbf{Sample complexity of $\bm{\EGW^2_{\eps} }$.}} 
% \noindent\underline{\textit{Sample complexity of $\EGW^2_{\eps}$}:} 
It remains to analyze the sample complexity of $\EGW^2_{\eps}$. To that end, we use the dual form of $\EGW^2_{1}$ to control its empirical error by the supremum of an empirical process indexed by optimal EGW potentials. We then derive regularity properties of the potentials, based on which standard empirical process techniques via entropy integral bounds yield the desired rate. For ease of presentation, the derivation is split into several~steps.

\medskip
\paragraph{(i) \underline{Normalization and reduction to EOT}}
% \noindent\textit{(i) Normalization and reduction to EOT.}
Observe that if $\mu^\eps,\nu^\eps$ are the pushforward measures of $\mu,\nu$ through the mapping $x\mapsto \eps^{-1/4} x$, then we have $\EGW^2_{\eps} (\mu,\nu) = \eps \EGW^2_{1}(\mu^\eps,\nu^\eps)$. Also note that $\mu^\eps,\nu^\eps$ are 4-sub-Weibull distributions with parameter $\sigma^2/\eps$. Thus, we henceforth set $\eps=1$ and later adapt to a general $\eps>0$ using the aforementioned observation. Invoking \cref{thm:egw_duality} for $\EGW^2_{1}$, while optimizing over $\mathbf{A}\in\cD_M$, for some $M\geq M_{\mu,\nu}$ to be specified later (which does not change the optimization value since $A^\star\in\cD_{M_{\mu,\nu}}$), we obtain 
\begin{equation}
\big|\EGW^2_{1}(\mu,\nu)-\EGW^2_{1}(\hat{\mu}_n,\hat{\nu}_n)\big| \leq \sup_{\mathbf{A}\in \cD_{M}} \big|\mathsf{OT}_{\mathbf{A},1}(\mu,\nu)-\mathsf{OT}_{\mathbf{A},1}(\hat{\mu}_n,\hat{\nu}_n)\big|,\label{eq:sample_complex_EOT}
\end{equation}
which reduces the analysis to that of EOT with the cost function $c_{\mathbf{A}}$, uniformly over $\mathbf{A}\in\cD_M$. We next analyze the regularity of optimal dual potentials for the EOT problems on the RHS above. This regularity theory is later used to decompose the RHS into suprema of empirical processes indexed by these potentials and to analyze their expected convergence rates.

\medskip
\paragraph{(ii) \underline{Smoothness of EOT potentials}}
% \noindent\textit{(ii) Smoothness of EOT potentials.} 
 {To simplify notation, we henceforth drop the subscript $\mathbf{A}$ from the EOT potentials $(\varphi_\mathbf{A},\psi_\mathbf{A})$ for $\mathsf{OT}_{\mathbf{A},\eps}(\mu,\nu)$, writing only $(\varphi,\psi)$.} The following lemma provides bounds on the magnitude of partial derivatives (of any order) of EOT potentials between any two sub-Weibull distribution, w.r.t. the cost $c_\mathbf{A}$, uniformly in $\mathbf{A}\in\cD_M$. 
To state the result, for any $\sigma,M>0$, let $\cF_{\sigma,M}$ be the class of $\cC^\infty(\RR^{d_x})$ functions $\varphi$ satisfying:
\begin{align*}
    &\varphi(x)\mspace{-3mu}\leq \mspace{-3mu}4\sigma^2+8M\sqrt{2\sigma d_xd_y}\left(\sqrt{\sigma}+2\|x\|^2\right)\\
    &\mspace{-3mu}-\mspace{-3mu}\varphi(x)\mspace{-3mu}\leq \mspace{-3mu}\log2\mspace{-3mu}+\mspace{-3mu}4\sigma^2\mspace{-3mu}+\mspace{-3mu}8M\mspace{-3mu}\sqrt{2\sigma d_xd_y}\mspace{-3mu}\left(\mspace{-1mu}1\mspace{-3mu}+\mspace{-3mu}\sqrt{\sigma}\mspace{-3mu}+\mspace{-3mu}\frac{\|x\|^2}{\sqrt{2\sigma}}\right)\mspace{-3mu}+\mspace{-3mu}8\sigma^2\mspace{-3mu}\left(2M\sqrt{d_xd_y}\big(1\mspace{-3mu}+\mspace{-3mu}\sqrt{2\sigma}\big)\mspace{-3mu}+\mspace{-3mu}\|x\|^2\right)^{\mspace{-3mu}2} \\
    &|D^\alpha \varphi(x)|\mspace{-3mu}\leq \mspace{-3mu}C_{\alpha} \mspace{-3mu}\big(1\mspace{-3mu}+\mspace{-3mu}M\sqrt{d_y}\mspace{-3mu}+\mspace{-3mu}\|x\|\big)^{\mspace{-3mu}|\alpha|}\left(1\mspace{-3mu} +\mspace{-3mu}\sigma^{|\alpha|} \mspace{-3mu}+ \mspace{-3mu}\big(1+M\sqrt{d_xd_y}\big)^{|\alpha|}\big(1\mspace{-3mu}+\mspace{-3mu}\sigma^5 \mspace{-3mu}+\mspace{-3mu}\sigma^4\|x\|^4\big)^{\frac{|\alpha|}{2}}\right),
\end{align*}
for all multi-indices $\alpha\in\NN_0^{d_x}$ and some constant $C_\alpha>0$ that depends only on $\alpha$. Define the class $\cG_{\sigma,M}$ analogously but for functions $\psi:\RR^{d_y}\to\RR$. 
\begin{lemma}[Uniform regularity of EOT potentials]\label{lemma:smoothness_of_EGW_potential}
Fix $M\geq M_{\mu,\nu}$, $\mathbf{A}\in\cD_M$, and suppose that $\mu,\nu$ are 4-sub-Weibull with parameter $\sigma^2$. Then there exist optimal EOT potentials $(\varphi,\psi)$ for $\mathsf{OT}_{\mathbf{A},1}(\mu,\nu)$ from \eqref{eq:EOT}, such that $\varphi\in\cF_{M,\sigma}$ and $\psi\in\cG_{M,\sigma}$.
\end{lemma}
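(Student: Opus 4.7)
The plan is to prove the lemma by exploiting the Schrödinger system \eqref{EQ:Schrodinger}, which for the cost $c_{\mathbf{A}}(x,y) = -4\|x\|^{2}\|y\|^{2} - 32 x^{\intercal}\mathbf{A}y$ yields the fixed-point relations
\begin{equation*}
\varphi(x) = -\log\int_{\RR^{d_y}} e^{\psi(y) + 4\|x\|^{2}\|y\|^{2} + 32 x^{\intercal}\mathbf{A}y}\,d\nu(y), \qquad \psi(y) = -\log\int_{\RR^{d_x}} e^{\varphi(x) + 4\|x\|^{2}\|y\|^{2} + 32 x^{\intercal}\mathbf{A}y}\,d\mu(x).
\end{equation*}
These formulas show that the potentials are $\cC^\infty$ (since $c_{\mathbf{A}}$ is polynomial) and will be the sole tool used to establish both the pointwise bounds and the derivative bounds.

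For the pointwise bounds, I would first normalize by a constant so that one of the two defining integrals equals unity on a set of positive measure, then apply Jensen's inequality to the concave $\log$ in the Schrödinger relation to obtain the one-sided estimates
\begin{equation*}
\varphi(x) \le -\int \psi\,d\nu + \int c_{\mathbf{A}}(x,y)\,d\nu(y), \qquad -\varphi(x) \le \log\int e^{\psi(y)-c_{\mathbf{A}}(x,y)}\,d\nu(y).
\end{equation*}
The former integrals are bounded directly by the Cauchy--Schwarz-type estimate $|x^{\intercal}\mathbf{A}y|\le (M/2)\sqrt{d_xd_y}\,\|x\|\|y\|$ and the 4-sub-Weibull moment bound $\int e^{t\|y\|^{2}}d\nu(y)\le 2e^{t^{2}\sigma^{2}/2}$ recalled in the Notation section, which controls $\int \|y\|^{2}d\nu$ and $\int \|y\|^{4}d\nu$ via $\sigma^{2}$. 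For the lower bound on $\varphi$, I would iterate once: using the companion Schrödinger equation for $\psi$, insert the just-obtained upper bound on $\varphi$ into the integrand, and then apply the 4-sub-Weibull bound on $\mu$ to control the resulting log-Laplace transform of the quadratic $\|x\|^{2}\|y\|^{2}$ term. This bootstrap produces the $\|x\|^{2}$ and $\|x\|^{4}$ terms that appear in the second inequality for $-\varphi(x)$.

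For the derivative bounds, differentiating the Schrödinger formula for $\varphi$ repeatedly and applying Faà di Bruno's formula expresses $D^{\alpha}\varphi(x)$ as a polynomial in conditional expectations of products of $D^{\beta}_x c_{\mathbf{A}}(x,\cdot)$ under the Schrödinger-type probability measure
\begin{equation*}
dp_x(y) = \frac{e^{\psi(y) + 4\|x\|^{2}\|y\|^{2} + 32 x^{\intercal}\mathbf{A}y}}{\int e^{\psi(y') + 4\|x\|^{2}\|y'\|^{2} + 32 x^{\intercal}\mathbf{A}y'}\,d\nu(y')}\,d\nu(y).
\end{equation*}
Since $D^{\beta}_x c_{\mathbf{A}}(x,y)$ is polynomial of degree at most two in $(x,y)$ (with coefficients linear in $\mathbf{A}$), the task reduces to bounding moments $\int \|y\|^{k}dp_x(y)$ uniformly in $\mathbf{A}\in\cD_M$ and in $x$. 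The denominator of $p_x$ is bounded below by the pointwise lower bound on $-\varphi$ just derived, while the numerator is controlled using the pointwise upper bound on $\psi$ (the analog for $\psi$, derived the same way) together with the 4-sub-Weibull bound on $\nu$. Combining these gives exponential moment bounds for $p_x$, hence polynomial moment bounds, which I would assemble via Faà di Bruno into the claimed bound with the $(1+\sigma^{|\alpha|})$ and $(1+\sigma^{5}+\sigma^{4}\|x\|^{4})^{|\alpha|/2}$ factors. The symmetric argument gives the $\psi$ bounds.

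The main obstacle is bookkeeping: tracking how the constants $M$, $\sigma$, and $\|x\|$ enter each order of derivative, since the polynomial cost produces cross terms of the form $\|x\|^{2}\|y\|^{2}$ that amplify at each differentiation, and the moment bounds for $p_x$ themselves depend (through $\psi$) on $\sigma$ and $M$. Careful use of the sub-Weibull tail bound $\int e^{t\|x\|^{2}}d\rho(x)\le 2e^{t^{2}\sigma^{2}/2}$ is essential to prevent these terms from growing super-polynomially in $\sigma$; in particular, the exponent $|\alpha|/2$ on $(1+\sigma^{5}+\sigma^{4}\|x\|^{4})$ reflects the square-root that appears when converting exponential moments of $\|y\|^{2}$ into polynomial moments. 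Uniformity in $\mathbf{A}\in\cD_M$ follows automatically once every estimate is written in terms of the envelope $|x^{\intercal}\mathbf{A}y|\le(M/2)\sqrt{d_xd_y}\|x\|\|y\|$.
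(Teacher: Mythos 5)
Your proposal follows the same skeleton as the paper's proof: extend the potentials via the Schrödinger system so they are defined and smooth on all of $\RR^{d_x}$, $\RR^{d_y}$, use Jensen and the normalization $\int\varphi_0\,d\mu=\int\psi_0\,d\nu=\tfrac12\mathsf{OT}_{\mathbf{A},1}(\mu,\nu)$ for the pointwise bounds, and use Fa\`a di Bruno to write $D^\alpha\varphi$ in terms of moments of the conditional (Schr\"odinger) measure $dp_x$. Two issues prevent this from being a correct proof as written.

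First, a smaller bookkeeping slip: you describe ``the lower bound on $\varphi$'' by inserting the upper bound on $\varphi$ into $\psi$'s Schr\"odinger relation and invoking the sub-Weibull bound on $\mu$. That procedure yields a \emph{lower} bound on $\psi$ (upper bound on $-\psi$) in terms of $\|y\|^2$ and $\|y\|^4$, not a bound on $-\varphi(x)$ involving $\|x\|^2$ and $\|x\|^4$. What is actually needed to upper-bound $-\varphi(x)=\log\int e^{\psi_0-c_\mathbf{A}}\,d\nu$ is an \emph{upper} bound on $\psi_0$ on $\supp(\nu)$, and that cannot be bootstrapped from the upper bound on $\varphi$; it is obtained by the symmetric Jensen argument applied to $\psi_0$ (or $\psi$) together with the same normalization, and then the sub-Weibull bound on $\nu$ (not $\mu$) controls the resulting log-Laplace transform.

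Second, and more seriously, the step ``combining these gives exponential moment bounds for $p_x$, hence polynomial moment bounds'' elides the crux of the derivative estimate. When you upper-bound the numerator $\int e^{\psi(y)+4\|x\|^2\|y\|^2+32x^\intercal\mathbf{A}y}(1+\|y\|^2)^{|\beta_j|}d\nu(y)$ using the pointwise upper bound on $\psi$ and the envelope $\int e^{t\|y\|^2}d\nu\le 2e^{t^2\sigma^2/2}$ with $t\gtrsim\|x\|^2$, the resulting exponential moment bound for $p_x$ scales like $e^{C\sigma^2\|x\|^4}$. Dividing by the denominator $e^{-\varphi(x)}\ge e^{-C'-C''\|x\|^2}$ does not cancel this, so a direct conversion of exponential moments to polynomial moments of $\|y\|$ would leave you with bounds of order $e^{C\sigma^2\|x\|^4}$ on $|D^\alpha\varphi(x)|$ --- not the polynomial-in-$\|x\|$ bounds that define $\cF_{\sigma,M}$. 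The missing ingredient is the truncation at a radius $\tau$: one splits $\int(1+\|y\|^2)^{|\beta_j|}dp_x(y)$ into $\|y\|<\tau$, which contributes $(1+\tau^2)^{|\beta_j|}$, and $\|y\|\ge\tau$, which after Cauchy--Schwarz against the sub-Weibull tail carries an extra factor $e^{-\tau^4/(4\sigma^2)}$; choosing $\tau^4$ proportional to $(1+M\sqrt{d_xd_y})^2(1+\sigma^5+\sigma^4\|x\|^4)$ makes the tail term absorb the $e^{C\sigma^2\|x\|^4}$ growth entirely, leaving the polynomial $(1+\tau^2)^{|\beta_j|}$ as the dominant term. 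Without this truncation-and-optimization step the derivative bounds cannot be brought into the form required for membership in $\cF_{\sigma,M}$, and the downstream entropy-integral argument in the sample-complexity proof would fail.

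You also gloss over the verification that the redefined pair $(\varphi,\psi)$ is optimal for $\mathsf{OT}_{\mathbf{A},1}(\mu,\nu)$; the paper does this by showing $\int(\varphi_0-\varphi)\,d\mu+\int(\psi_0-\psi)\,d\nu\le 0$ via Jensen and invoking strict concavity of $\log$ to conclude $\varphi=\varphi_0$ $\mu$-a.s.\ and $\psi=\psi_0$ $\nu$-a.s. This is routine but necessary since the lemma asserts existence of \emph{optimal} potentials in the class.
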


The proof of the lemma is deferred to \cref{appen:proof:lemma:smoothness_of_EGW_potential}. The key idea is that given optimal EOT potentials $(\varphi_0,\psi_0)$, we may define new potentials $(\varphi,\psi)$ via the Schr\"{o}dinger systems \eqref{EQ:Schrodinger} and show that the pairs agree $\mu\otimes\nu$-a.s. Consequently, $(\varphi,\psi)$ are also optimal for $\mathsf{OT}_{\mathbf{A},1}(\mu,\nu)$, but they enjoy an explicit representation via the Schr\"{o}dinger systems, which evidently renders $(\varphi,\psi)$ smooth functions.

Lemma \ref{lemma:smoothness_of_EGW_potential} allows restricting the optimization domain in the dual form of $\mathsf{OT}_{\mathbf{A},1}$ from $L^1(\mu)\times L^1(\nu)$ to $\cF_{M,\tilde\sigma}\times \cG_{M,\tilde\sigma}$, for an appropriately chosen $\tilde{\sigma}$. Let $\tilde{\sigma}$ be the random variables defined as the smallest $\sigma'>0$ such that $\mu,\nu,\hat{\mu}_n,\hat{\nu}_n$ are all 4-sub-Weibull with parameter $\sigma'^2$. Clearly, any $\varphi\in\cF_{M,\tilde\sigma}$ with $M=\sqrt{2}\tilde \sigma$ also satisfies
\begin{align*}
    |\varphi(x)|&\leq C_{d_x,d_y}\big(1+\tilde{\sigma}^5\big)\big(1+\|x\|^4\big)\\
    |D^\alpha \varphi(x)|&\leq C_{\alpha,d_x,d_y}   \big(1+\tilde{\sigma}^{9|\alpha|/2}\big)\big(1+\|x\|^{3|\alpha|}\big),\qquad \forall \alpha\in\NN_0^{d_x},
\end{align*}
and similarly for $\psi\in\cG_{M,\tilde\sigma}$. Recalling that \cref{thm:egw_duality} requires $M^2$ to be at least as large as the product of the 2nd moments of the involved distributions, we note that $M=\sqrt{2}\tilde \sigma$ is feasible for $\mathsf{OT}_{\mathbf{A},1}(\cdot,\cdot)$ between any pair from $\{\mu,\nu,\hat{\mu}_n\}$ and $(\nu,\hat{\nu}_n)$, for any $\mathbf{A}\in\cD_M$. Lastly, define the H\"older class
\begin{align*}
    \cF_s=\left\{\varphi:\RR^{d_x}\to\RR:|\varphi|\leq C_{s,d_x,d_y}\big(1+\|\cdot\|^4\big), |D^\alpha \varphi|\leq C_{s,d_x,d_y} \big(1+\|\cdot\|^{3s}\big),\ \forall |\alpha|\leq s\right\},
\end{align*}
with $\cG_s$ defined analogously. We conclude that for each $\mathbf{A}\in\cD_M$, any smooth potentials $(\varphi,\psi)$ for the corresponding EGW problem satisfy $(1+\tilde{\sigma}^{5s})^{-1}\varphi\in\cF_s$ and $(1+\tilde{\sigma}^{5s})^{-1}\psi\in\cG_s$. This regularity of potentials will be used to derive the parametric rate of convergence for empirical $\EGW^2_{1}$, following the decomposition presented in the next part. 

\medskip
\paragraph{(iii) \underline{Decomposition into suprema of empirical processes}}
% \noindent\textit{(iii) Decomposition into suprema of empirical processes.} 
We upper bound the empirical estimation error of $\EGW^2_{1}$ by the suprema of empirical processes indexed by optimal potentials. To simplify notation, recall the shorthand $\rho \varphi\coloneqq \int \varphi d \rho$ for any signed Borel measure $\rho$. Starting from~\eqref{eq:sample_complex_EOT}, we have 
\begin{align*}
    \big|\EGW^2_{1}(\mu,\nu)-\EGW^2_{1}(\hat{\mu}_n,\hat{\nu}_n)\big| &\leq \sup_{\mathbf{A}\in \cD_{M}} \big|\mathsf{OT}_{\mathbf{A},1}(\mu,\nu)-\mathsf{OT}_{\mathbf{A},1}(\hat{\mu}_n,\hat{\nu}_n)\big|\\
    &\lesssim \big(1+\tilde{\sigma}^{5s}\big)\bigg(\sup_{\varphi\in\cF_s}\big|(\mu-\hat{\mu}_n)\varphi\big| + \sup_{\psi\in\cG_s}\big|(\nu-\hat{\nu}_n)\psi\big|\bigg),\numberthis\label{eq:sample_complex_emp_process}
\end{align*}
where the second inequality follows by \cite[Proposition 2]{mena2019statistical}, which uses the fact that the optimal EOT potentials between $(\mu,\nu)$, $(\hat{\mu}_n,\nu)$, and $(\hat{\mu}_n,\hat{\nu}_n)$ belong to $\cF_s\times\cG_s$. We have also used the fact that \cref{lemma:smoothness_of_EGW_potential} holds uniformly in $\mathbf{A}\in\cD_M$ to remove the supremum.

\medskip

\paragraph{(iv) \underline{Sample complexity analysis}}
% \noindent\textit{(iv) Sample complexity analysis.} 
We are now in place to establish that $\EGW^2_{1}(\hat{\mu}_n,\hat{\nu}_n)$ converges towards $\EGW^2_{1}(\mu,\nu)$ at the parametric rate. More specifically, we will show    
\begin{equation}
    \EE\Big[\big|\EGW^2_{1}(\mu,\nu)-\EGW^2_{1}(\hat{\mu}_n,\hat{\nu}_n)\big|\Big]\lesssim_{d_x,d_y}   \left(1+\sigma^{7\left\lceil\frac{d_x\vee d_y}{2}\right\rceil+9}\right)n^{-\frac 12}.\label{eq:sample_complex_to_show}
\end{equation}
Starting from the RHS of \eqref{eq:sample_complex_emp_process}, we present the analysis of the first supremum, with the second one being treated similarly. We bound it as 
\begin{equation}
\EE\left[\big(1+\tilde{\sigma}^{5s}\big)\sup\nolimits_{\varphi\in\cF_s}\big|(\mu-\hat{\mu}_n)\varphi\big|\right]\leq \sqrt{\EE\left[\big(1+\tilde{\sigma}^{5s}\big)^2\right]\EE\left[\left(\sup\nolimits_{\varphi\in\cF_s}(\mu-\hat{\mu}_n)\varphi\right)^2\right]},\label{eq:sample_complex_UB}
\end{equation}
and proceed to bound the second term. By Theorem 3.5.1. from \cite{gine2016mathematical}, we have 
\begin{equation}
\EE\left[\left(\sup_{\varphi\in\cF_s} (\mu-\hat{\mu}_n)\varphi\right)^2\right] \lesssim_{d_x} \frac{1}{n} \EE\left(\int_0^{\sqrt{\max_{\varphi\in\cF_s} \|\varphi\|^2_{L^2(\hat{\mu}_n)} }}\mspace{-4mu} \sqrt{\log\Big( 2N\big(\xi,\cF_s,L^2(\hat{\mu}_n)\big)\Big) }  d \xi \right)^{\mspace{-3mu}2}\mspace{-7mu}.\label{eq:sample_complex_decomp}
\end{equation}
The integration domain is bounded by observing that
\[
    \max_{\varphi\in\cF_s} \|\varphi\|^2_{L^2(\hat{\mu}_n)}\leq C_{d_x,d_y}\frac{1}{n} \sum_{i=1}^n (1+\|x_i\|^8)\leq C_{d_x,d_y} \big(1+\sigma^4L\big)
\]
where $L\coloneqq \frac{1}{n}\sum_{i=1}^n e^{\frac{\|x_i\|^4}{2\sigma^2}}$ satisfies $\EE[L]\leq2$. To control the integrand, we apply Corollary 2.7.4. from \cite{van1996weak}
(see also \cite{mena2019statistical} Proposition 3) as follows. First, define $Q_j\coloneqq  [-2^j\sqrt{\sigma},2^j\sqrt{\sigma}]^{d_x}$ for $j\in\NN_0$, and partition $\RR^{d_x}$ into the sets $I_j = Q_j\backslash Q_{j-1}$. Note that the
Lebesgue measure of each $\big\{x\in\RR^{d_x}:\, \|x-I_j\|\leq1\big\}$ is bounded by $C_{d_x}(1+2^{jd_x}\sigma^{d_x/2})$, and by Markov's inequality we further obtain $\hat{\mu}_n(I_j)\leq Le^{-2^{4j-5}}$. Lastly, for any $j\in\NN_0$ and $\varphi\in\cF_s$, the restriction $\varphi|_{I_j}$ has a $\cC^s(I_j)$-H\"older norm bounded by $C_{s,d_x}(1+\sigma^{3s/2}) 2^{3js}$. This verifies the conditions of \cite[Corollary 2.7.4.]{van1996weak}, which we invoke with $s=\lceil d_x/2 \rceil+1$, $V=d_x/s$, and $r=2$, to get
\begin{align*}
    \log N\big(&\xi,\cF_s,L^2(\hat{\mu}_n)\big)\\
    &\leq \log N_{[\  ]}\big(2\xi,\cF_s,L^2(\hat{\mu}_n)\big)\\
    &\leq C_{d_x,d_y} \xi^{-V} L^{\frac{V}{r}} \left(  \sum_{j=0}^\infty \left(1+2^{jd_x}\sigma^{\frac{d_x}{2}}\right)^{\frac{r}{V+r}}    \big(e^{-2^{4j-5}}\big)^{\frac{V}{V+r}} \left(\big(1+\sigma^{\frac{3s}{2}}) 2^{3js}\right)^{\frac{Vr}{V+r}}\right)^{\frac{V+r}{r}}\\
    &\leq C_{d_x,d_y} \xi^{-V} L^{\frac{V}{r}} \left(1+\sigma^{\frac{d_x+3sV}{2}}\right) \left(  \sum_{j=0}^\infty   e^{-\frac{2^{4j-5} V}{V+r}}  2^{\frac{(3sV+d_x)jr}{V+r}}  \right)^{\frac{V+r}{r}}\\
    &\leq C_{d_x,d_y} \xi^{-\frac{d_x}{s}} L^{\frac{d_x}{2s}} \big(1+\sigma^{2d_x}\big)
\end{align*}
where the last line follows because the summation is finite and only depends on $d_x$. Inserting this bound back into \eqref{eq:sample_complex_decomp}, we have
\begin{align*}
\EE\left[\left(\sup\nolimits_{\varphi\in\cF_s} (\mu-\hat{\mu}_n)\varphi\right)^2\right]  &\lesssim_{d_x,d_y} \frac{1}{n} \EE\left[\left( \int_0^{\sqrt{ 1+\sigma^4L}} \sqrt{ \xi^{-\frac{d_x}{s}} L^{\frac{d_x}{2s}} \big(1+\sigma^{2d_x}\big)}  d \xi \right)^2\right]\\
    &\lesssim_{d_x,d_y} \frac{(1+\sigma^{2d_x})}{n} \EE \left[L^{\frac{d_x}{2s}} (1+\sigma^4L)^{1-\frac{d_x}{2s}}\right] \\
    &\lesssim_{d_x,d_y} \big(1+\sigma^{2(2+d_x)}\big)n^{-1}.
\end{align*}
In light of \eqref{eq:sample_complex_UB}, it remains to bound the appropriate moment of $\tilde{\sigma}$. For any $k\in\NN$, set
\[
\tau_k^2 = \sigma^2\vee\left(\frac{k\sigma^2}{n}\sum_{i=1}^n e^{\frac{\|x_i\|^4}{2k\sigma^2}}\right)\vee\left(\frac{k\sigma^2}{n}\sum_{i=1}^n e^{\frac{\|y_i\|^4}{2k\sigma^2}}\right)
\]
so that $\mu,\nu,\hat{\mu}_n,\hat{\nu}_n$ are all 4-sub-Weibull with parameter $\tau_k^2$; cf. \cite[Lemma 4]{mena2019statistical}. Therefore
\[\EE[\tilde{\sigma}^{2k}]\leq \EE[\tau_k^{2k}] \leq \sigma^{2k} + \frac{k^k \sigma^{2k}}{n}\EE\left[ \sum_{i=1}^n e^{\frac{\|x_i\|^4}{2\sigma^2}} + e^{\frac{\|y_i\|^4}{2\sigma^2}} \right] \leq \big(1+4k^k\big)\sigma^{2k}.\]

Combining all the pieces leads to:
\[
\EE\left[\big(1+\tilde{\sigma}^{5s}\big)\sup\nolimits_{\varphi\in\cF_s}(\mu-\hat{\mu}_n)\varphi\right]\mspace{-3mu}\lesssim_{d_x,d_y} \mspace{-5mu}\sqrt{\EE\left[\big(1+\tilde{\sigma}^{5s}\big)^2\right]\mspace{-3mu}\frac{(1+\sigma^{2d_x+4})}{n}}\mspace{-3mu}\lesssim\mspace{-3mu}\frac{(1+\sigma^{9\lceil d_x/2\rceil+11})}{\sqrt{n}} 
\]
with a similar bound holding for the corresponding term with $\cF_s$ replaced by $\cG_s$. Together with \eqref{eq:sample_complex_emp_process}, these two bounds imply \eqref{eq:sample_complex_to_show}.\qed

\subsection{Proof of Theorem \ref{thm:GW_sample_complex}} \label{sec:GW_sample_complex_proof}

 {

\medskip
\paragraph{\textbf{Upper bounds}}
% \subsubsection{Upper bounds}

 {We maintain our convention of suppressing the subscript $\mathbf{A}$ from our notation for optimal dual potentials for the OT problem with cost $c_\mathbf{A}$, simply writing $(\varphi,\psi)$.} As in the proof of \cref{thm:egw_sample_complex}, we only prove the two-sample case. The one-sample result follows similarly. Derivations of technical lemmas stated throughout this proof are deferred to \cref{appen:proof:GW_sample_complex_lemmas}.

\medskip

Assume w.l.o.g. that $\mu,\nu$ are centered and recall that we have the decomposition $\GW(\mu,\nu)^2=\EGW^1(\mu,\nu)+\EGW^2(\mu,\nu)$. To split our sample complexity analysis into those of $\EGW^1$ and $\EGW^{2}$, we again need to account for the fact that empirical measures are generally not centered. Let $\tilde{\mu}_n$ and $\tilde{\nu}_n$ be centered versions of the empirical measures $\hat{\mu}_n$ and $\hat{\nu}_n$, respectively. Following the same steps leading to \eqref{eq:sample_complex_bias_S1} and \eqref{eq:sample_complex_bias_S2}, we observe that 
\[
    \EE\big[\big|\EGW^1(\hat{\mu}_n,\hat{\nu}_n)-\EGW^1(\tilde{\mu}_n,\tilde{\nu}_n)\big|\big] \vee    \EE\big[\big|\EGW^2(\hat{\mu}_n,\hat{\nu}_n)-\EGW^2(\tilde{\mu}_n,\tilde{\nu}_n)\big|\big] \lesssim \frac{R^4}{\sqrt{n}},
    \]
which also uses the fact that any distribution whose support diameter is bounded by $R$ is trivially 4-sub-Weibull with parameter $R^4$. Consequently, we may split
\begin{equation}
\EE\big[\big|\GW(\mu,\nu)^2\mspace{-2mu}-\mspace{-2mu}\GW(\hat{\mu}_n,\hat{\nu}_n)^2\big|\big]\mspace{-2mu}\leq\mspace{-2mu}\EE\big[\big|\EGW^1(\mu,\nu)\mspace{-2mu}-\EGW^1(\hat{\mu}_n,\hat{\nu}_n)\big|\big]\mspace{-2mu} + \EE\big[\big|\EGW^2(\mu,\nu)\mspace{-2mu}-\EGW^2(\hat{\mu}_n,\hat{\nu}_n)\big|\big]\mspace{-2mu}+\frac{R^4}{\sqrt{n}},\label{eq:GW_sample_complex_bias}
\end{equation}
and proceed with a separate analysis for $\EGW^1$ and $\EGW^2$.

\medskip
For $\EGW^1$, we follow the steps leading to \eqref{eq:S1_decomposition} in the EGW sample complexity analysis and use the fact that $\mu,\nu$ are 4-sub-Weibull with parameter $R^4$ to deduce
\begin{equation}
        \EE\big[\big|\EGW^1(\hat{\mu}_n,\hat{\nu}_n)-\EGW^1(\mu,\nu)\big|\big] \lesssim  \frac{R^4}{\sqrt{n}}.\label{eq:GW_sample_complex_S1}
\end{equation}
To treat $\EGW^2$, we start from the variational representation from \cref{cor:gw_duality} 
and choose $M=R^2\geq M_{\mu,\nu}$, which is evidently feasible. Invoking this result, we obtain 
\begin{equation}
    \big|\EGW^2(\mu,\nu)-\EGW^2(\hat{\mu}_n,\hat{\nu}_n)\big| \leq \sup_{\mathbf{A}\in\cD_{R^2}}\big|\mathsf{OT}_{\mathbf{A}}(\mu,\nu) - \mathsf{OT}_{\mathbf{A}}(\hat{\mu}_n,\hat{\nu}_n)\big|,\label{eq:GW_sample_complex_S2}
\end{equation}
and proceed to show that for any $\mathbf{A}\in\cD_{R^2}$, corresponding optimal dual potentials can be restricted to concave Lipschitz functions and their $c$-transforms (w.r.t. the cost function $c_\mathbf{A}$). 

\medskip
\paragraph{(i) \underline{Smoothness of OT potentials}}
% \noindent\textit{(i) Smoothness of OT potentials.}
Let
\begin{align*}
    \cF_R\coloneqq \left\{\varphi:B_{d_x}(0,R)\to\RR:\,
    \begin{array}{cc}
          \varphi\text{ concave, }\ \|\varphi\|_{\infty}\leq 1+10\big(1+4\sqrt{d_x d_y}\big)R^4, \\
          \quad\quad\quad\mspace{2mu}\|\varphi\|_{\lip}\leq8\big( 1+ 2\sqrt{d_x d_y}\big)R^3  \phantom{\bigg|}
    \end{array}\right\}
\end{align*}
and define $\cG_R$ analogously over $B_{d_y}(0,R)$. Recall that the $c$-transform of $\varphi:\RR^{d_x}\to\RR$ w.r.t.  $c_\mathbf{A}$ is a new function $\varphi^c:\RR^{d_y}\to\RR$, given by 
$\varphi^c = \inf_{x\in \cX} c_{\mathbf{A}}(x,\cdot) - \varphi(x)$. The~next lemma allows restricting the set of optimal dual potentials for $\mathsf{OT}_{\mathbf{A}}(\mu,\nu)$ to pairs $(\varphi,\varphi^c)\in \cF_R\times \cG_R$. 

\begin{lemma}[Uniform regularity of OT potentials]
\label{lemma:smoothness_of_GW_potential}
Fix $R>0$ and suppose that $(\mu,\nu)\in \cP(\cX)\times  \cP(\cY)$, with $\cX\subset B_{d_x}(0,R)$ and $\cY\subset B_{d_y}(0,R)$. Then, for any $\mathbf{A}\in\cD_{R^2}$, there exist $\varphi\in\cF_R$ with $\varphi^c\in\cG_R$, such that $(\varphi,\varphi^c)$ is a pair of optimal dual potentials for $\mathsf{OT}_{\mathbf{A}}(\mu,\nu)$. 
\end{lemma}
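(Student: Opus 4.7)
The plan rests on two structural facts about the cost $c_\mathbf{A}(x,y)=-4\|x\|^2\|y\|^2-32 x^\intercal\mathbf{A}y$: for each fixed $y$, $c_\mathbf{A}(\cdot,y)$ is concave on $\RR^{d_x}$ (since $\|\cdot\|^2$ enters with a negative coefficient and the bilinear term is linear in $x$), and on $B_{d_x}(0,R)\times B_{d_y}(0,R)$ its gradient obeys
\[
\|\nabla_x c_\mathbf{A}(x,y)\|\le 8\|y\|^2\|x\|+32\|\mathbf{A}\|_\op\|y\|\le 8R^3+32\cdot\tfrac{R^2}{2}\sqrt{d_xd_y}\cdot R=8\big(1+2\sqrt{d_xd_y}\big)R^3,
\]
using $\|\mathbf{A}\|_\op\le\|\mathbf{A}\|_\F\le\tfrac{R^2}{2}\sqrt{d_xd_y}$ for $\mathbf{A}\in\cD_{R^2}$. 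The analogous concavity and gradient bound hold with $x$ and $y$ interchanged, and $\|c_\mathbf{A}\|_\infty\le 4(1+4\sqrt{d_xd_y})R^4$ on the same product of balls.

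The first step is to invoke OT strong duality for the continuous, bounded cost $c_\mathbf{A}$ on the compact set $\cX\times\cY$ to produce any optimal pair $(\varphi_0,\psi_0)\in C_b(\cX)\times C_b(\cY)$. I then double-transform: set $\psi\coloneqq\varphi_0^c$ on $\RR^{d_y}$ and $\varphi\coloneqq\psi^{\bar c}$ on $\RR^{d_x}$; standard properties of $c$-transforms (\cite[Thm.~5.10]{villani2009optimal}) imply that $(\varphi,\psi)$ is again a pair of optimal dual potentials and $\psi=\varphi^c$. Crucially, $\varphi$ is expressed as the pointwise infimum over $y\in\cY$ of the family $x\mapsto c_\mathbf{A}(x,y)-\psi(y)$, each concave in $x$; hence $\varphi$ is concave on all of $\RR^{d_x}$ and, by the gradient bound above, Lipschitz on $B_{d_x}(0,R)$ with constant at most $8(1+2\sqrt{d_xd_y})R^3$, matching the Lipschitz constant in the definition of $\cF_R$. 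The symmetric argument, applied to $\varphi^c$, places it in the Lipschitz-and-concave class underlying $\cG_R$.

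To control the $L^\infty$ norms I exploit the freedom to shift $(\varphi,\psi)\mapsto(\varphi+a,\varphi^c-a)$ without affecting optimality or the $c$-transform relation. Choosing $a$ so that $\varphi$ is centered on $B_{d_x}(0,R)$, e.g.\ $a=-\tfrac12\big(\sup_{B_{d_x}(0,R)}\varphi+\inf_{B_{d_x}(0,R)}\varphi\big)$, yields
\[
\|\varphi\|_\infty\le R\cdot\|\varphi\|_{\lip}\le 8\big(1+2\sqrt{d_xd_y}\big)R^4,
\]
which is dominated by $1+10(1+4\sqrt{d_xd_y})R^4$. For $\varphi^c$, feasibility $\varphi^c(y)\le c_\mathbf{A}(x,y)-\varphi(x)$ combined with its definition as an infimum of the same quantity gives the two-sided bound $\|\varphi^c\|_\infty\le\|c_\mathbf{A}\|_\infty+\|\varphi\|_\infty$, which, with the constants computed above, is again dominated by the bound defining $\cG_R$.

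The only real obstacle is bookkeeping: tracking the dependence of constants on $R$ and $\sqrt{d_xd_y}$ so that the final estimates are absorbed by the precise numerical prefactors in $\cF_R$ and $\cG_R$. The concavity and Lipschitzness are immediate from the structural observations about $c_\mathbf{A}$, and the $L^\infty$ control requires only a single recentering plus the feasibility inequality; the conclusion $(\varphi,\varphi^c)\in\cF_R\times\cG_R$ follows.
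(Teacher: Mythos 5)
Your plan is correct and takes essentially the same route as the paper: concavity of $\varphi$ and $\varphi^c$ follows from the marginal concavity of $c_\mathbf{A}$ via the $c$-transform representation, Lipschitzness with constant $8(1+2\sqrt{d_xd_y})R^3$ comes from the same gradient bound on $c_\mathbf{A}$, and the sup-norm is controlled by exploiting the additive translation freedom. The only minor cosmetic difference is the normalization step---the paper shifts so that $\varphi$ and $\psi$ agree at a well-chosen point $(x_0,y_0)$ found from $\mathsf{OT}_{\mathbf{A}}(\mu,\nu)\geq -\|c_\mathbf{A}\|_\infty$, whereas you recenter $\varphi$ so its max and min on $B_{d_x}(0,R)$ are symmetric and then push the bound to $\varphi^c$ via feasibility---and both land within the prefactors defining $\cF_R$ and $\cG_R$.
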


The proof, which is given in \cref{appen:proof:lemma:smoothness_of_GW_potential}, arrives at the above properties by exploiting concavity of $c_{\mathbf{A}}$ and the $c$-transform representation of optimal dual pairs.

\medskip
\paragraph{(ii) \underline{Sample complexity analysis}}
% \noindent\textit{(ii) Sample complexity analysis.} 
Equipped with \cref{lemma:smoothness_of_GW_potential}, we are ready to conduct the sample complexity analysis. Suppose w.l.o.g. that $d_x\leq d_y$; otherwise, flip their roles in the derivation below. For each $\mathbf{A}\in\cD_{R^2}$, let $\Phi_{\mathbf{A}}$ be the class of of optimal dual potential pairs for $\mathsf{OT}_{\mathbf{A}}(\mu,\nu)$ (see \eqref{eq:OT_dual}). 
Define $\cF_\mathbf{A}\coloneqq\mathrm{proj}_{\cF_R}\big(\Phi_\mathbf{A} \cap (\cF_R\times\cG_R)\big)$ and let $\cF_\mathbf{A}^{\mspace{1mu}c}$ be its $c$-transform w.r.t. $c_\mathbf{A}$. 
We may now further upper bound the RHS of \eqref{eq:GW_sample_complex_S2}, to arrive at
\begin{align*}
    \EE\Big[\big|\EGW^2(\mu,\nu)-\EGW^2(\hat{\mu}_n,\hat{\nu}_n)\big|\Big] 
    &\leq \EE\left[\sup_{\varphi\in\cup_{\mathbf{A}}\cF_\mathbf{A}}\big|(\mu-\hat{\mu}_n)\varphi\big|\right] +\EE\left[\sup_{\psi\in\cup_\mathbf{A}\cF_\mathbf{A}^{\mspace{1mu}c}}\big|(\nu-\hat{\nu}_n)\psi\big|\right].\numberthis\label{eq:GW_sample_complex_S2_emp}
\end{align*}

As \cref{lemma:smoothness_of_GW_potential} implies that $\cup_\mathbf{A}\cF_\mathbf{A}\subset\cF_R$, the first term above is controlled by the expected supremum of an empirical process indexed by $\cF_R$. Dudley's entropy integral formula yields 
\begin{align*}
\EE\left[\sup_{\varphi\in\cF_R} \big|(\mu-\hat{\mu}_n)\varphi\big|\right] \lesssim \inf_{\alpha>0} \alpha + \frac{1}{\sqrt{n}} \int_\alpha^{2\sup_{\varphi\in\cF_R} \|\varphi\|_\infty} \sqrt{\log N(\xi,\cF_R,\|\cdot\|_\infty)}  d \xi.
\end{align*}
Theorem 1 from \cite{guntuboyina2012l1} provides a bound on the metric entropy of bounded, convex, Lipschitz functions, whereby if $\widetilde{\cF}_d\coloneqq \left\{f:B_{d}(0,1)\to\RR: f\text{ convex, } \|f\|_{\infty}\vee\|f\|_\lip\leq1\right\}$, then $\log N(\xi,\tilde\cF_{d},\|\cdot\|_\infty) \leq C_{d}\,\xi^{-\frac{d}{2}}$. For any $\varphi:\RR^d\to \RR$, define its rescaled version\footnote{With some abuse of notation, we apply this re-scaling transform to functions defined on spaces of possibly different dimensions without explicitly reflecting this in the notation.} $(S\varphi)(z)\coloneqq \varphi(Rz)/(1+C_{d_x,d_y} R^4)$, where $C_{d_x,d_y} = 10\big(1+4\sqrt{d_x d_y}\big)$, and note that $S\varphi\in\widetilde\cF_{d_x}$, for any $\varphi\in\cF_R$.
We also define the map $s:x\mapsto x/R$. Combining the above, for $d_x\geq 4$, we have
\begin{align*}
\EE\left[\sup_{\varphi\in\cF_R} \big|(\mu-\hat{\mu}_n)\varphi\big|\right]
    &\lesssim_{d_x,d_y} (1+ R^4)\EE\left[\sup_{\varphi\in\cF_R} \big|\big(s_\sharp\mu-s_\sharp\hat{\mu}_n\big)(S\varphi)\big|\right]\\
    &\lesssim_{d_x,d_y} (1+R^4)\left( \inf_{\alpha>0} \alpha + \frac{1}{\sqrt{n}} \int_\alpha^{2} \xi^{-\frac{d_x}{4}}  d \xi\right)\\
    &\lesssim_{d_x,d_y} (1+R^4)n^{-\frac{2}{d_x}}(\log n)^{\mathds{1}_{\{d_x=4\}}}.
\end{align*}
When $d_x<4$, the entropy integral is finite and we may pick $\alpha=0$. Hence, in this case, $\cF_R$ is a Donsker class and the resulting convergence rate is parametric $n^{-1/2}$. Altogether, we have
\begin{equation}
\EE\left[\sup_{\varphi\in\cup_\mathbf{A}\cF_\mathbf{A}}\big|(\mu - \hat{\mu}_n)\varphi\big|\right]\lesssim_{d_x,d_y}\big(1+R^4\big)n^{-\frac{2}{d_x\vee 4}}(\log n)^{\mathds{1}_{\{d_x=4\}}}.\label{eq:GW_sample_complex_term1}
\end{equation}

We now move to treat the second term on the RHS of \eqref{eq:GW_sample_complex_S2_emp}. First, observe that one may control it by the expected supremum of an empirical process indexed by $\cG_R$, which~is bounded by $\big(1+R^4\big)n^{-2/(d_y\vee 4)}(\log n)^{\mathds{1}_{\{d_y=4\}}}$ via similar steps as above. Together with \eqref{eq:GW_sample_complex_term1}, this would yield a two-sample empirical convergence rate bound of $n^{-2/(d_x\vee d_y\vee 4)}(\log n)^{\mathds{1}_{\{d_x\vee d_y=4\}}}$ for the squared $(2,2)$-GW distance. However, we aim to arrive at an upper bound that depends on the smaller dimension $d_x \wedge d_y$, as opposed to the larger one. As pointed out in Remark 5.6 of \cite{groppe2023lower}, this is possible by employing the LCA principle from \cite[Lemma 2.1]{hundrieser2022empirical}, which states that 
for any cost function $c$ and function class $\cF$, we have $N(\xi,\cF^c,\|\cdot\|_\infty)\leq N(\xi,\cF,\|\cdot\|_\infty)$. Starting from a rescaling step as before, we obtain
\begin{equation}
\EE\left[\sup_{\psi\in\cup_\mathbf{A}\cF_\mathbf{A}^{\mspace{1mu}c}}\big|(\nu-\hat{\nu}_n)\psi\big|\right]\lesssim_{d_x,d_y} (1+ R^4)\EE\left[\sup_{\psi\in \cup_\bA \cF_\bA^{\mspace{1mu}c}} \big|\big(s_\sharp\nu-s_\sharp\hat{\nu}_n\big)(S\psi)\big|\right].\label{eq:GW_sample_complexity_term2}
\end{equation}
Using the LCA principle, we have the following bound on the covering number of the union of rescaled $c$-transformed classes.

\begin{lemma}\label{lemma:covering}For any $\xi>0$, we have the covering bound
\[
N\left(\xi,\cup_{\bA\in\cD_{R^2}} S(\cF_\bA^{\mspace{1mu}c}),\|\cdot\|_\infty\right) \leq N\left(\frac{\xi}{64R^2},\cD_{R^2},\|\cdot\|_\op\right) N\left(\frac{\xi}{2},\widetilde\cF_{d_x},\|\cdot\|_\infty\right).
\]
\end{lemma}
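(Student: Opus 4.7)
The plan is to build the $\xi$-cover in two stages: first discretize the matrix set $\cD_{R^2}$ in operator norm, then, for each discretization point, apply the LCA principle to cover the associated rescaled $c$-transform class by a cover of $\widetilde\cF_{d_x}$. The factor $64$ versus $32$ in the statement accommodates splitting the budget $\xi$ into two pieces of size $\xi/2$ absorbed by these two stages.

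The first ingredient is a cost stability estimate: since $c_\bA(x,y)-c_{\bA'}(x,y)=-32\,x^\intercal(\bA-\bA')y$ and $\cX\times\cY\subset B_{d_x}(0,R)\times B_{d_y}(0,R)$, the Cauchy--Schwarz inequality gives $\|c_\bA-c_{\bA'}\|_\infty\le 32R^2\|\bA-\bA'\|_\op$. The elementary bound $\|\varphi^{c_\bA}-\varphi^{c_{\bA'}}\|_\infty\le\|c_\bA-c_{\bA'}\|_\infty$, which follows directly from the defining infimum, together with the fact that $S$ is a contraction in $\|\cdot\|_\infty$, then yields $\|S(\varphi^{c_\bA})-S(\varphi^{c_{\bA'}})\|_\infty\le 32R^2\|\bA-\bA'\|_\op$ for every $\varphi$. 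Taking an operator-norm cover of $\cD_{R^2}$ at radius $\xi/(64R^2)$, with centers $\bA_1,\dots,\bA_N$, thus handles the first $\xi/2$ of the approximation for every element $\psi=S(\varphi^{c_\bA})$ of the LHS, since $\cF_\bA\subset\cF_R$ in particular.

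For each center $\bA_i$, I would then cover the set $\{S(\varphi^{c_{\bA_i}}):\varphi\in\cF_R\}$ at scale $\xi/2$. A change of variables $x=Ru$, $y=Rz$ in the infimum defining the $c$-transform shows that rescaling commutes with $c$-transformation: $S(\varphi^{c_{\bA_i}})=(S\varphi)^{\tilde c_{\bA_i}}$, where $\tilde c_{\bA_i}(u,z)\coloneqq c_{\bA_i}(Ru,Rz)/(1+C_{d_x,d_y}R^4)$. Hence this set equals $S(\cF_R)^{\tilde c_{\bA_i}}$, and the LCA principle bounds its $\|\cdot\|_\infty$-covering number by that of $S(\cF_R)$ itself. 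Since the $L^\infty$ and Lipschitz constants in the definition of $\cF_R$ inherited from \cref{lemma:smoothness_of_GW_potential} are calibrated so that $\pm S(\cF_R)\subset\widetilde\cF_{d_x}$ (and sign flips do not affect covering numbers), this stage is controlled by $N(\xi/2,\widetilde\cF_{d_x},\|\cdot\|_\infty)$. Combining the two stages via the triangle inequality produces a cover of cardinality $N(\xi/(64R^2),\cD_{R^2},\|\cdot\|_\op)\cdot N(\xi/2,\widetilde\cF_{d_x},\|\cdot\|_\infty)$ at radius $\xi/2+\xi/2=\xi$, which is the claim.

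The main subtlety I expect lies in the second stage, namely verifying that $S$ and the $c$-transform commute up to absorbing a scalar into a new cost, so that the LCA principle actually applies on the rescaled side, and that $S$ maps $\cF_R$ into a signed copy of $\widetilde\cF_{d_x}$. Both reduce to elementary inequalities that hold uniformly in $R$ thanks to how the constants in $\cF_R$ were chosen; once these are in place, the combined product estimate follows with no further effort.
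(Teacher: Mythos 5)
Your proposal is correct and follows essentially the same two-stage strategy as the paper: discretize $\cD_{R^2}$ in operator norm at scale $\xi/(64R^2)$ using the cost stability bound $\|c_\bA - c_{\bA'}\|_\infty \le 32R^2\|\bA-\bA'\|_\op$ together with the $c$-transform contraction, then cover the rescaled $c$-transformed class for each fixed $\bA_i$ via the LCA principle applied after the rescaling. The paper packages the two steps into a single explicit candidate family $g_{i,j}(y)=S\big[\inf_x(c_{\bA_j}(x,y)-(S^{-1}\varphi_i)(x))\big]$, whereas you present the cover more modularly and make explicit the commutation $S(\varphi^{c_{\bA_i}})=(S\varphi)^{\tilde c_{\bA_i}}$ under the rescaled cost; these are minor presentational differences, and the key ingredients and the budget split $\xi/2 + \xi/2$ are identical.
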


Armed with the lemma, we proceed from \eqref{eq:GW_sample_complexity_term2} and, for $d_x\geq 4$, obtain
\begin{align*}
\EE\left[\sup_{\psi\in\cup_\bA\cF_\bA^c} \big|(\nu-\hat{\nu}_n)\psi\big|\right] 
    &\lesssim_{d_x,d_y} (1+R^4)\left( \inf_{\alpha>0} \alpha + \frac{1}{\sqrt{n}} \int_\alpha^{2} \xi^{-\frac{d_x}{4}} + \log\frac{R^4}{\xi} d \xi\right)\\
    &\lesssim_{d_x,d_y} (1+R^4)n^{-\frac{2}{d_x}}(\log n)^{\mathds{1}_{\{d_x=4\}}}.
\end{align*}
As before, when $d_x<4$, a parametric rate bound holds instead. Inserting the above along with 
\eqref{eq:GW_sample_complex_S2_emp} into \eqref{eq:GW_sample_complex_term1} concludes the proof of the two-sample upper bound for the squared distance.

\medskip
Lastly, observe that if $\GW(\mu,\nu)>0$, then the two-sample rate for $\GW(\mu,\nu)^2$ readily extends to $\GW(\mu,\nu)$, since $\EE\big[\big|\GW(\mu,\nu)-\GW(\hat{\mu}_n,\hat{\nu}_n)\big|\big]\leq \GW(\mu,\nu)^{-1}\EE\big[\big|\GW(\mu,\nu)^2-\GW(\hat{\mu}_n,\hat{\nu}_n)^2\big|\big]$, and similarly for the one-sample case. We note, however, that unlike the bounds for $\GW^2$, this  bound is not uniform over pairs of distributions with compact supports.

\paragraph{\textbf{Lower bounds}}
% \subsubsection{Lower bounds} 
We now move to establish the lower bounds. As the parametric lower bound of $n^{-1/2}$ trivially holds for our problem, we assume w.l.o.g. that $4<d_x\leq d_y$ and $R=4$.\footnote{To treat general $R$, one only needs to include a factor of $R^4/256$ in front of the one- and two-sample~errors. 
} Denoting $d\coloneqq d_x$, we shall construct compactly supported distributions $\mu,\nu\in \RR^{d}$ with the desired $n^{-2/d}$ empirical convergence rate lower bound. This is sufficient since lower-dimensional distributions can be canonically embedded into higher dimensions without changing the value of $\GW$. As the lower bound holds for $n$ sufficiently large, we occasionally absorb terms of order $O(1/n)$, $O(1/\sqrt{n})$ and $O(\sqrt{\log(n)/n})$ into the $n^{-2/d}$ convergence rate. Consider the uniform distributions $\mu=\mathrm{Unif}\big(B_d(0,1)\big)$ and $\nu=\mathrm{Unif}\big(B_d(0,2)\big)$. 

\medskip

We start from the one-sample case and establish $\EE\big[\big|\GW(\mu,\nu)^2-\GW(\hat{\mu}_n,\nu)^2\big|\big]\geq n^{-2/d}$.~Theorem 9.21 of \cite{sturm2012space} implies that $T: x\mapsto 2x$ is an optimal Gromov-Monge map from $\mu$ and $\nu$, and thus $\GW(\mu,\nu)^2 = \int_{\cX\times\cX} \big|\|x-x'\|^2-\|2x-2x'\|^2\big|^2 d\mu\otimes\mu(x,x')$. Let $\pi_n\in\Pi(\hat{\mu}_n,\nu)$ be an optimal coupling for $\GW(\hat{\mu}_n,\nu)$ and notice that $\pi'_n = (\mathrm{id},\cdot/2)_\sharp\pi_n\in\Pi(\hat{\mu}_n,\mu)$ is optimal for $\GW(\hat{\mu}_n,\mu)$. By completing the square, we then have
\begin{align*}
\GW(\hat{\mu}_n,\nu)^2 
&= \int \big|\|y-y'\|^2-\|z-z'\|^2\big|^2 d\pi_n\otimes\pi_n(y,z,y',z') \\
& = \int \big|\|y-y'\|^2-\|2x-2x'\|^2\big|^2  d \pi'_n\otimes\pi'_n(y,x,y',x')\\
&= 4\GW(\hat{\mu}_n,\mu)^2 - 3\int \|y-y'\|^4 d \hat{\mu}_n\otimes \hat{\mu}_n(y,y') + 12\int \|x-x'\|^4 d \mu\otimes \mu(x,x').\numberthis \label{eq:GW_LB_1sample}
\end{align*}
Combining this with the above expression for $\GW(\mu,\nu)^2$, we obtain
\begin{align*}
    &\EE\big[\big|\GW(\mu,\nu)^2-\GW(\hat{\mu}_n,\nu)^2\big|\big ]\\
    &\geq 4\EE\big[\GW(\hat{\mu}_n,\mu)^2\big]+ 3\EE\left[\int \|x-x'\|^4 d \mu\otimes\mu(x,x') - \int \|y-y'\|^4 d \hat{\mu}_n\otimes \hat{\mu}_n(y,y')\right].
\end{align*}
Evidently, the second term decays as $n^{-1}$ since 
\[\EE\left[\int \|y-y'\|^4 d \hat{\mu}_n\otimes \hat{\mu}_n(y,y')\right] - \int \|x-x'\|^4 d \mu\otimes\mu(x,x')=\frac 1n\int \|x-x'\|^4 d \mu\otimes\mu(x,x').\]
For the first term, let $\tilde{\mu}_n$ be the centered version of $\hat{\mu}_n$ and invoke  \cref{lemma:equivalence_gw_w} to obtain
\begin{align*}
    \EE\big[\GW^2(\hat{\mu}_n,\mu)\big]&=\EE\big[\GW^2(\tilde{\mu}_n,\mu)\big]\\
    &\gtrsim  \lambda_{\mathrm{min}}(\bsigma_\mu)\inf_{\mathbf{U}\in O(d)} \mathsf{W}_2(\tilde{\mu}_n,\mathbf{U}_\sharp\mu)^2\\
    &=\lambda_{\mathrm{min}}(\bsigma_\mu)\,\EE\big[\sW_2(\tilde{\mu}_n,\mu)^2\big]\\
    &\geq\lambda_{\mathrm{min}}(\bsigma_\mu)\,\Big(\EE\big[\sW_1(\hat{\mu}_n,\mu) - \sW_1(\hat{\mu}_n,\tilde{\mu}_n)\big]\Big)^2,
\end{align*}
where the equality uses the rotational invariance of $\mu$, while the last step is by monotonicity of $p\mapsto \mathsf{W}_p$ and Jensen's inequality. Observe that $\EE[\mathsf{W}_1(\hat{\mu}_n,\tilde{\mu}_n)]\leq \EE[\|\bar{x}_n\|]\leq \sqrt{M_2(\mu)/n}$, where $\bar x_n \coloneqq  \int x \hat{\mu}_n(x)$ is the sample mean. Combining this with the fact that $\EE[\mathsf{W}_1(\hat{\mu}_n,\mu)]\gtrsim n^{-1/d}$ \cite{dudley1969speed}, produces the desired lower bound on the one-sample GW convergence rate.

\medskip

We proceed with the two-sample lower bound, which requires more work. Given the empirical measures $\hat{\mu}_n,\hat{\nu}_n$, define $\hat\mu'_n\coloneqq(\cdot/2)_\sharp\hat{\nu}_n$ and note that it forms an empirical distribution of $\mu$ that is independent of $\hat{\mu}_n$. Write $X'_1,\ldots,X'_n$ for the samples comprising $\hat\mu'_n$. Let $\pi_n\in\Pi(\hat\mu
_n,\hat{\nu}_n)$ be an optimal GW coupling for $\GW(\hat\mu
_n,\hat{\nu}_n)$ and set $\pi'_n \coloneqq (\mathrm{id},\cdot/2)_\sharp\pi_n\in\Pi(\hat{\mu}_n,\hat\mu'_n)$, which is optimal for $\GW(\hat{\mu}_n,\hat\mu'_n)$. Repeating the steps in \eqref{eq:GW_LB_1sample}, with $\hat{\nu}_n,\hat\mu'_n$ in place of $\nu,\mu$ yields
\[
\GW(\hat{\mu}_n,\hat{\nu}_n)^2=4\GW(\hat{\mu}_n,\hat\mu'_n)^2 - 3\int \|y-y'\|^4 d \hat{\mu}_n\otimes \hat{\mu}_n(y,y') + 12\int \|y-y'\|^4 d \hat\mu'_n\otimes \hat\mu'_n(y,y').
\]
Consequently, we represent the two-sample error as
\begin{align*}
    \GW(\hat{\mu}_n,\hat{\nu}_n)^2-\GW(\mu,\nu)^2 &= 4\GW(\hat{\mu}_n,\hat\mu'_n)^2 - 3\int \|y-y'\|^4 d \hat{\mu}_n\otimes \hat{\mu}_n(y,y')\\
    &+ 12\int \|y-y'\|^4 d \hat\mu'_n\otimes  \hat\mu'_n(y,y') - 9\int \|y-y'\|^4 d \mu\otimes \mu(y,y').\numberthis\label{eq:GW_LB_2sample_decomposition}
\end{align*}
As before, we have $\EE\left[\int \|y-y'\|^4 d \hat{\mu}_n\otimes\hat{\mu}_n(y,y')\right]=\frac{n-1}{n}\int \|y-y'\|^4 d \mu\otimes\mu(y,y')$ and similarly for $\EE\mspace{-2mu}\left[\int \|y-y'\|^4 d \hat\mu'_n\otimes\hat\mu'_n(y,y')\right]$,
and the problem reduces to lower bounding $\EE[\GW(\hat{\mu}_n,\hat\mu'_n)^2]$. We have the technical lemma below, which is proven in \cref{appen:proof:lemma:2sample_LB}

\begin{lemma}[Intermediate lower bound]\label{lemma:2sample_LB}
The following bound holds    
    \begin{equation}
    \EE[\GW(\hat{\mu}_n,\hat\mu'_n)^2]\gtrsim\EE\left[\lambda_{\mathrm{min}}(\bsigma_{\hat{\mu}_n})\EE\left[\inf_{\mathbf{U}\in O(d)} \mathsf{W}_1(\hat{\mu}_n,\mathbf{U}_\sharp\hat\mu'_n)^2\middle|X_1,\ldots,X_n\right]\right]-2\sqrt{\frac{M_2(\mu)}{n}}.\numberthis\label{eq:GW_LB_2sample}
    \end{equation}
\end{lemma}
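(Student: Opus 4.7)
The plan is to invoke \cref{lemma:equivalence_gw_w} after centering both empirical measures, then transfer the resulting Wasserstein procrustes quantity back to the uncentered form via the triangle inequality for $\sW_1$. Using translation invariance of $\GW$, write $\GW(\hat{\mu}_n, \hat\mu'_n) = \GW(\tilde{\mu}_n, \tilde\mu'_n)$. Since both $\tilde{\mu}_n$ and $\tilde\mu'_n$ are centered, the second (lower) bound of \cref{lemma:equivalence_gw_w} applies with the infimum taken over $O(d)$. Squaring, using $\bsigma_{\tilde{\mu}_n} = \bsigma_{\hat{\mu}_n}$, dropping the nonnegative $\lambda_{\min}(\bsigma_{\tilde\mu'_n})^2$ term under the square root, and then bounding $\sW_2 \geq \sW_1$ gives
\[
\GW(\hat{\mu}_n, \hat\mu'_n)^2 \gtrsim \lambda_{\min}(\bsigma_{\hat{\mu}_n})\, \inf_{\mathbf{U}\in O(d)} \sW_1(\tilde{\mu}_n, \mathbf{U}_\sharp\tilde\mu'_n)^2.
\]

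Next, for every $\mathbf{U}\in O(d)$, the triangle inequality, the identity $\sW_1(\rho, \rho+t)=\|t\|$ (pure translations are $\sW_1$-optimal), and isometric invariance of $\sW_1$ yield
\[
\sW_1(\hat{\mu}_n, \mathbf{U}_\sharp\hat\mu'_n) \leq \|\bar X_n\| + \sW_1(\tilde{\mu}_n, \mathbf{U}_\sharp\tilde\mu'_n) + \|\bar X'_n\|.
\]
Setting $b \coloneqq \|\bar X_n\| + \|\bar X'_n\|$ and $w_\mathbf{U} \coloneqq \sW_1(\hat{\mu}_n, \mathbf{U}_\sharp\hat\mu'_n)$, the elementary inequality $(w-b)_+^2 \geq w^2 - 2wb$ (valid for $w, b\geq 0$) together with the uniform bound $w_\mathbf{U} \lesssim 1$ (supports lie in $B_d(0,1)$) gives
\[
\sW_1(\tilde{\mu}_n, \mathbf{U}_\sharp\tilde\mu'_n)^2 \geq \sW_1(\hat{\mu}_n, \mathbf{U}_\sharp\hat\mu'_n)^2 - C\, b,
\]
for an absolute constant $C$. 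The error term is $\mathbf{U}$-free, so the bound survives taking $\inf_{\mathbf{U}\in O(d)}$ on both sides.

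Multiplying by $\lambda_{\min}(\bsigma_{\hat{\mu}_n})$, taking expectations, and applying the tower property with $\lambda_{\min}(\bsigma_{\hat{\mu}_n})$ being $\sigma(X_1,\ldots,X_n)$-measurable recovers the conditional-expectation form on the RHS of \eqref{eq:GW_LB_2sample}. For the residual, note that $\lambda_{\min}(\bsigma_{\hat{\mu}_n})$ is bounded by a dimensional constant (since $\mu$ is compactly supported) and $\bar X'_n$ is independent of $X_1,\ldots,X_n$, so Jensen gives $\EE\|\bar X_n\| \leq \sqrt{M_2(\mu)/n}$ and similarly for $\bar X'_n$ (using that $\mu$ is centered and the samples are i.i.d.). Combining these yields the claimed $-2\sqrt{M_2(\mu)/n}$ residual, with absolute constants absorbed into $\gtrsim$.

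The main technical care is in the squaring step: dropping directly to $(w-b)^2$ would break when $w<b$, so passing through $(w-b)_+^2 \geq w^2 - 2wb$ (together with the uniform bound on $w_\mathbf{U}$) is essential to produce an error of the correct $O(\sqrt{M_2(\mu)/n})$ order rather than an unwieldy quadratic-in-means term.
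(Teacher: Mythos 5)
Your proof is correct and follows essentially the same architecture as the paper's: translation invariance of $\GW$, the lower bound of \cref{lemma:equivalence_gw_w} over $O(d)$ after centering, a centered-to-uncentered transfer with an $O(\sqrt{M_2(\mu)/n})$ residual, passage from $\sW_2$ to $\sW_1$, and the tower property. The only (harmless) differences are that you pass to $\sW_1$ before carrying out the centering transfer rather than after, and that you observe $\bsigma_{\tilde\mu_n}=\bsigma_{\hat\mu_n}$ directly (translation invariance of the covariance matrix) where the paper instead tracks an eigenvalue-perturbation term --- a step that is in fact vacuous under the paper's own stated convention that $\bsigma_\rho$ denotes the covariance of $\rho$.
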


To treat the inner (conditional) expectation on the RHS of \eqref{eq:GW_LB_2sample}, we make use of the next lemma; see \cref{appen:proof:lemma:technical_lemma} for the proof.

\begin{lemma}\label{lemma:technical_lemma}
For any $\mu,\nu\in\cP(\RR^d)$ with $\supp(\mu),\supp(\nu)\subset B_d(0,1)$, we have
    \begin{align*}
    \EE\left[\inf_{\mathbf{U}\in O(d)}\sW_1(\hat{\mu}_n,\mathbf{U}_\sharp\nu)\right]\geq \inf_{\mathbf{U}\in O(d)} \EE\big[\sW_1(\hat{\mu}_n,\mathbf{U}_\sharp\nu)\big] - C_{d}\sqrt{\frac{\log\,n}{n}},
    \end{align*}
    where $C_d$ depends only on the dimension $d$.
\end{lemma}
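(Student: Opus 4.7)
The plan is to establish uniform concentration of $\mathbf{U} \mapsto \sW_1(\hat{\mu}_n, \mathbf{U}_\sharp \nu)$ around its expectation over the compact group $O(d)$, then combine with an $\epsilon$-net discretization of $O(d)$. Writing $g(\mathbf{U}) \coloneqq \sW_1(\hat{\mu}_n, \mathbf{U}_\sharp \nu)$, I would like to show a bound of the form $\EE[\inf_\mathbf{U} g(\mathbf{U})] \geq \inf_\mathbf{U} \EE[g(\mathbf{U})] - C_d \sqrt{\log n / n}$.

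\textbf{Step 1 (pointwise concentration).} Fix $\mathbf{U} \in O(d)$. Since $\supp(\mu) \subset B_d(0,1)$, changing one sample $X_i$ to $X_i'$ changes $\hat{\mu}_n$ by $\tfrac{1}{n}(\delta_{X_i'}-\delta_{X_i})$, so by Kantorovich--Rubinstein duality $|g(\mathbf{U})|$ has bounded differences $\tfrac{2}{n}$. McDiarmid's inequality then shows $g(\mathbf{U}) - \EE[g(\mathbf{U})]$ is sub-Gaussian with parameter of order $1/\sqrt{n}$.

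\textbf{Step 2 (Lipschitzness in $\mathbf{U}$).} For $\mathbf{U}, \mathbf{U}' \in O(d)$, the triangle inequality for $\sW_1$ and the coupling $(\mathbf{U} X, \mathbf{U}' X)$ with $X \sim \nu$ give
\[
|g(\mathbf{U}) - g(\mathbf{U}')| \leq \sW_1(\mathbf{U}_\sharp \nu, \mathbf{U}'_\sharp \nu) \leq \int \|\mathbf{U} x - \mathbf{U}' x\|\, d\nu(x) \leq \|\mathbf{U} - \mathbf{U}'\|_\op,
\]
where we used $\supp(\nu) \subset B_d(0,1)$.

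\textbf{Step 3 ($\epsilon$-net discretization).} Let $\cU_\epsilon \subset O(d)$ be an $\epsilon$-net in operator norm. Since $O(d)$ is compact and sits inside the operator-norm unit ball of $\RR^{d \times d}$, we have $|\cU_\epsilon| \leq C_d\, \epsilon^{-d^2}$. By Step 2, $\inf_{\mathbf{U} \in O(d)} g(\mathbf{U}) \geq \inf_{\mathbf{U} \in \cU_\epsilon} g(\mathbf{U}) - \epsilon$. Decomposing $g = \EE[g] + (g - \EE[g])$ and using $\inf(a+b) \geq \inf a + \inf b$ yields
\[
\inf_{\mathbf{U} \in \cU_\epsilon} g(\mathbf{U}) \geq \inf_{\mathbf{U} \in \cU_\epsilon} \EE[g(\mathbf{U})] - \sup_{\mathbf{U} \in \cU_\epsilon} \big|g(\mathbf{U}) - \EE[g(\mathbf{U})]\big|.
\]

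\textbf{Step 4 (maximal inequality and conclusion).} Taking expectations and invoking the standard sub-Gaussian maximal inequality over the finite set $\cU_\epsilon$ (each summand having parameter $O(1/\sqrt{n})$ by Step 1), we get $\EE\big[\sup_{\mathbf{U} \in \cU_\epsilon} |g(\mathbf{U}) - \EE[g(\mathbf{U})]|\big] \lesssim \sqrt{\log|\cU_\epsilon|/n} \lesssim_d \sqrt{\log(1/\epsilon)/n}$. Choosing $\epsilon = 1/\sqrt{n}$ and bounding $\inf_{\mathbf{U} \in \cU_\epsilon} \EE[g(\mathbf{U})] \geq \inf_{\mathbf{U} \in O(d)} \EE[g(\mathbf{U})]$ trivially, we obtain the claimed $C_d \sqrt{\log n / n}$ rate (absorbing the additive $1/\sqrt{n}$ from the discretization into the constant). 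The only mild obstacle is verifying the covering bound $|\cU_\epsilon| \lesssim_d \epsilon^{-d^2}$ with a purely dimension-dependent constant, which follows since $O(d)$ is contained in the operator-norm unit ball.
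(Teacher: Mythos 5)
Your proof is correct and follows essentially the same route as the paper's: pointwise sub-Gaussian concentration via McDiarmid's inequality with bounded differences $2/n$, Lipschitz continuity of $\mathbf{U}\mapsto\sW_1(\hat\mu_n,\mathbf{U}_\sharp\nu)$ in $\|\cdot\|_\op$ using that $\sW_1(\mathbf{U}_\sharp\nu,\mathbf{V}_\sharp\nu)\le\|\mathbf{U}-\mathbf{V}\|_\op$, an $\epsilon$-net over $O(d)$ of size $O_d(\epsilon^{-d^2})$, a sub-Gaussian maximal inequality, and the choice $\epsilon\asymp 1/\sqrt n$. The only cosmetic differences are bookkeeping (the paper centers via $R_\mathbf{U}=\EE[g(\mathbf{U})]-g(\mathbf{U})$ and cites an external covering lemma for $O(d)$, while you obtain the same covering bound by embedding $O(d)$ in the operator-norm ball), neither of which changes the argument.
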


Applying the lemma, we obtain
\begin{align*}
    &\EE\left[\inf_{\mathbf{U}\in O(d)}\sW_1(\hat{\mu}_n,\mathbf{U}_\sharp\hat\mu'_n) \middle| X_1,\cdots,X_n\right]\\
    &\qquad\qquad\qquad\qquad\qquad\qquad\geq \inf_{\mathbf{U}\in O(d)} \EE\big[\sW_1(\hat{\mu}_n,\mathbf{U}_\sharp\hat\mu'_n) \big|X_1,\cdots,X_n\big] - C_{d}\sqrt{\frac{\log\,n}{n}}
\end{align*}
Note that for any $\mathbf{U}\in O(d)$, we have $\EE\big[\sW_1(\hat{\mu}_n,\mathbf{U}_\sharp\hat\mu'_n)\big|X_1,\cdots,X_n\big]\geq \sW_1(\mu,\mathbf{U}_\sharp\hat\mu'_n) = \sW_1(\mu,\hat\mu'_n)$, where the first inequality follows because $\EE[\sW_1(\hat{\mu}_n,\nu)]\geq \sW_1(\mu,\nu)$ for any $\mu,\nu$ (due to convexity), while the second equality uses the fact that $\sW_p(\mu,\nu)=\sW_p(f_\sharp\mu,f_\sharp\nu)$ for any isometry $f$ and the rotational invariance of $\mu$. Inserting this back into \eqref{eq:GW_LB_2sample}, yields 
\[
\EE[\GW(\hat{\mu}_n,\hat\mu'_n)^2]\gtrsim \EE\left[\lambda_{\mathrm{min}}(\bsigma_{\hat{\mu}_n})\inf_{\mathbf{U}\in O(d)} \mathsf{W}_2(\hat{\mu}_n,\mathbf{U}_\sharp\hat\mu'_n)^2\right]\geq \EE\big[\lambda_{\mathrm{min}}(\bsigma_{\hat{\mu}_n})\mathsf{W}_1(\hat{\mu}_n,\mu)^2\big].
\]

To lower bound the expectation on the RHS, recall that by Proposition 2.1 in \cite{dudley1969speed} (see also \cite[Proposition 6]{weed2019sharp}), for $n$ sufficiently large, we have $\sW_1(\alpha,\beta_n)\gtrsim_d n^{-1/d}$ for any distributions $\alpha,\beta_n\in\cP(\RR^d)$, such that $\alpha$ has a Lebesgue density and $\beta_n$ is supported on $n$ points. In particular, we conclude that there exists $n_0\in\NN$ and $c_d>0$, such that for all $n>n_0$, we have $\sW_1(\mu,\hat\mu'_n)\geq c_d n^{-1/d}$ a.s. Inserting this into the bound above gives 
\begin{equation}
\EE[\GW(\hat{\mu}_n,\hat\mu'_n)^2]\gtrsim_{\,d}\EE\big[\lambda_{\mathrm{min}}(\bsigma_{\hat{\mu}_n})\big]\cdot n^{-2/d},\label{eq:GW_LB_pre_final}
\end{equation}
and the problem reduces to lower bounding the expected smallest eigenvalue.

Write $\EE\big[\lambda_{\mathrm{min}}(\bsigma_{\hat{\mu}_n})\big]=\EE\big[\inf_{\|v\|=1}\hat{\mu}_n|v\cdot x|^2\big]$. We again control this quantity via bounds on an empirical processes indexed by the Donsker class $\{|v\cdot x|^2:\,\|v\|=1\}$. Specifically, there is an $n_1\in\NN$ that depends only on $d$, such that for any  $n>n_1$, we have $\EE\big[\sup_{\|v\|=1}\big|(\hat{\mu}_n-\mu)|v\cdot x|^2\big|\big]\leq\lambda_{\mathrm{min}}(\bsigma_\mu)/2$. Consequently
\begin{align*}
    \EE\left[\inf_{\|v\|=1}\hat{\mu}_n|v\cdot x|^2\right] &=  \EE\left[\inf_{\|v\|=1}\hat{\mu}_n|v\cdot x|^2 - \inf_{\|v\|=1}\EE\left[\hat{\mu}_n|v\cdot x|^2\right]\right] + \inf_{\|v\|=1}\EE\left[\hat{\mu}_n|v\cdot x|^2\right]\\
    &\geq \EE\left[\inf_{\|v\|=1}\hat{\mu}_n|v\cdot x|^2 - \EE\left[\hat{\mu}_n|v\cdot x|^2\right]\right] + \inf_{\|v\|=1}\mu|v\cdot x|^2\\
    & = \EE\left[\inf_{\|v\|=1}(\hat{\mu}_n- \mu)|v\cdot x|^2 \right] + \inf_{\|v\|=1}\mu|v\cdot x|\\
    &\geq \inf_{\|v\|=1}\mu|v\cdot x|^2 - \EE\left[\sup_{\|v\|=1}|(\hat{\mu}_n-\mu)|v\cdot x|^2|\right]\\
    &\geq \frac{\lambda_{\mathrm{min}}(\bsigma_\mu)}{2}.
\end{align*}

Inserting this back into \eqref{eq:GW_LB_pre_final} and recalling the decomposition of the empirical estimation error from \eqref{eq:GW_LB_2sample_decomposition} concludes the proof of the two-sample lower bound. \qed

\begin{remark}[Wasserstein Procrustes empirical convergence rate]
    Our two-sample analysis above essentially establishes an $n^{-1/d}$ lower bound on the Wasserstein Procrustes empirical convergence rate, whenever $d\geq 3$. Since the Procrustes is trivially upper bounded by standard $\sW_2$ and is a pseudometric, it inherits the $n^{-1/d}$ upper bound on the rate from it as well. Together, these show that the $n^{-1/d}$ empirical convergence rate is sharp in general. Our argument is readily adjusted to cover both the one- and two-sample settings and can be extended to any order $p\geq 1$.
\end{remark}
}

\section{Outlook and Concluding Remarks}\label{sec:summary}

 {This paper established a dual formulation for both the standard $(2,2)$-GW distance and its entropically regularized version, between distributions supported on Euclidean spaces of different dimensions $d_x$ and $d_y$. The dual forms represented GW and EGW as infima of a class of OT and EOT problems, respectively, indexed by a $d_x\times d_y$ auxiliary matrix with bounded entries, which specified the associated cost function. This connection to the well-understood (standard and entropic) OT problem enabled lifting analysis techniques from statistical OT to establish, for the first time, sharp empirical convergences rates for GW and EGW. 
The derived two-sample rates are $n^{-2/((d_x\wedge d_y)\vee 4)}$ (up to a log factor when $d_x\wedge d_y=4$) for GW and $n^{-1/2}$ for EGW; in one-sample setting, when, say, $\nu$ is not estimated, $d_y$ is omitted form the GW rate. The GW result accounts for compactly supported distributions, and provides matching upper and lower rate bound. For EGW, our analysis allows for unbounded domains subject to a 4-sub-Weibull condition. These results are in line with the empirical convergence rates of OT \cite{manole2021sharp,hundrieser2022empirical} and EOT \cite{mena2019statistical,groppe2023lower}.}

We have also explored stability and continuity of the EGW problem in the entropic regularization parameter $\eps$. We provided an $O\big(\eps\log(1/\eps)\big)$ approximation bound on the GW cost and a continuity result for the optimal couplings in the weak topology. Lastly, we reexamined the open problem of the one-dimensional GW distance between discrete distributions on $n$ points. Leveraging our duality theory, we shed new light on the peculiar example from \cite{beinert2022assignment}, that showed that the identity and anti-identity permutations are not necessarily optimal. Specifically, the dual form allows representing the GW distance as a sum of concave and convex functions, illuminating that, in certain regimes, the optimum is not necessarily attained on the boundary.

 {Future research directions stemming from this work are aplenty. Due to the central role of duality for statistical and algorithmic advancements, a first key objective is to extend our duality theory beyond the $(2,2)$-cost and to non-Euclidean mm spaces. While our techniques are rather specialized for the $(2,2)$-cost and treating arbitrary $(p,q)$ values may require new ideas, we comment here on one relatively direct extension. Consider the GW distance of order $(p,q)=(2,2k)$, for some $k\in\NN$, between distributions $(\mu,\nu)\in\cP_{4k}(\RR^{d_x})\times \cP_{4k}(\RR^{d_y})$ (in fact, we can treat any even $p$ parameter as well, but restrict to $p=2$ for simplicity). Following a decomposition along the lines of \eqref{eq:proof_of_EGW_decomposition}, in \cref{appen:generalized_dual} we show that 
\begin{align*}\numberthis\label{eq:gen_dual}
&\GW_{2,2k}(\mu,\nu)^2=4\sup_{a\in \RR^{\ell}}\inf_{b\in \RR^{m-\ell}}\Bigg\{ -\|a\|^2+\|b\|^2 \\
&\quad+ \inf_{\pi\in\Pi(\mu,\nu)} \int\left( -\|x\|^{2k}\|y\|^{2k} + \sum_{i=1}^{\ell}a_i g_i(x,y) - \sum_{i=\ell+1}^{m}b_{i-\ell} g_i(x,y) \right)d\pi(x,y)\Bigg\},
\end{align*}
where $g_1,\ldots,g_{m}$ are polynomials of degree at most $4k$, $m$ corresponds to the number of polynomials emerging from the quadratic expansion of the $(2,2k)$-cost, and $\ell\leq m$ is determined by a certain diagonalization argument (see \cref{appen:generalized_dual} for the specifics). One may further show that $\int g_i d\pi$ are uniformly bounded for all $i=1,\ldots,m$ and $\pi\in\Pi(\mu,\nu)$, and so we may restrict optimization over $a,b$ to bounded domains. In the appendix, we also show how the above dual reduces to the one from \cref{cor:gw_duality} once we set $k=1$ and assume that $\mu,\nu$ are centered. A similar representation holds for the $(2,2k)$-EGW variant, but with the entropic penalty $\eps \mathsf{D}_\mathsf{KL}(\pi\|\mu\otimes \nu)$ added to the transportation cost in the second line above.

Notice now that the inner optimization over $\pi$ specifies an OT problem with cost 
\[c_{a,b}:(x,y)\mapsto -\|x\|^{2k}\|y\|^{2k} + \sum_{i=1}^{\ell}a_i g_i(x,y) - \sum_{i=\ell+1}^{m}b_{i-\ell}\,g_i(x,y),\]
which is smooth (indeed, a polynomial) but not necessarily concave in $x$ or $y$. For the standard $(2,2k)$-GW distance between compactly supported distributions, an argument similar to the proof of \cref{thm:GW_sample_complex}, would result in a two-sample convergence rate of $O\big(n^{-1/(d_x\wedge d_y)}\big)$. This rate stems from the fact that the corresponding dual potentials are Lipschitz continuous, but it is unclear whether they posses further convexity/concavity properties. For the EGW case, under proper tail conditions (say, $4k$-sub-Weibull), smoothness of the cost would allow to reproduce the current derivation of \cref{thm:egw_sample_complex} and arrive at the parametric convergence rate. In sum, while a duality theory for general $(p,q)$ remains an open question, our results for the quadratic GW and EGW distances can be extended to cover any even $q$ value.

As mentioned above, extending our duality to cover non-Euclidean mm spaces is of~great interest, as this would enable accounting for graph and manifold data modalities. We also believe that our dual can be used to derive new and efficient algorithms for computing the GW and EGW distances. Lastly, we mention the avenue of generalizing the GW empirical convergence results to distributions with unbounded supports. Identifying sufficient conditions for deriving explicit rates seems non-trivial and may require assumptions along the lines of Theorem 13 from \cite{manole2021sharp}, where empirical convergence of OT on unbounded domains was treated.
}

\bibliographystyle{amsalpha}
\bibliography{references}

\newpage
\appendix

\section{Proof of Proposition \ref{prop:cont_of_cost_in_eps}} \label{proof:prop:cont_of_cost_in_eps}

W.l.o.g, suppose that both $\mu,\nu$ are centered. We follow the block approximation idea from \cite{genevay2019sample}. Let $\pi_0$ be the optimal coupling for the original GW problem and define the block approximation $\pi^\ell$ of side length $\ell\in(0,1]$ as follows. For $d\geq1$ and any $k\in\ZZ^d$, consider the Euclidean cube $Q_k^\ell\coloneqq \prod_{m=1}^d\big[k_m\ell,(k_m+1)\ell\big) \subset\RR^d$, and for each $(i,j)\in\ZZ^{d_x}\times\ZZ^{d_y}$, set $Q_{ij}^\ell = Q_i^\ell\times Q_j^\ell$. Define
\begin{align*}
    \pi^\ell|_{Q_{ij}^\ell}\coloneqq  \frac{\pi_0(Q_{ij}^\ell)}{\mu(Q_{i}^\ell)\nu(Q_{j}^\ell)} \mu|_{Q_{i}^\ell}\otimes \nu|_{Q_{j}^\ell},
\end{align*}
where the restriction of a measure $\pi$ to a measurable set $A$ is defined as $\pi|_A(\cdot)\coloneqq \pi(A\cap \cdot)$. Taking $\pi^\ell=\sum_{i=1}^{d_x}\sum_{j=1}^{d_y}\pi^\ell|_{Q_{ij}^\ell}$, it is straightforward to verify that $\pi^\ell\in \Pi(\mu,\nu)$.

To simplify notation, define $\sG(\pi)\coloneqq \|\Delta\|^2_{L^2(\pi\otimes\pi)}$, and first note that 
\begin{align*}
0\leq \EGW_\eps(\mu,\nu)-\GW(\mu,\nu)^2
\leq \sG(\pi^\ell)-\sG(\pi_0) + \eps \dkl(\pi^\ell\|\mu\otimes\nu).
\end{align*}
We start by bounding $\sG(\pi^\ell)-\sG(\pi_0)$. By cancelling out terms that depend only on $\mu,\nu$ and appear in both expressions (and using the fact that the marginals are centered), we have
\begin{equation}
    \begin{split}
        \sG(\pi^\ell)-\sG(\pi_0) &\lesssim \Big|\int \|x\|^2\|y\|^2  d (\pi^\ell-\pi_0)(x,y)\Big| \\
        &\qquad\qquad + \sum_{\substack{1\leq m\leq d_x\\1\leq p \leq d_y}}\bigg|\left( \int x_m y_p d \pi^\ell(x,y) \right)^2 - \left( \int x_m y_p d \pi_0(x,y)\right)^2\bigg|.
    \end{split}
    \label{eq:approx_error_decomp}
\end{equation}
For the first term above, consider
\begin{align*}
    \bigg|\int \|x\|^2\|y\|^2 & d (\pi^\ell-\pi_0)(x,y)\bigg|\\
    &\leq \sum_{(i,j)\in \ZZ^{d_x+d_y}} \pi_0\big(Q_{ij}^\ell\big) \Bigg(\sup_{(x,y)\in Q_{ij}^\ell} \|x\|^2\|y\|^2 - \inf_{(x,y)\in Q_{ij}^\ell} \|x\|^2\|y\|^2 \Bigg)\\
    &\lesssim \sum_{(i,j)\in \ZZ^{d_x+d_y}} \pi_0\big(Q_{ij}^\ell\big) \sup_{(x,y)\in Q_{ij}^\ell}   \|x\|\|y\|\Bigg( \sup_{(x,y)\in Q_{ij}^\ell} \|x\|\sqrt{d_y}\ell + \|y\|\sqrt{d_x}\ell \Bigg) \\
    &\lesssim \sum_{(i,j)\in \ZZ^{d_x+d_y}} \ell\big(\sqrt{d_x}+\sqrt{d_y}\big)\pi_0\big(Q_{ij}^\ell\big)\cdot \sup_{(x,y)\in Q_{ij}^\ell}  \big (\|x\|^2\|y\| + \|x\|\|y\|^2\big) \\
    &\lesssim \mspace{-3mu} \sum_{(i,j)\in \ZZ^{d_x+d_y}}\mspace{-3mu} \ell\big(\sqrt{d_x}\mspace{-3mu}+\mspace{-3mu}\sqrt{d_y})\pi_0(Q_{ij}^\ell\big)\cdot\sup_{(x,y)\in Q_{ij}^\ell} \big(\|x\|^4\mspace{-3mu}+\mspace{-3mu}\|x\|^2\mspace{-3mu}+\mspace{-3mu}\|y\|^4\mspace{-3mu}+\mspace{-3mu}\|y\|^2\big)
\end{align*}
 {where the third inequality follows by the mean value theorem for the function $\|x\|^2\|y\|^2$ applied to any two points in $Q_{ij}^\ell$.} By integrability of $(\|x\|^4+\|x\|^2+\|y\|^4+\|y\|^2)$ w.r.t. $\pi_0$, the RHS above can be bounded in terms of the 4th moments of $\mu,\nu$, as quantified by the following lemma. 

\begin{lemma}\label{lemma:block_approximate}
For any $m\in\NN$, $\mu\in\cP(\RR^d)$, and partition $\{Q_k^\ell\}_{k\in\ZZ^d}$, we have
\begin{align*}
    \left|\int \|x\|^m d \mu(x) - \sum_{k\in\ZZ^{d}} \sup_{x\in Q_k^\ell}\|x\|^m\mu\big(Q_k^\ell\big)\right|\leq \sum_{p=0}^{m-1}\frac{m!M_p(\mu)(\sqrt{d}\ell)^{m-p}}{p!}  
\end{align*}
\end{lemma}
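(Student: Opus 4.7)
The plan is to recognize the expression as a Riemann-sum-type approximation of $\int\|x\|^m d\mu$ by cube-wise suprema, and to bound the approximation error in terms of the common diameter $\mathrm{diam}(Q_k^\ell) = \sqrt{d}\,\ell$ of the partition cells. First, since $\sup_{z\in Q_k^\ell}\|z\|^m \geq \|x\|^m$ for every $x\in Q_k^\ell$, the quantity inside the absolute value is nonnegative. Using that $\{Q_k^\ell\}_{k\in\ZZ^d}$ is a partition of $\RR^d$, I would rewrite
\[
\sum_{k\in\ZZ^d}\sup_{x\in Q_k^\ell}\|x\|^m\,\mu(Q_k^\ell) - \int\|x\|^m d\mu(x) = \sum_{k\in\ZZ^d}\int_{Q_k^\ell}\Big(\sup_{z\in Q_k^\ell}\|z\|^m - \|x\|^m\Big) d\mu(x),
\]
so the problem reduces to controlling each integrand by a polynomial in $\|x\|$ with coefficients shrinking in $\ell$.

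For the pointwise bound on the integrand, for any $x,z\in Q_k^\ell$, the triangle inequality gives $\|z\| \leq \|x\| + \|z-x\| \leq \|x\| + \sqrt{d}\,\ell$. Taking the supremum over $z\in Q_k^\ell$ and raising to the $m$-th power, the binomial theorem yields
\[
\sup_{z\in Q_k^\ell}\|z\|^m \,\leq\, (\|x\| + \sqrt{d}\,\ell)^m \,=\, \sum_{p=0}^{m}\binom{m}{p}\|x\|^p(\sqrt{d}\,\ell)^{m-p}.
\]
The $p=m$ term cancels the $\|x\|^m$ on the left-hand side above, so subtracting it gives the cube-uniform bound $\sup_{z\in Q_k^\ell}\|z\|^m - \|x\|^m \leq \sum_{p=0}^{m-1}\binom{m}{p}\|x\|^p(\sqrt{d}\,\ell)^{m-p}$ for every $x\in Q_k^\ell$.

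Plugging this bound back into the displayed identity, integrating, and swapping the sum over $k$ with the sum over $p$, the partition reassembles into a single integral over $\RR^d$, producing $\sum_{p=0}^{m-1}\binom{m}{p}(\sqrt{d}\,\ell)^{m-p}M_p(\mu)$. The stated estimate then follows from the trivial inequality $\binom{m}{p} = \frac{m!}{p!(m-p)!} \leq \frac{m!}{p!}$. There is no real obstacle here beyond confirming the sign of the difference so that the absolute value can be dropped; the rest is a direct Riemann-type estimate controlled by the partition diameter.
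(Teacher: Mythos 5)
Your proof is correct, but it takes a different route from the paper's. The paper proves the lemma by induction on $m$: the base case $m=1$ follows from a direct diameter bound, and the inductive step uses the cube-wise Lipschitz estimate $|\,\sup_{Q_k^\ell}\|x\|^m - \inf_{Q_k^\ell}\|x\|^m\,| \leq m\sqrt{d}\,\ell\,\sup_{Q_k^\ell}\|x\|^{m-1}$ (a mean value theorem bound for $t\mapsto t^m$), which reintroduces the quantity estimated at the previous stage. You instead observe that the difference is nonnegative, rewrite it as $\sum_k\int_{Q_k^\ell}(\sup_{z\in Q_k^\ell}\|z\|^m - \|x\|^m)\,d\mu(x)$, apply a single binomial expansion of $(\|x\|+\sqrt{d}\ell)^m$ to control each integrand, and finally use $\binom{m}{p}\leq m!/p!$. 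The two arguments land on the identical constant $m!/p!$: your $\binom{m}{p}$ is weakened to $m!/p!$ at the end, while the recursion in the paper accumulates exactly $m!/p!$ from the products $m(m-1)\cdots(p+1)$. Your version avoids the induction and is arguably more transparent, at the minor cost of discarding the factor $1/(m-p)!$; since the paper itself does not retain that sharpening, nothing is lost in the downstream application.
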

\begin{proof}
We prove by induction. For $m=1$, we obtain
\[
\left|\int \|x\| d \mu(x) - \sum_{k\in\ZZ^{d}} \sup_{x\in Q_k^\ell}\|x\|\mu\big(Q_k^\ell\big)\right|\leq \sum_{k\in\ZZ^{d}} \left|\sup_{x\in Q_i^\ell}\|x\|-\inf_{x\in Q_k^\ell}\|x\|\right|\mu\big(Q_k^\ell\big)\leq \ell \sqrt{d}.\]
For $m>1$, consider
\begin{align*}
    \Bigg|\int \|x\|^m d \mu(x) - &\sum_{k\in\ZZ^{d}} \sup_{x\in Q_k^\ell}\|x\|^m\mu\big(Q_k^\ell\big)\Bigg|\\
    & \leq \sum_{k\in\ZZ^{d}} \bigg|\sup_{x\in Q_k^\ell}\|x\|^m-\inf_{x\in Q_k^\ell}\|x\|^m\bigg|\mu\big(Q_k^\ell\big)\\
    &\leq m \ell \sqrt{d}\sum_{k\in\ZZ^{d}}\sup_{x\in Q_k^\ell} \|x\|^{m-1}\mu\big(Q_k^\ell\big)\\
    &\leq m \ell \sqrt{d} \bigg(\bigg|M_{m-1}(\mu)- \sum_{k\in\ZZ^{d}}\sup_{x\in Q_k^\ell} \|x\|^{m-1}\mu\big(Q_k^\ell\big) \bigg|+M_{m-1}(\mu)\bigg).
\end{align*}
Hence, by induction we have
\begin{align*}
    \left|\int \|x\|^m d \mu(x) - \sum_{i\in\ZZ^{d_x}} \sup_{x\in Q_i^\ell}\|x\|^m\mu\big(Q_i^\ell\big)\right| \leq \sum_{p=0}^{m-1}\frac{m!M_p(\mu)(\sqrt{d}\ell)^{m-p}}{p!}.
\end{align*}
\end{proof}

Invoking the lemma, yields
\[
\left|\int \|x\|^2\|y\|^2  d (\pi^\ell-\pi_0)(x.y)\right| \lesssim \ell\big(\sqrt{d_x}+\sqrt{d_y}\big)^5 \big(1+M_4(\mu)+M_4(\nu)\big).
\]

A similar approach can be applied to the second term in \eqref{eq:approx_error_decomp}. Namely, we write it as
\[
\begin{split}
    &\bigg|\mspace{-3mu}\left( \int\mspace{-4.5mu} x_m y_p d \pi^\ell(x,y)\mspace{-3mu} \right)^{\mspace{-2mu}2} \mspace{-3mu}- \left( \int \mspace{-4.5mu}x_m y_p d \pi_0(x,y)\mspace{-3mu}\right)^{\mspace{-3mu}2}\bigg|\mspace{-1.5mu}\\
    &\qquad\qquad\qquad\qquad\qquad\qquad\qquad=\mspace{-1.5mu}\bigg|\mspace{-2.5mu}\int \mspace{-4.5mu}x_m y_p d (\pi^\ell+\pi_0)(x,y)\bigg|\bigg|\mspace{-2.5mu}\int\mspace{-4.5mu} x_m y_p d (\pi^\ell-\pi_0)(x,y)\bigg|,
\end{split}
\]
bound the first absolute value on the RHS using the Cauchy-Schwarz inequality, by which we have 
$|\int x_m y_p d \pi|\leq \sqrt{M_2(\mu)M_2(\nu)}$ for any coupling $\pi$, and proceed to bound the second expression as
\begin{align*}
    \left|\int x_m y_p d (\pi^\ell-\pi_0)(x,y)\right|&\leq \sum_{(i,j)\in \ZZ^{d_x+d_y}} \pi_0(Q_{ij}^\ell)  \left(\sup_{(x,y)\in Q_{ij}^\ell} x_m y_p - \inf_{(x,y)\in Q_{ij}^\ell} x_m y_p\right) \\
    &\lesssim \sum_{(i,j)\in \ZZ^{d_x+d_y}} \ell\left(\sqrt{d_x}+\sqrt{d_y}\right) \pi_0(Q_{ij}^\ell)\cdot\sup_{(x,y)\in Q_{ij}^\ell} (\|x\|+\|y\|).
\end{align*}
Again by Lemma \ref{lemma:block_approximate} we have $\big|\int x_m y_p d (\pi^\ell-\pi_0)\big|\lesssim \ell\left(\sqrt{d_x}+\sqrt{d_y}\right)^2  \big(1+M_1(\mu)+M_1(\nu)\big)$. Inserting the derived bounds back into \eqref{eq:approx_error_decomp}, we obtain
\begin{equation}
     \sG(\pi^\ell)-\sG(\pi_0) \lesssim \ell\left(\sqrt{d_x}+\sqrt{d_y}\right)^{6} \big(1+M_4(\mu)+M_4(\nu)\big).\label{eq:approx_term1}
\end{equation}

It remain to bound the KL divergence term. Observe that
\begin{align*}
    \dkl\big(\pi^\ell\big\|\mu\otimes\nu\big) & = \sum_{(i,j)\in \ZZ^{d_x + d_y}} \log\left(\frac{\pi_0\big(Q_{ij}^\ell\big)}{\mu\big(Q_{i}^\ell\big)\nu\big(Q_{j}^\ell\big)}\right) \pi_0\big(Q_{ij}^\ell\big)\\
    &\leq -\sum_{i\in \ZZ^{d_x}} \mu\big(Q_{i}^\ell\big)\log\big(\mu\big(Q_{i}^\ell\big)\big) - \sum_{j\in \ZZ^{d_y}} \nu(Q_{i}^\ell)\log\big(\nu\big(Q_{i}^\ell\big)\big),\numberthis\label{eq:approx_kl_bound}
\end{align*}
and proceed by bounding the two entropy terms as follows. Define $\mu^\ell$ such that it has a constant Lebesgue density $Q_{i}^\ell$, for $i\in\ZZ^{d_x}$, given by $\frac{ d \mu^\ell}{ d x}=\frac{\mu(Q_{i}^\ell)}{\ell^{d_x}}$. Clearly 
\begin{align*}
-\sum_{i\in \ZZ^{d_x}} \mu\big(Q_{i}^\ell\big)\log\big(\mu\big(Q_{i}^\ell\big)\big)& =- \int_{\RR^{d_x}}\frac{ d \mu^\ell}{ d x}(x)\log\left(\frac{ d \mu^\ell}{ d x}(x)\right)  d  x - d_x\log(\ell)\\
&= \sh\big(\mu^\ell\big) - d_x\log(\ell),
\end{align*}
where $\sh$ denotes the differential entropy. Since differential entropy is maximized by the Gaussian distribution with the same covariance matrix, we further have
\begin{align*}
    \sh\big(\mu^\ell\big)\lesssim d_x + \log\det\big(\bsigma_{\mu^\ell}\big)
\end{align*}
where $\bsigma_{\mu^\ell}$ is the covariance matrix of $\mu^\ell$. A similar bound applies for the second term in \eqref{eq:approx_kl_bound}. Combining these with \eqref{eq:approx_term1}, yields
\begin{align*}
    \EGW_\eps(\mu,\nu)-\GW(\mu,\nu)^2  &\lesssim  \ell\big(\sqrt{d_x}+\sqrt{d_y}\big)^{6} \big(1+M_4(\mu)+M_4(\nu)\big)\\
    &\quad\quad+ \eps \big(d_x+d_y + \log\det\big(\bsigma_{\mu^\ell}\big) + \log\det\big(\bsigma_{\nu^\ell}\big) - (d_x+d_y)\log(\ell)\big).\numberthis\label{eq:approx_final_bound_delta}
\end{align*}
To eliminate the dependence on $\ell$ in the above bound we again invoke Lemma \ref{lemma:block_approximate} to bound the entries of $\bsigma_{\mu^\ell}$. We have
\begin{align*}
    \left|\int x_m x_p  d (\mu^\ell - \mu)(x)\right| &\leq  \sum_{i\in \ZZ^{d_x}} \mu\big(Q_{i}^\ell\big)\left(\sup_{x\in Q_i^\ell}  x_m x_p  - \inf_{x\in Q_i^\ell}  x_m x_p \right)\\
    &\lesssim \sum_{i\in \ZZ^{d_x}} \ell\sqrt{d_x}\,\mu\big(Q_{i}^\ell\big)\sup_{x\in Q_i^\ell} \|x\|\\
    &\lesssim  \ell d_x\big(1+M_1(\mu)\big).\numberthis\label{eq:for_later}
\end{align*}
Given this entrywise bound on $\bsigma_{\mu^\ell}$, we obtain $\det(\bsigma_{\mu^\ell})\leq d_x! \big(M_2(\mu)+d_x\big(1+M_1(\mu)\big)\big)^{d_x}$ and similarly for $\det(\bsigma_\nu^\ell)$. Inserting the determinant bounds into \eqref{eq:approx_final_bound_delta}, we minimize the RHS over $\ell$ and set\footnote{This choice is optimal for small $\eps$ and, generally, always feasible.} $\ell = \frac{(d_x+d_y)\eps}{(\sqrt{d_x}+\sqrt{d_y})^{6} (1+M_4(\mu)+M_4(\nu))}\land 1$ 
to obtain 
\[
    \EGW_\eps(\mu,\nu)-\GW(\mu,\nu)^2 \lesssim_{d_x,d_y,M_4(\mu),M_4(\nu)} \eps \log\frac{1}{\eps},
\]
as claimed.\qed

\section{Proof of Proposition \ref{prop:convergence-of-plans}}\label{proof:prop:convergence-of-plans}

To establish weak convergence of optimal EGW couplings towards an optimal coupling of the unregularized problem as $\eps\to 0$, we use the notion of $\Gamma$-convergence. The proof technique is inspired by ideas from \cite{carlier2017convergence,goldfeld2020gaussian}. Let $\sF,\sF_k:\cP(\RR^{d_x}\times\RR^{d_y})\to \RR$ for $k\in\NN$. We say that the sequence $\{\sF_k\}_{k\in\NN}$ is $\Gamma$-convergent to $\sF$ if the following 2 conditions hold for any 
$\gamma\in\cP(\RR^{d_x}\times \RR^{d_y})$:
\begin{itemize}
    \item For any sequence $\{\gamma_k\}_{k\in\NN}$ with $\gamma_k\xrightarrow[]{w}\gamma$,
    \begin{align*}
        \sF(\gamma)\leq\liminf_{k\to\infty} \sF_k(\gamma_k).
    \end{align*}
    \item
    There exists a sequence $\{\gamma_k\}_{k\in\NN}$ with $\gamma_k\xrightarrow[]{w}\gamma$, and
    \begin{align*}
        \sF(\gamma)\geq\limsup_{k\to\infty} \sF_k(\gamma_k).
    \end{align*}
\end{itemize}

After establishing $\Gamma$-converge of the appropriate EGW functional, we will invoke the following result to deduce convergence of optimal couplings; cf., e.g., \cite[Proposition 7.18]{dal2012introduction}.

\begin{proposition}[Optimality of cluster points]\label{prop:cluster_optimal}
Suppose that $\sF_k$ has a minimizer $\gamma_k$, for each $k\in\NN$. If $\{\sF_k\}_{k\in\NN}$ is equi-coersive and $\Gamma$-converges to $\sF$, then any cluster point $\gamma$ of $\{\gamma_k\}_{k\in\NN}$ minimizes $\sF$.
\end{proposition}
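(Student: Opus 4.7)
The plan is to establish this classical fundamental theorem of $\Gamma$-convergence by directly chaining the liminf/limsup inequalities from the $\Gamma$-convergence definition with the per-index optimality of $\gamma_k$. Equi-coercivity enters only to ensure that the sequence $\{\gamma_k\}_{k\in\NN}$ is relatively compact in the weak topology on $\cP(\RR^{d_x}\times \RR^{d_y})$, so that cluster points exist at all; the argument below then applies to any such cluster point, so the conclusion is nontrivial.

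First, I would fix a cluster point $\gamma$ and extract a subsequence $\gamma_{k_j}\xrightarrow[]{w}\gamma$. The liminf half of the $\Gamma$-convergence definition, applied to this convergent subsequence, yields
\begin{equation*}
    \sF(\gamma)\leq \liminf_{j\to\infty}\sF_{k_j}(\gamma_{k_j}).
\end{equation*}
Next, for an arbitrary competitor $\eta \in \cP(\RR^{d_x}\times\RR^{d_y})$, the limsup (``recovery sequence'') half furnishes a sequence $\{\eta_k\}_{k\in\NN}$ with $\eta_k\xrightarrow[]{w}\eta$ and $\limsup_{k\to\infty}\sF_k(\eta_k)\leq \sF(\eta)$.

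The key step is to exploit the optimality hypothesis: since $\gamma_k$ minimizes $\sF_k$, we have $\sF_k(\gamma_k)\leq \sF_k(\eta_k)$ for every $k$, and a fortiori along the chosen subsequence. Chaining the resulting inequalities gives
\begin{equation*}
    \sF(\gamma)\leq \liminf_{j\to\infty}\sF_{k_j}(\gamma_{k_j}) \leq \liminf_{j\to\infty}\sF_{k_j}(\eta_{k_j}) \leq \limsup_{k\to\infty}\sF_k(\eta_k) \leq \sF(\eta),
\end{equation*}
where the penultimate step uses the elementary fact that $\liminf_j a_{k_j}\leq \limsup_k a_k$ for any real sequence. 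Since $\eta\in\cP(\RR^{d_x}\times\RR^{d_y})$ was arbitrary, $\gamma$ minimizes $\sF$.

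I do not anticipate a serious obstacle here, as the argument is essentially a bookkeeping exercise in the definition of $\Gamma$-convergence. The only subtle point worth flagging is the potential mismatch between the subsequence $(k_j)$ along which $\gamma_{k_j}$ converges to the cluster point and the full index along which the recovery sequence $\eta_k$ is constructed; the above chain resolves this by evaluating the recovery-sequence bound along the subsequence $(k_j)$ via $\liminf\leq\limsup$, so no reindexing or diagonalization of $\eta_k$ is necessary.
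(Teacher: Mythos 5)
Your proof is correct and is the standard argument for this classical fact; the paper itself does not prove \cref{prop:cluster_optimal} but simply cites it (Proposition 7.18 of Dal Maso's book), and the textbook proof of that result is exactly the chain $\sF(\gamma)\leq\liminf_j\sF_{k_j}(\gamma_{k_j})\leq\liminf_j\sF_{k_j}(\eta_{k_j})\leq\limsup_k\sF_k(\eta_k)\leq\sF(\eta)$ that you write, with equi-coercivity used only to guarantee that cluster points exist. The single hair worth combing is that the liminf inequality in the paper's definition is stated for full sequences, whereas you apply it along the subsequence $\gamma_{k_j}\xrightarrow{w}\gamma$; this is immediate once you extend $(\gamma_{k_j})_j$ to a full sequence converging to $\gamma$ (e.g., equal to $\gamma$ off the subsequence) and use that the liminf over the full sequence is no larger than the liminf over the subsequence.
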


For any positive sequence $\eps_k$ decreasing to $\eps\geq 0$ and with $\sG(\pi)\coloneqq \|\Gamma\|^2_{L^2(\pi\otimes\pi)}$ as before, consider the functionals
\begin{align*}
    \sF_k(\gamma)&=\begin{cases} \sG(\gamma)+\eps_k \dkl(\gamma\|\mu\otimes\nu),&\text{ if } \gamma\in\Pi(\mu,\nu)\\+\infty,&\text{ o.w.}\end{cases}\\
    \sF(\gamma)&= \begin{cases} \sG(\gamma)+\eps \dkl(\gamma\|\mu\otimes\nu),&\text{ if } \gamma\in\Pi(\mu,\nu)\\+\infty,&\text{ o.w.}\end{cases}.
\end{align*}
Clearly, these functionals are equi-coersive since $\Pi(\mu,\nu)$ is tight. For any $\eps>0$, the two conditions of $\Gamma$-convergence holds by lower semicontinuity of the KL-divergence and simply picking $\gamma_k=\gamma$, for all $k\in\NN$

To prove the claim, it remains to establish $\Gamma$-convergence for $\eps_k\to\eps=0$. The first condition is easy to verify as the entropy term is always positive and $\sF$ is lower semicontinuous. Indeed, $\dkl$ is lower semicontinuous in both of its arguments, while $\sG$ is weakly continuous on $\Pi(\mu,\nu)$ for $\mu,\nu$ with bounded 4th moments. For the second condition, we construct the appropriate sequence. We start from an arbitrary convergent sequence  $\tilde\gamma_k\to \gamma$, and assume without loss of generality that $\gamma,\tilde\gamma_k\in\Pi(\mu,\nu)$, for all $k\in\NN$. We now use the block approximation idea from the previous section (cf. the start of \cref{proof:prop:cont_of_cost_in_eps}): define a sequence of diameters $\ell_k=\eps_k\land1$ and, using the notation from previous section, consider the block approximation $\gamma_k\coloneqq \tilde\gamma_k^{\ell_k}\in\Pi(\mu,\nu)$. Using the KL divergence bounds leading to \eqref{eq:approx_final_bound_delta} and the determinant bound after \eqref{eq:for_later}, we obtain
\begin{align*}
    &\eps_k \dkl(\gamma_k\|\mu\otimes\nu)  \lesssim \eps_k \Big[d_x +d_y+ \log\det\left(d_x! \big(M_2(\mu)+d_x\big(1+M_1(\mu)\big)\big)^{d_x}\right)\\
    & \hspace{9em}+ \log\det\left(d_x! \big(M_2(\mu)+d_x\big(1+M_1(\mu)\big)\big)^{d_x}\right)- (d_x+d_y)\log(\ell)\Big].
\end{align*}
Also note that $\gamma_k\xrightarrow[]{w}\gamma$, since for any 1-Lipschitz function $f$, we have 
\[\left|\int f d (\gamma_k-\tilde{\gamma}_k)\right|\leq \sum_i \tilde{\gamma}_k(Q_i)\bigg|\sup_{x\in Q_i}f-\inf_{x\in Q_i}f\bigg|\lesssim_{d_x,d_y}\ell_k,\]
and consequently $\sG(\gamma_k)\rightarrow \sG(\gamma)$. Combining the pieces, we deduce that 
\begin{align*}
    \sG(\gamma) = \lim_{k\to\infty} \sG(\gamma_k) + \eps_k \dkl(\gamma_k\|\mu\otimes\nu).
\end{align*}
This concludes the proof of $\Gamma$-convergence, from which weak convergence of optimal couplings as stated in \cref{prop:convergence-of-plans} follows by \cref{prop:cluster_optimal}. \qed

 {
\section{Proof of Lemma \ref{lemma:equivalence_gw_w}}\label{appen:proof:lemma:equivalence_gw_w}

Throughout this proof, we omit the dummy variables from the probability measure in our notation for integrals, writing $\int f(x,y,x',y')d\pi\otimes\pi$ instead of $\int f(x,y,x',y')d\pi\otimes\pi(x,y,x',y')$. For the first inequality, we now have
\begin{align*}
    &\GW_{p,q}(\mu,\nu)^p\\
    &=\int \big|\|x-x'\|^q-\|y-y'\|^q\big|^p d\pi\otimes\pi\\
    &\leq q^p \int \left(\|x-x'\|^{q-1}+\|y-y'\|^{q-1}\right)^p \big(\|x-y\|+\|x'-y'\|\big)^p  d\pi\otimes\pi \\
    &\leq q^p\mspace{-3mu}\left(\int\mspace{-3mu}\big(\|x\mspace{-3mu}-\mspace{-3mu}x'\|^{q-1}\mspace{-3mu}+\mspace{-3mu}\|y\mspace{-3mu}-\mspace{-3mu}y'\|^{q-1}\big)^{\frac{pq}{q-1}}d\pi\otimes \pi\mspace{-3mu}\right)^{\frac{q-1}{q} }   \mspace{-3mu}\left(\int\mspace{-3mu}\big(\|x\mspace{-3mu}-\mspace{-3mu}y\|\mspace{-3mu}+\mspace{-3mu}\|x'\mspace{-3mu}-\mspace{-3mu}y'\|\big)^{qp}  d\pi\otimes \pi \right)^{\frac 1q} \\
    &\leq q^p 2^{2p-1}\mspace{-3mu}\left(\int\mspace{-3mu}  \|x\mspace{-3mu}-\mspace{-3mu}x'\|^{pq}\mspace{-3mu}+\mspace{-3mu}\|y\mspace{-3mu}-\mspace{-3mu}y'\|^{pq} d\pi\otimes \pi\right)^{\frac{q-1}{q}}  \left(\int\mspace{-3mu}\|x\mspace{-3mu}-\mspace{-3mu}y\|^{qp}\mspace{-3mu}+\mspace{-3mu}\|x'\mspace{-3mu}-\mspace{-3mu}y'\|^{qp} d\pi\otimes \pi  \right)^{\frac 1q} \\
    &\leq q^p 2^{pq+p-1+1/q}\big(M_{pq}(\mu)+M_{pq}(\nu) \big)^{\frac{q-1}{q}}  \left(\int  \|x-y\|^{qp}d\pi\right)^{\frac 1q},
\end{align*}
where the second line follows by mean value theorem for the function $x\mapsto x^q$, while the third line uses by H\"older's inequality.

\medskip
For the second inequality, suppose first that $\mu,\nu$ are centered. We may now expand 
\begin{align*}
    &\GW(\mu,\nu)^2\\
    &\hspace{1em}=\mspace{-5mu}  \inf_{\pi\in\Pi(\mu,\nu)}  \mspace{-6mu}2\big(M_2(\mu)\mspace{-3mu}-\mspace{-3mu}M_2(\nu)\big)^2 \mspace{-4mu}+\mspace{-2mu} 2 \mspace{-4mu}\int\mspace{-4mu} \big(\|x\|^2\mspace{-3mu}-\mspace{-3mu}\|y\|^2\big)^2 d\pi \mspace{-2mu}+\mspace{-2mu} 4 \mspace{-3mu}\int\mspace{-3mu} \big(\langle x,x'\rangle\mspace{-2mu}-\mspace{-2mu}\langle y,y'\rangle\big)^2 d\pi\mspace{-2mu}\otimes\mspace{-2mu}\pi\\
    &\hspace{1em}=\mspace{-2mu}  2\big(M_2(\mu)\mspace{-3mu}-\mspace{-3mu}M_2(\nu)\big)^2 \mspace{-4mu}+\mspace{-6mu} \inf_{\pi\in\Pi(\mu,\nu)} \mspace{-4mu}2\mspace{-4mu}\int\mspace{-4mu} \big(\|x\|^2\mspace{-3mu}-\mspace{-3mu}\|y\|^2\big)^2 d\pi \mspace{-2mu}+\mspace{-2mu} 4 \big(\|\bsigma_\mu\|_\F^2\mspace{-2mu} +\mspace{-2mu} \|\bsigma_\nu\|_\F^2 \mspace{-2mu} -\mspace{-2mu}  2\|\bgamma_\pi\|_\F^2 \big),
\end{align*}
where 
$\bm\Gamma_\pi=\int x y^\intercal d\pi$ is the cross-covariance of $(X,Y)\sim\pi$.

As the bound trivializes when $\GW(\mu,\nu)=0$, suppose that $\GW(\mu,\nu)^2=\iota>0$ and let $\pi$ be the corresponding optimal coupling. This implies $4 \big(\|\bsigma_\mu\|_\F^2+\|\bsigma_\nu\|_\F^2 - 2\|\bgamma_\pi\|_\F^2 \big)\leq \iota$. Consider the singular value decomposition $\bgamma_\pi=\bP\blambda \bQ^\intercal$, where $\bP,\bQ\in O(d)$, and $\bgamma$ is diagonal. 
By invariance of the GW distance to rotations and since $\tilde\pi=(\bP^\intercal,\bQ^\intercal)_\sharp\pi$ is optimal for $\mathsf{GW}(\bP^\intercal_\sharp\mu,\bQ^\intercal_\sharp\nu)$, we similarly obtain $4 \big(\|\bP^\intercal\bsigma_\mu \bP\|_\F^2+\|\bQ^\intercal\bsigma_\nu \bQ\|_\F^2 - 2\|\blambda \|_\F^2 \big)\leq \iota$.
Denote the singular values of a matrix $\mathbf{A}\in\RR^{d\times d}$ by $\sigma_1(\mathbf{A}),\ldots,\sigma_d(\mathbf{A})$. Also denote the diagonal entries of $\bP\bsigma_\mu \bP^\intercal, \bQ\bsigma_\nu \bQ^\intercal$ as $a_1,\cdots,a_d$ and $b_1,\cdots, b_d$, respectively. We thus obtain
\begin{align*}
    \sum_{i=1}^d \big(a_i^2+b_i^2 -2 \sigma_i(\bgamma_\pi)^2\big) \leq \iota.
\end{align*}
Observing that $a_i+b_i-2\sigma_i(\bgamma_\pi)\geq 0$, as $\int x_i^2+y_i^2-2x_iy_i d \tilde\pi \geq0$, we further have
\begin{align*}
    \sqrt{\frac{a_i^2+b_i^2}{2}}\geq \frac{a_i+b_i}{2}\geq \sigma_i(\bgamma_\pi),\quad\forall i=1,\ldots, d,
\end{align*}
which implies
\begin{align*}
    \iota&\geq8\sum_{i=1 }^d\left(\sqrt{\frac{a_i^2+b_i^2}{2}}+\sigma_i(\bgamma_\pi)\right)\left(\sqrt{\frac{a_i^2+b_i^2}{2}}-\sigma_i(\bgamma_\pi)\right)\\
    &\geq 8 \min_{i=1,\ldots,d} \sqrt{\frac{a_i^2+b_i^2}{2}} \cdot \sum_{i=1}^d \left(\sqrt{\frac{a_i^2+b_i^2}{2}}-\sigma_i(\bgamma_\pi)\right).
\end{align*} 
Having that, we compute
\begin{align*}
    \W_2\big(\mu,(\bP \bQ^\intercal)_\sharp\nu\big)^2 &= \W_2\big(\bP^\intercal_\sharp\mu,\bQ^\intercal_\sharp\nu\big)^2\\
    &\leq \int \|x-y\|^2 d\tilde\pi\\ 
    &= \sum_{i=1}^d\big( a_i + b_i - 2\sigma_i(\bgamma_\pi)\big)\\
    &\leq \sum_{i=1 }^d\left(\sqrt{\frac{a_i^2+b_i^2}{2}}-\sigma_i(\bgamma_\pi)\right)\\
    &\leq \frac{\iota}{8 \min_i \sqrt{\frac{a_i^2+b_i^2}{2}}}.
\end{align*}
Notice that $\lambda_{\mathrm{min}}(\bsigma_\mu)\leq a_i$ and $\lambda_{\mathrm{min}}(\bsigma_\nu)\leq b_i$, for all $i=1,\ldots,d$, and use the fact that $\bP \bQ^\intercal\in O(d)$ to conclude that 
\begin{align*}
    \Big(  32\big(\lambda_{\mathrm{min}}(\bsigma_\mu)^2+\lambda_{\mathrm{min}}(\bsigma_\nu)^2\big)\Big)^{\frac 14}\inf_{\mathbf{U}\in O(d)} \mathsf{W}_2(\mu,\mathbf{U}_\sharp\nu) \leq  \GW(\mu,\nu),
\end{align*}
whenever $\mu,\nu$ are centered. To remove the centering assumption one only has to replace $O(d)$ above with the isometry group $E(d)$, which contains translations in addition to rotations.\qed

}

\section{Proofs of Lemmas for Theorem \ref{thm:egw_sample_complex}}\label{appen:proof:egw_sample_complex_lemmas}

\subsection{Proof of \cref{lemma:bias}}\label{appen:proof:lemma:bias}

Let $\bar x_n \coloneqq  \int x \hat{\mu}_n$, $\bar y_n \coloneqq  \int y\hat{\nu}_n$ denote the sample means and define $\tilde{\mu}_n,\tilde{\nu}_n$ as the centered versions of the empirical distributions, i.e., $\tilde{\mu}_n=(\cdot-\bar x_n)_\sharp \hat{\mu}_n$ and similarly for $\tilde\nu$. Note that $\EGW_\eps(\hat{\mu}_n,\hat{\nu}_n) = \EGW_\eps(\tilde{\mu}_n,\tilde{\nu}_n) = \EGW^1(\tilde{\mu}_n,\tilde{\nu}_n)+\EGW^2_{\eps} (\tilde{\mu}_n,\tilde{\nu}_n)$ and so
\begin{align*}
    \EE\big[\big|\EGW_\eps(\mu,\nu)-\EGW_\eps(\hat{\mu}_n,\hat{\nu}_n)\big|\big] &\leq \EE\big[\big|\EGW^1(\mu,\nu)\mspace{-3mu}-\mspace{-3mu}\EGW^1(\hat{\mu}_n,\hat{\nu}_n)\big|\big] \mspace{-3mu}+ \mspace{-3mu}\EE\big[\big|\EGW^2_{\eps} (\mu,\nu)\mspace{-3mu}-\mspace{-3mu}\EGW^2_{\eps} (\hat{\mu}_n,\hat{\nu}_n)\big|\big] \\
    &\quad \mspace{-3mu}+ \mspace{-3mu}\EE\big[\big|\EGW^1(\hat{\mu}_n,\hat{\nu}_n)\mspace{-3mu}-\mspace{-3mu}\EGW^1(\tilde{\mu}_n,\tilde{\nu}_n)\big|\big]\mspace{-3mu} + \mspace{-3mu}\EE\big[\big|\EGW^2_{\eps} (\hat{\mu}_n,\hat{\nu}_n)\mspace{-3mu}-\mspace{-3mu}\EGW^2_{\eps} (\tilde{\mu}_n,\tilde{\nu}_n)\big|\big]
\end{align*}
We proceed by bounding the terms in the second line. For the first one, observe
\begin{align*}
    \EE\mspace{-3mu}\big[\big|\EGW^1(\hat{\mu}_n,\hat{\nu}_n)\mspace{-3mu}-\mspace{-3mu}\EGW^1(\tilde{\mu}_n,\tilde{\nu}_n)\big|\big] &\lesssim \EE\mspace{-3mu}\left[\left| \int \Big( \|x-\bar x_n\|^2\|y-\bar y_n\|^2 - \|x\|^2\|y\|^2\Big)  d  \hat{\mu}_n\mspace{-3mu}\otimes\mspace{-3mu}\hat{\nu}_n(x,y)\right|\right]\\
    & = \EE\mspace{-3mu}\left[\left| \|\bar x_n\|^2\|\bar y_n\|^2 \mspace{-5mu} -\mspace{-5mu}\|\bar x_n\|^2\mspace{-7mu}\int\mspace{-5mu}\|y\|^2  d \hat{\nu}_n\mspace{-1mu}(y) \mspace{-5mu} -\mspace{-5mu}\|\bar y_n\|^2\mspace{-7mu}\int\mspace{-5mu}\|x\|^2 d \hat{\mu}_n\mspace{-1mu}(x) \right|\right]\\
    &\lesssim \frac{\sigma^2}{n}.\numberthis\label{eq:sample_complex_bias_S1}
\end{align*}
In the last step above we have used the following bound on the 4th absolute moment of the sample mean. Write $\bar x_n=\frac{1}{n}\sum_{i=1}^n X_i$, where $X_1,\ldots,X_n$ are the i.i.d. samples defining the empirical measure $\hat{\mu}_n$. Consider:
\begin{align*}
    \EE[\|\bar x_n\|^4] &
    = \frac{1}{n^4}\EE\left[\left(\sum\nolimits_{i,j} \langle X_i,X_j\rangle \right)^2 \right] \\
    &= \frac{1}{n^4}\EE\left[2\sum\nolimits_{i\neq j} \langle X_i,X_j\rangle^2 +   \sum\nolimits_{i,j}\langle X_i,X_i\rangle \langle X_j,X_j\rangle   \right]\\
    &= \frac{1}{n^4}\EE\left[  2\sum\nolimits_{i\neq j} \langle X_i,X_j\rangle^2 +   \sum\nolimits_{i\neq j}\langle X_i,X_i\rangle \langle X_j,X_j\rangle + \sum\nolimits_i \langle X_i,X_i\rangle^2 \right] \\
    &=  \frac{1}{n^4}\left(2n(n-1) \|\Sigma_\mu\|_\F^2 + n(n-1)M_2(\mu)^2 + nM_4(\mu)\right)\\
    &\leq \frac{3n^2 M_4(\mu)}{n^4}\\
    &\leq \frac{6\sigma^2}{n^2}
\end{align*}
where the two last steps bound $\|\Sigma_\mu\|_\F^2\leq M_2(\mu)^2\leq M_4(\mu)$ and $M_4(\mu)\leq 2\sigma^2$.
  
\medskip
It remains to analyze the centering bias of $\EGW^2_{\eps} $. Consider
\begin{align*}
    \EE\big[\big|&\EGW^2_{\eps} (\hat{\mu}_n,\hat{\nu}_n)-\EGW^2_{\eps} (\tilde{\mu}_n,\tilde{\nu}_n)\big|\big]\\ &\lesssim \EE\left[\sup_{\pi\in\Pi( \hat{\mu}_n,\hat{\nu}_n )} \left|\int \Big( \|x-\bar x_n\|^2\|y-\bar y_n\|^2 - \|x\|^2\|y\|^2\Big)  d  \pi(x,y)\right|\right ]\\
    &+ \EE\left[\sup_{\pi\in\Pi( \hat{\mu}_n,\hat{\nu}_n )}\left|  \sum_{\substack{1\leq i\leq d_x\\1\leq j \leq d_y}}\left(\int  x_iy_j d \pi(x,y)\right)^{\mspace{-2mu}2} \mspace{-5mu}- \mspace{-3mu}\left(\int  (x_i-\bar{x}_{n,i})(y_j-\bar{y}_{n,j}) d \pi(x,y)\right)^{\mspace{-2mu}2} \right|\right].\numberthis\label{eq:sample_complex_bias_S2}
\end{align*}
For the first term above, we have
\begin{align*}
    &\EE\left[\sup_{\pi\in\Pi( \hat{\mu}_n,\hat{\nu}_n )} \left|\int \Big( \|x-\bar x_n\|^2\|y-\bar y_n\|^2 - \|x\|^2\|y\|^2\Big)  d  \pi(x,y)\right|\right ] \\
    &\qquad= \EE\Bigg[\sup_{\pi\in\Pi(\hat{\mu}_n,\hat{\nu}_n )}\bigg|2\int \Big(2\langle x, \bar x_n\rangle \langle y,\bar y_n\rangle -\langle x, \bar x_n\rangle \|y\|^2 -\langle y,\bar y_n\rangle \|x\|^2\Big)
     d  \pi(x,y) \\
    &\qquad\qquad\qquad\qquad-3\|\bar x_n\|^2\|\bar y_n\|^2 + \|\bar x_n\|^2\int\|y\|^2 d \hat{\nu}_n(y) + \|\bar y_n\|^2\int\|x\|^2 d \hat{\mu}_n(x)\bigg|\Bigg]\\
    &\quad\lesssim \frac{\sigma^2}{\sqrt{n}}  ,
\end{align*}
using the same fourth moment expansion of $\bar x_n$ as above. For the second term, we have
\begin{align*}
    &\EE\left[\sup_{\pi\in\Pi( \hat{\mu}_n,\hat{\nu}_n )}\left|  \sum_{\substack{1\leq i\leq d_x\\1\leq j \leq d_y}}\Big(\int  x_iy_j d \pi(x,y)\Big)^2 - \Big(\int  (x_i-\bar{x}_{n,i})(y_j-\bar{y}_{n,j}) d \pi(x,y)\Big)^2 \right|\right]\\
    &\qquad=  \EE\left[\sup_{\pi\in\Pi( \hat{\mu}_n,\hat{\nu}_n )}\left|   \sum_{\substack{1\leq i\leq d_x\\1\leq j \leq d_y}}\left(\int  x_iy_j d \pi(x,y) -\int  (x_i-\bar{x}_{n,i})(y_j-\bar{y}_{n,j}) d \pi(x,y)\right)\right.\right.\\    &\qquad\qquad\qquad\ \ \ \left.\left.\phantom{\sum_{\substack{1\leq i\leq d_x\\1\leq j \leq d_y}}}\times\left(\int  x_iy_j d \pi(x,y) +\int  (x_i-\bar{x}_{n,i})(y_j-\bar{y}_{n,j}) d \pi(x,y)\right)  \right|\right]
\end{align*}
with
\begin{align*}
    &\EE\left[\sup_{\pi\in\Pi( \hat{\mu}_n,\hat{\nu}_n )}   \sum_{\substack{1\leq i\leq d_x\\1\leq j \leq d_y}} \left(\int  x_iy_j d \pi(x,y) -\int  (x_i-\bar{x}_{n,i})(y_j-\bar{y}_{n,j}) d \pi(x,y)\right)^2\right]\\
    &\qquad\qquad\qquad\qquad=  \EE\left[\sup_{\pi\in\Pi(\hat{\mu}_n,\hat{\nu}_n )}  \sum_{\substack{1\leq i\leq d_x\\1\leq j \leq d_y}}\left(\int x_i \bar{y}_{n,j} + \bar{x}_{n,i} y_j- \bar{x}_{n,i}\bar{y}_{n,j}  d \pi(x,y)\right)^2\right]\\
    &\qquad\qquad\qquad\qquad\lesssim \sum_{\substack{1\leq i\leq d_x\\1\leq j \leq d_y}}\EE\left[  \int   (x_i \bar{y}_{n,j})^2   d \hat{\mu}_n(x)+ \int(\bar{x}_{n,i} y_j)^2 d \hat{\nu}_n(x) +  (\bar{x}_{n,i}\bar{y}_{n,j})^2   \right ]\\
    &\qquad\qquad\qquad\qquad=  \EE\left[  \|\bar y_n\|^2 \int   \|x\|^2  d \hat{\mu}_n(x)+ \|\bar x_n\|^2 \int   \|y\|^2  d \hat{\nu}_n(y) +  \|\bar x_n\|^2\|\bar y_n\|^2    \right]\\
    &\qquad\qquad\qquad\qquad\lesssim \frac{\sigma^2}{n}
\end{align*}
and
\begin{align*}
&\EE\left[\sup_{\pi\in\Pi(\hat{\mu}_n,\hat{\nu}_n )}   \sum_{\substack{1\leq i\leq d_x\\1\leq j \leq d_y}} \left(\int  x_iy_j d \pi(x,y) +\int  (x_i-\bar{x}_{n,i})(y_j-\bar{y}_{n,j}) d \pi(x,y)\right)^2\right]\\
    &=  \EE\left[\sup_{\pi\in\Pi(\hat{\mu}_n,\hat{\nu}_n )}  \sum_{\substack{1\leq i\leq d_x\\1\leq j \leq d_y}}\left(\int    2x_iy_j  -x_i \bar{y}_{n,j}  - \bar{x}_{n,i} y_j + \bar{x}_{n,i}\bar{y}_{n,j} d \pi(x,y)\right)^2 \right ]\\
    &\lesssim \mspace{-3mu}\sum_{\substack{1\leq i\leq d_x\\1\leq j \leq d_y}}\mspace{-3mu}\EE\mspace{-3mu}\left[  \int\mspace{-3mu}   (x_i\bar{y}_{n,j})^2   d \hat{\mu}_n\mspace{-2mu}(x)\mspace{-3mu}+\mspace{-3mu} \int\mspace{-3mu}(\bar{x}_{n,i} y_j)^2 d \hat{\nu}_n\mspace{-2mu}(y) +  (\bar{x}_{n,i}\bar{y}_{n,j})^2   \mspace{-3mu}+\mspace{-3mu}  \int \mspace{-3mu}x_i^2 d \hat{\mu}_n\mspace{-2mu}(x)\mspace{-3mu}\int\mspace{-3mu} y_j^2  d \hat{\nu}_n\mspace{-2mu}(y)\right ]\\
    &=\EE\left[  \|\bar y_n\|^2 \int   \|x\|^2  d \hat{\mu}_n(x)+ \|\bar x_n\|^2 \int   \|y\|^2  d \hat{\nu}_n(y)+  \|\bar x_n\|^2\|\bar y_n\|^2\right]+ \EE\big[M_2(\hat{\mu}_n)M_2(\hat{\nu}_n)\big]\\
    &\lesssim \sigma^2.
\end{align*}
Combine the pieces, we obtain $\EE\big[\big|\EGW^2_{\eps} (\hat{\mu}_n,\hat{\nu}_n)-\EGW^2_{\eps} (\tilde{\mu}_n,\tilde{\nu}_n)\big|\big] \lesssim \sigma^2n^{-1/2}$, which together with \eqref{eq:sample_complex_bias_S1} concludes the proof.\qed

\subsection{Proof of \cref{lemma:EGW_S1}}\label{appen:proof:lemma:EGW_S1}

First, rewrite
\begin{align*}
&\EGW^1(\mu,\nu)\\
&=\int \|x-x'\|^4 d  \mu\otimes\mu(x,x') + \int\|y-y'\|^4 d  \nu\otimes\nu(y,y') -4\int\|x\|^2\|y\|^2  d  \mu\otimes\nu(x,y)\\
&=2 \big(M_4(\mu) +M_4(\nu)\big) +2\big(M_2(\mu)^2+M_2(\nu)^2\big)
  + 4 \big(\|\Sigma_\mu\|_\F^2+\|\Sigma_\nu\|_\F^2\big) -4M_2(\mu)M_2(\nu).
\end{align*}
With this expansion, the empirical estimation error of $\EGW^1$ can be bounded as 
\begin{align*}
&    \EE\big[\big|\EGW^1(\mu,\nu) - \EGW^1(\hat{\mu}_n,\hat{\nu}_n)\big|\big]\\
    &\lesssim \EE\big[\big|M_4(\mu)\mspace{-3mu}-\mspace{-3mu}M_4(\hat{\mu}_n)\big|\big] \mspace{-3mu}+\mspace{-3mu} \EE\big[\big|M_4(\nu)\mspace{-3mu}-\mspace{-3mu}M_4(\hat{\nu}_n)\big|\big]\mspace{-3mu}+\mspace{-3mu} \EE\big[\big|M_2(\mu)M_2(\nu)\mspace{-3mu}-\mspace{-3mu}M_2(\hat{\mu}_n)M_2(\hat{\nu}_n)\big|\big]\\   
    &+ \sqrt{\EE\left[\| \Sigma_\mu-\Sigma_{\hat{\mu}_n}\|_\F^2\right]\EE\left[\|\Sigma_\mu+\Sigma_{\hat{\mu}_n}\|_\F^2\right]}+  \sqrt{\EE\left[\| \Sigma_\nu-\Sigma_{\hat{\nu}_n}\|_\F^2\right]\EE\left[\| \Sigma_\nu+\Sigma_{\hat{\nu}_n}\|_\F^2\right]}\\
    &+\sqrt{\EE\Big[\big(M_2(\mu)+M_2(\hat{\mu}_n)\big)^{2}\Big]\EE\Big[\big(M_2(\mu)-M_2(\hat{\mu}_n)\big)^{2}\Big]}\\
    &\qquad\qquad\qquad\qquad\qquad\qquad\qquad+\sqrt{\EE\Big[\big(M_2(\nu)+M_2(\hat{\nu}_n)\big)^{2}\Big]\EE\Big[\big(M_2(\nu)-M_2(\hat{\nu}_n)\big)^{2}\Big]}.\numberthis\label{eq:S1_decomposition}
\end{align*}
For any distribution $\eta\in\cP(\RR^d)$ that is 4-sub-Weibull with parameter $\sigma^2$, we have
\begin{align*}
\|\Sigma_\eta\|_\F^2 &\leq M_4(\eta) \leq 2\sqrt{2}\sigma^2\\
\EE\left[\| \Sigma_\eta-\Sigma_{\hat\eta_n}\|_\F^2\right] &\leq \frac{M_4(\eta)}{n} \leq \frac{2\sqrt{2}\sigma^2}{n},\\
\EE\Big[\big|M_k(\eta)-M_k(\hat{\eta}_n)\big|^2\Big]&\leq \frac{M_{2k}(\eta)}{n}\leq \frac{(2\sigma^2)^{k/2}(k!)^{1/2}}{n},\quad \forall k\in\NN.
\end{align*}
As $\mu,\nu$ are assumed to satisfy the sub-Weibull condition, inserting the above bounds into \eqref{eq:S1_decomposition} yields
\begin{align*}
    \EE\big[\big|\EGW^1(\mu,\nu) - \EGW^1(\hat{\mu}_n,\hat{\nu}_n)\big|\big] 
    \lesssim \frac{1+\sigma^4}{\sqrt{n}}.
\end{align*}

\subsection{Proof of \cref{lemma:smoothness_of_EGW_potential}}\label{appen:proof:lemma:smoothness_of_EGW_potential}

For any fixed $\mathbf{A}\in\cD_M$, the existence of optimal potentials $(\varphi,\psi)\in  L^1(\mu)\times L^1(\nu)$ follows by standard EOT arguments; see, e.g., \cite[Lemma 1]{goldfeld2022limit}. Also recall that EOT potentials are unique up to additive constants in the sense that if $(\tilde\varphi,\tilde\psi)$ is another pair of EOT potentials, then there exists a constant $a\in\RR$ such that $\tilde\varphi=\varphi+a$ $\mu$-a.e. and $\tilde\psi=\psi-a$ $\nu$-a.e. Thus, let $(\varphi_0,\psi_0)\in  L^1(\mu)\times L^1(\nu)$ be optimal EOT potentials for the cost $c_{\mathbf{A}}$ and assume, without loss of generality, that they are normalized such that $\int \varphi_0 d \mu= \int \psi_0 d \nu = \frac{1}{2} \mathsf{OT}_{\mathbf{A},1}(\mu,\nu)$.

Recall that the optimal potentials satisfies the Schr\"{o}dinger system from \eqref{EQ:Schrodinger}. Define new functions $\varphi$ and $\psi$ as
\begin{align*}
\varphi(x) &\coloneqq  -\log\int e^{\psi_0(y) - c_{\mathbf{A}}(x,y) }  d \nu(y),\qquad x\in\RR^{d_x}\\
\psi(y) &\coloneqq  -\log\int e^{\varphi(x)- c_{\mathbf{A}}(x,y)}  d \mu(x),\ \qquad y\in\RR^{d_y}.
\end{align*}

These integrals are clearly well-defined as the integrands are everywhere positive, and the $\varphi_0,\psi_0$ functions are defined on the supports of, $\mu,\nu$ respectively. We next show that $\varphi,\psi$ are pointwise finite. For the upper bound, by Jensen's inequality, we have
\begin{equation}
\varphi(x)\leq \int c_{\mathbf{A}}(x,y)-\psi_0(y) d \nu(y)\leq   4\sigma^2+8M\sqrt{2\sigma d_xd_y}\left(\sqrt{\sigma}+2\|x\|^2\right),
\label{eq:EGW_sample_comlex_phi_bound}
\end{equation}
with the second inequality following from $\int \psi_0 d \nu = \frac{1}{2} \mathsf{OT}_{\mathbf{A},1}(\mu,\nu)$ and
\begin{align*}
-\mathsf{OT}_{\mathbf{A},1}(\mu,\nu)&\mspace{-3mu}\leq\mspace{-3mu} 4\left(\int \mspace{-3mu}\|x\|^4 \mspace{-2mu}d \mu(x)\mspace{-5mu}\int \mspace{-3mu}\|y\|^4 \mspace{-2mu}d \nu(x)\mspace{-3mu}\right)^{\mspace{-2mu}\frac 12} \mspace{-10mu}+\mspace{-3mu} 16M\mspace{-3mu}\sqrt{d_xd_y}\mspace{-3mu}\left(\int \mspace{-3mu}\|x\|^2 \mspace{-2mu}d \mu(x)\mspace{-5mu}\int \mspace{-3mu}\|y\|^2 \mspace{-2mu}d \nu(y)\mspace{-3mu}\right)^{\mspace{-2mu}\frac 12}\\
&\leq  8\sigma^2+16M\sigma \sqrt{2d_xd_y},
\end{align*}
where we have used $\|\mathbf{A}\|_2\leq \|\mathbf{A}\|_\F\leq \sqrt{d_xd_y}M/2$, the Cauchy–Schwarz inequality, and the fact that the $2k$-th moment of $\sigma^2$-sub-Gaussian distributions is bounded by $(2\sigma^2)^k k!$.
The upper bound holds similarly for $\psi$ globally and for $\psi_0$ on the support of $\nu$. For lower bound, consider the following 
\begin{align*}
    -\varphi(x)&\leq\log\int e^{4\|x\|^2\|y\|^2+16M\sqrt{d_xd_y}\|x\|\|y\| +4\sigma^2+8M\sigma \sqrt{2d_xd_y} + 16M2^{1/4}\sqrt{\sigma d_xd_y}\|y\| }  d \nu(y) \\
    &= 4\sigma^2\mspace{-3mu}+\mspace{-3mu}8M\sigma \sqrt{2d_xd_y}\mspace{-3mu}+\mspace{-3mu}\log\int e^{4\|x\|^2\|y\|^2+16M\sqrt{d_xd_y}\|x\|\|y\| + 16M2^{1/4}\sqrt{\sigma d_xd_y}\|y\|}  d \nu(y) \\
    &\leq 4\sigma^2+8M\sigma \sqrt{2d_xd_y}+8M\sqrt{d_xd_y}\|x\|^2  + 8M2^{1/4}\sqrt{\sigma d_xd_y}\\
    &\qquad\qquad\qquad\quad\quad\quad\quad\ \ +\log\mspace{-3mu}\int e^{4\|x\|^2\|y\|^2+8M\sqrt{d_xd_y}\|y\|^2 + 8M2^{1/4}\sqrt{\sigma d_xd_y} \|y\|^2 }  d \nu(y) \\
    &\leq\log2\mspace{-3mu}+\mspace{-3mu}4\sigma^2\mspace{-3mu}+\mspace{-3mu}8M\sqrt{2\sigma d_xd_y}\mspace{-3mu}\left(\mspace{-3mu}1\mspace{-3mu}+\mspace{-3mu}\sqrt{\sigma}\mspace{-3mu}+\mspace{-3mu}\frac{\|x\|^2}{\sqrt{2\sigma}}\right)\mspace{-3mu}\\
    &\qquad\qquad\qquad\qquad\qquad\qquad   \qquad\qquad\quad+\mspace{-3mu}8\sigma^2\mspace{-3mu}\left(2M\mspace{-3mu}\sqrt{d_xd_y}\big(1\mspace{-3mu}+\mspace{-3mu}\sqrt{2\sigma}\big)\mspace{-3mu}+\mspace{-3mu}\|x\|^2\right)^{\mspace{-2mu}2},
\end{align*}
where we have used the bounds $2\|x\|\|y\|\leq\|x\|^2+\|y\|^2$ and $2\|y\|\leq1+\|y\|^2$ inside the exponents, with the last step also utilizing the fact that $\int e^{t|x|} d \eta(x)\leq 2e^{t^2\sigma^2/2}$ for a $\sigma^2$-sub-Gaussian $\eta$.

Notice that $\varphi$ and $\psi$ are defined on the whole spaces $\RR^{d_x}$ and $\RR^{d_y}$, respectively, with pointwise bounds proven above. Further observe that they are also optimal for $\mathsf{OT}_{\mathbf{A},1}(\mu,\nu)$. Indeed, by Jensen's inequality we have
\begin{align*}
    \int (\varphi_0-\varphi) d \mu +\int (\psi_0-\psi) d \nu&\leq \log\int e^{\varphi_0-\varphi}  d \mu+\log\int e^{\psi_0-\psi}  d \nu\\
    &= \log\int e^{\varphi_0(x)+\psi_0(y)-c_{\mathbf{A}}(x,y)} d \mu\otimes\nu(x,y)\\
    &\qquad\quad+\log\int e^{\varphi(x)+\psi_0(y)-c_{\mathbf{A}}(x,y)}  d \mu\otimes\nu(x,y)\\
    &=0.
\end{align*}
By the strict concavity of the logarithm function we further conclude that $\varphi=\varphi_0$ $\mu$-a.s and $\psi=\psi_0$ $\nu$-a.s. The differentiability of $(\varphi,\psi)$ is clear from their definition, and it thus remains to establish the magnitude of derivative bounds stated in the lemma.

For any multi-index $\alpha$, the multivariate Faa di Bruno formula (see \cite[Corollary 2.10]{Constantine1996AMF}) implies
\begin{equation}
    -D^\alpha \varphi(x) = \sum_{r=1}^{|\alpha|}\sum_{p(\alpha,r)} \frac{\alpha!(r-1)!(-1)^{r-1}}{\prod_{j=1}^{|\alpha|}(k_j!)(\beta_j!)^{k_j}}  \prod_{j=1}^{|\alpha|}\left(\frac{ D^{\beta_j}\int e^{\psi_0(y)-c_{\mathbf{A}}(x,y)}  d \nu(y)}{\int e^{\psi_0(y)-c_{\mathbf{A}}(x,y)}  d \nu(y)} \right)^{k_j}\label{eq:sample_complex_derivative}
\end{equation}
where $p(\alpha,r)$ is the collection of all tuples $(k_1,\cdots,k_{|\alpha|};\beta_1,\cdots,\beta_{|\alpha|})\in\NN^{|\alpha|}\times\NN^{d_x\times|\alpha|}$ satisfying $\sum_{i=1}^{|\alpha|} k_i=r,\sum_{i=1}^{|\alpha|} k_i\beta_i=|\alpha|$, and for which there exists $s\in\{1,\ldots,|\alpha|\}$ such that $k_i=0$ and $\beta_i=0$ for all $i=1,\ldots |\alpha|-s$, $k_i>0$ for all $i=|\alpha|-s+1,\ldots,|\alpha|$, and $0\prec\beta_{|\alpha|-s+1}\prec\cdots\prec\beta_{|\alpha|}$. 
For a detailed discussion of this set including the linear order $\prec$, please refer to \cite{Constantine1996AMF}. For the current proof we only use the fact that the number of elements in this set solely depends on $|\alpha|$ and $r$. Given the above, it clearly suffices to bound $| D^{\beta_j}\int e^{\psi_0(y)-c_{\mathbf{A}}(x,y)}  d \nu(y)|$. First, we apply the same formula to $D^{\beta_j} e^{-c_{\mathbf{A}}(x,y)}$ and obtain
\begin{align*}
     D^{\beta_j} e^{-c_{\mathbf{A}}(x,y)} = \sum_{r'=1}^{|\beta_j|}\sum_{p(\beta_j,r')} \frac{\beta_j!}{\prod_{i=1}^{|\beta_j|}(k'_j!)(\eta_i!)^{k'_i}}   e^{-c_{\mathbf{A}}(x,y)} \prod_{i=1}^{|\beta_j|}\Big( D^{\eta_i} \big(4\|x\|^2\|y\|^2+32x^\intercal  \mathbf{A}y \big) \Big)^{k'_i}
\end{align*}
where $p(\beta_j,r')$ is a set of tuples $(k_1',\ldots,k_{|\beta_j|}';\eta_1,\ldots,\eta_{|\beta_j|})\in\NN^{|\beta_j|}\times\NN^{d_x\times|\beta_j|}$ defined similarly to the above. Observe that 
\begin{align*}
    \big|D^{\eta_i} \big(4\|x\|^2\|y\|^2+32x^\intercal  \mathbf{A}y \big)\big|&\leq 8\|y\|^2\big(1+\|x\|\big)+32\big|(\mathbf{A}y)_{m_{\eta_i}}\big|\\
    &\leq 8\|y\|^2\big(1+\|x\|\big)+16\sqrt{d_y}M\|y\|\\
    &\leq 8\|y\|^2\big(1+\|x\|\big)+8\sqrt{d_y}M\big(1+\|y\|^2\big)\\
    &\leq 8\big(1+\|y\|^2\big)\big(1+\sqrt{d_y}M+\|x\|\big)
\end{align*}
where the first inequality is justified as follows. We first observe that for any $\eta_i$ with an entry larger than 2 or that has more than a single nonzero entry, the term nullifies. Hence we denote by $m_{\eta_i}\in[1,d_x]$ the index of the first nonzero entry of $\eta_i$, and upper bound the derivative as follows. For the quadratic term, we either have $2x_{m_{\eta_i}}$ or 2, which is bounded by the sum of their absolute value, while for linear term we have the $m_{\eta_i}$-th entry of $\mathbf{A}y$ or 0. Lastly, we note that the norm of any row of $\mathbf{A}$ is bounded by $\sqrt{d_y}M/2$. Consequently, we obtain
\begin{align*}
    &\left| D^{\beta_j}\int e^{\psi_0(y)-c_{\mathbf{A}}(x,y)}  d \nu(y)\right|\\
    &\qquad\qquad\qquad\qquad\leq C_{\beta_j}\big(1+M\sqrt{d_y}+\|x\|\big)^{|\beta_j|}\int e^{\psi_0(y)-c_{\mathbf{A}}(x,y)}  \big(1+\|y\|^2\big)^{|\beta_j|} d \nu(y).
\end{align*}
Inserting this back into \eqref{eq:sample_complex_derivative}, we proceed to bound $\Big|\frac{\int e^{\psi_0(y)-c_{\mathbf{A}}(x,y)}  (\|y\|^2+1)^{|\beta_j|} d \nu(y)}{\int e^{\psi_0(y)-c_{\mathbf{A}}(x,y)}  d \nu(y)}\Big|$ by splitting the integral in the numerator into $\|y\|<\tau$ and $\|y\|\geq \tau$, for a constant $\tau>0$ to be specified later. For the former part we have \[\left|\frac{\int_{\|y\|\leq\tau} e^{\psi_0(y)-c_{\mathbf{A}}(x,y)}  (\|y\|^2+1)^{|\beta_j|} d \nu(y)}{\int e^{\psi_0(y)-c_{\mathbf{A}}(x,y)}  d \nu(y)}\right|\leq (1+\tau^2)^{|\beta_j|}, \] 
For the latter part, first observe that by \eqref{eq:EGW_sample_comlex_phi_bound} the denominator is bounded as
\begin{align*}
\left|\int_{\RR^{d_y}}e^{\psi_0(y)-c_{\mathbf{A}}(x,y)}  d \nu(y)\right|^{-1}=e^{\varphi(x)}\leq e^{4\sigma^2+8M\sqrt{\sigma d_xd_y}\left(\sqrt{2\sigma}+2^{5/4}\|x\|\right)},
\end{align*}
while for the numerator, we have 
\begin{align*}
    &\int_{\|y\|\geq\tau} e^{\psi_0(y)-c_{\mathbf{A}}(x,y)}  \big(1+\|y\|^2\big)^{|\beta_j|} d \nu(y)\\
    &\leq  e^{4\sigma^2+ 8M\sqrt{\sigma d_xd_y}\left(2^{1/4}+\sqrt{2\sigma}+\frac{\|x\|^2}{\sqrt{\sigma}} \right)}    \mspace{-5mu}\int_{\|y\|\geq\tau} \mspace{-25mu}e^{\left(4\|x\|^2+8\sqrt{d_xd_y}M\mspace{-3mu}\big(1+2^{1/4}\sqrt{\sigma}\big)  \right) \|y\|^2 }\mspace{-3mu}\big(1\mspace{-3mu}+\mspace{-3mu}\|y\|^2\big)^{|\beta_j|} d \nu(y)\\
    &\leq e^{4\sigma^2+ 8M\sqrt{\sigma d_xd_y}\left(2^{1/4}+\sqrt{2\sigma}+\frac{\|x\|^2}{\sqrt{\sigma}} \right)}    \left(\int e^{2\left(4\|x\|^2+8\sqrt{d_xd_y}M\big(1+2^{1/4}\sqrt{\sigma}\big)  \right) \|y\|^2 }  d \nu(y)\right)^{\frac 12}\\
    &\qquad\qquad\qquad\qquad\qquad\qquad\qquad\qquad\qquad\qquad\qquad\times\left(\int_{\|y\|\geq\tau} \big(1+\|y\|^2\big)^{2|\beta_j|} d \nu(y)\right)^{\frac 12}\\
    &\leq 2^{|\beta_j|-\frac{1}{2}} e^{4\sigma^2+ 8M\sqrt{\sigma d_xd_y}\left(2^{1/4}+\sqrt{2\sigma}+\frac{\|x\|^2}{\sqrt{\sigma}} \right) + 16\sigma^2\left(2M\sqrt{d_xd_y}\big(1+2^{1/4}\sqrt{\sigma}\big)+\|x\|^2\right)^2 }\\&\qquad\qquad\qquad\qquad\qquad\qquad\qquad\qquad\qquad\qquad\qquad\times\left( \int_{\|y\|\geq\tau}\big(1+\|y\|^{4|\beta_j|}\big) d \nu(y)\right)^{\frac 12}.
\end{align*}
To bounds the remaining integral on the RHS, consider
\begin{align*}
&\left| \int_{\|y\|\geq\tau}\|y\|^{4|\beta_j|} d \nu(y)\right|\leq e^{-\frac{\tau^4}{4\sigma^2}} \int e^{\frac{\|y\|^4}{4\sigma^2}} \|y\|^{4|\beta_j|} d \nu(y)\leq  e^{-\frac{\tau^4}{4\sigma^2}} \sqrt{2} (2\sigma^2)^{|\beta_j|}\sqrt{(2|\beta_j|)!}\\
&\left| \int_{\|y\|\geq\tau} d \nu(y)\right|\leq 
 e^{-\frac{\tau^4}{4\sigma^2}} \int e^{\frac{\|y\|^4}{4\sigma^2}}  d \nu(y)    \leq \sqrt{2}e^{-\frac{\tau^4}{4\sigma^2}},
\end{align*}
which follow by the Cauchy–Schwarz inequality and since $\nu$ is 4-sub-Weibull. Combining the pieces, we arrive at
\begin{align*}
    &\left|\frac{\int e^{\psi_0(y)-c_{\mathbf{A}}(x,y)}  \big(1+\|y\|^2\big)^{|\beta_j|} d \nu(y)}{\int e^{\psi_0(y)-c_{\mathbf{A}}(x,y)}  d \nu(y)}\right|\\
    &\leq (1\mspace{-2mu}+\mspace{-2mu}\tau^2)^{|\beta_j|} \mspace{-2mu}+\mspace{-2mu}  2^{|\beta_j|-\frac 14}   \left(1\mspace{-2mu}+\mspace{-2mu}\big(2\sigma^2\big)^{|\beta_j|/2}\big((2|\beta_j|)!\big)^{1/4}\right) \\
    &\qquad\quad\times e^{8\sigma^2+8M\sqrt{\sigma d_xd_y}\left(2^{1/4}+ 2\sqrt{2\sigma}+2^{5/4}\|x\|  +\frac{\|x\|^2}{\sqrt{\sigma}}\right) +16\sigma^2\left(2M\sqrt{d_xd_y}\big(1+2^{1/4}\sqrt{\sigma}\big)+\|x\|^2\right)^2 -\frac{\tau^4}{8\sigma^2} }  
\end{align*}
Now choose $\tau>0$ such that
\begin{align*}
    \frac{\tau^4}{8\sigma^2}&\geq8\sigma^2+8M\sqrt{\sigma d_xd_y}\left(2^{1/4}+2\sqrt{2\sigma}+2^{5/4}\|x\| +\frac{\|x\|^2}{\sqrt{\sigma}} \right)\\
    &\qquad\qquad\qquad\qquad\qquad\qquad\qquad\qquad+16\sigma^2\left(2M\sqrt{d_xd_y}\big(1+2^{1/4}\sqrt{\sigma}\big)+\|x\|^2\right)^2 ,
\end{align*}
namely, $\tau^4 = C \big(1+M\sqrt{d_xd_y}\big)^2\Big(1+\sigma^5 +\sigma^4\|x\|^4 \Big)$ for an appropriate universal constant $C>0$. Plugging back, we obtain 
\begin{align*}
    &\left|\frac{\int e^{\psi_0(y)-c_{\mathbf{A}}(x,y)}  (\|y\|^2\mspace{-3mu}+\mspace{-3mu}1)^{|\beta_j|} d \nu(y)}{\int e^{\psi_0(y)-c_{\mathbf{A}}(x,y)}  d \nu(y)}\right|\mspace{-2mu}\\
    &\qquad\qquad\qquad\qquad\qquad\leq \mspace{-2mu}C_{\beta_j}\Big(1\mspace{-3mu}+\mspace{-3mu}\sigma^{|\beta_j|} \mspace{-3mu}+\mspace{-3mu}\big (1\mspace{-3mu}+\mspace{-3mu}M\sqrt{d_xd_y}\big)^{|\beta_j|}\Big( 1\mspace{-3mu}+\mspace{-3mu}\sigma^5\mspace{-3mu}+\mspace{-3mu}\sigma^4\|x\|^4\Big)^{\frac{|\beta_j|}{2}}\Big),
\end{align*}
and thus
\begin{align*}
|D^\alpha \varphi(x)|&\leq C_{\alpha} \big(1\mspace{-3mu}+\mspace{-3mu}M\sqrt{d_y}\mspace{-3mu}+\mspace{-3mu}\|x\|\big)^{|\alpha|}\left(1\mspace{-3mu} +\mspace{-3mu}\sigma^{|\alpha|} \mspace{-3mu}+ \mspace{-3mu}\big(1\mspace{-3mu}+\mspace{-3mu}M\sqrt{d_xd_y}\big)^{|\alpha|}\big(1\mspace{-3mu}+\mspace{-3mu}\sigma^5 \mspace{-3mu}+\mspace{-3mu}\sigma^4\|x\|^4\big)^{\frac{|\alpha|}{2}}\right),
\end{align*}
as claimed.\qed

\section{Proofs of Lemmas for Theorem \ref{thm:GW_sample_complex}}\label{appen:proof:GW_sample_complex_lemmas}

\subsection{Proof of \cref{lemma:smoothness_of_GW_potential}}\label{appen:proof:lemma:smoothness_of_GW_potential}
With some abuse of notation, let $\cX=B_{d_x}(0,R)$ and $\cY=B_{d_y}(0,R)$ be the ambient spaces. 
Recall from \cref{sec:OT_EOT} that, for any $\mathbf{A}\in\cD_{R^2}$, we have $\Phi_\mathbf{A}\subset C_b(\cX)\times C_b(\cY)$ and we may further restrict to pairs of potentials that can be written as $(\varphi^{c\bar{c}},\varphi^c)$, for some $\varphi\in C_b(\cX)$. Since $\varphi^{c\bar{c}c}=\varphi^{c}$, the potentials are $c$- and $\bar c$-transforms of each other, i.e., we may only consider pairs $(\varphi,\psi)$ with 
\[
\varphi(x) = \inf_{y\in \cY} c_{\mathbf{A}}(x,y) - \psi(y)\qquad \mbox{and} \qquad\psi(y) = \inf_{x\in \cX} c_{\mathbf{A}}(x,y) - \varphi(x).
\]
Observing that $c_{\mathbf{A}}$ is concave in both arguments, we see that $\varphi$ and $\psi$ are both concave. Indeed, one readily verifies that the epigraphs of $-\varphi$ and $-\psi$ are convex sets, since for any $\alpha\in[0,1]$ and $x_1,x_2\in\cX$, we have
\[
\varphi\big(\alpha x_1+(1-\alpha)x_2\big)\geq \inf_{y\in\cY}\alpha c_\mathbf{A}(x_1,y)+(1-\alpha )c_\mathbf{A}(x_1,y)-\psi(y)\geq \alpha\varphi(x_1)+(1-\alpha)\varphi(x_2)
\]
and similarly for the other dual potential. 

\medskip
To bound the sup-norm of the augmented potentials, observe that the functional value is invariant to translations (i.e., $(\varphi-a,\psi+a)$ for some constant $a$). Since 
\[
\mathsf{OT}_{c_{\mathbf{A}}}(\mu,\nu) \geq  -\|c_{\mathbf{A}}\|_\infty \geq -4\big(1+4\sqrt{d_x d_y}\big)R^4,
\]
we further restrict to a class of functions with $\int \varphi d \mu+\int\psi d \nu\geq -2\Big(1+2\big(1+4\sqrt{d_x d_y}\big)R^4\Big)$. For such functions there must exist a point $(x_0,y_0)$, for which
\begin{align*}
    \varphi(x_0)+\psi(y_0) \geq -2\Big(1+2\big(1+4\sqrt{d_x d_y}\big)R^4\Big),
\end{align*}
and by shifting the potentials to coincide on $(x_0,y_0)$, i.e., $\varphi(x_0)=\psi(y_0)$, we obtain
\[
    \varphi(x_0) \geq -1-2\big(1+4\sqrt{d_x d_y}\big)R^4\qquad\mbox{and}\qquad \psi(y_0) \geq -1-2\big(1+4\sqrt{d_x d_y}\big)R^4.
\]
By the constraint, we then have
\begin{align*}
    \varphi(x)&\leq c_{\mathbf{A}}(x,y_0) - \psi(y_0)\leq 1+6\big(1+4\sqrt{d_x d_y}\big)R^4\\
    \psi(y)&\leq c_{\mathbf{A}}(x_0,y) - \varphi(x_0)\leq 1+6\big(1+4\sqrt{d_x d_y}\big)R^4.
\end{align*}
From the above, we also deduce 
\begin{align*}
    -\varphi(x)&\leq \|c_{\mathbf{A}}\|_\infty + \|\psi\|_\infty  \leq 1+10\big(1+4\sqrt{d_x d_y}\big)R^4\\
    -\psi(y)&\leq \|c_{\mathbf{A}}\|_\infty + \|\varphi\|_\infty  \leq 1+10\big(1+4\sqrt{d_x d_y}\big)R^4,
\end{align*}
which concludes the boundedness. 

\medskip
For Lipschitzness of optimal potentials note that for any $x\in\RR^{d_x}$ we can find a sequence $\{y_k\}_{k\in\NN}\subset\RR^{d_y}$, such that $\varphi(x)\leq c_{\mathbf{A}}(x,y_k)-\psi(y_k)\leq \varphi(x)+1/k$. So for any $x'\neq x$,
\begin{align*}
    \varphi(x') - \varphi(x) &\leq c_{\mathbf{A}}(x',y_k)-\psi(y_k) - \big(c_{\mathbf{A}}(x,y_k)-\psi(y_k)\big) + \frac{1}{k}\\
    & =\frac{1}{k} + 4\|y_k\|^2\big(\|x\|^2-\|x'\|^2\big) + 32 (x-x')^\intercal  \mathbf{A}y\\
    &\leq \frac{1}{k} + 8\big(1+2\sqrt{d_x d_y}\big)R^3 \|x-x'\|.
\end{align*}
Now take $k\to\infty$ and interchange $x,x'$ to conclude that the $\varphi$ is Lipschitz. Applying the same argument for $\psi$ concludes the proof of the lemma.\qed

 {
\subsection{Proof of \cref{lemma:covering}}\label{appen:proof:lemma:covering}
We aim to prove the covering bound
\[
N\left(\xi,\cup_{\bA\in\cD_{R^2}} S(\cF_\bA^{\mspace{1mu}c}),\|\cdot\|_\infty\right) \leq N\left(\frac{\xi}{64R^2},\cD_{R^2},\|\cdot\|_\op\right) N\left(\frac{\xi}{2},\widetilde\cF_{d_x},\|\cdot\|_\infty\right).
\]
First, note that by \cref{lemma:smoothness_of_GW_potential}, we have
\begin{equation}
N\left(\xi,\cup_{\bA\in\cD_{R^2}} S(\cF_\bA^{\mspace{1mu}c}),\|\cdot\|_\infty\right) \leq N\left(\xi,\cup_{\bA\in\cD_{R^2}} S(\cF_R^{\mspace{1mu}c}),\|\cdot\|_\infty\right). \label{eq:covering}
\end{equation}
Set $\xi_1=\frac{\xi}{64R^2}$ and $\xi_2=\frac{\xi}{2}$, and take a $\xi_1$-net $\{\bA_i\}_{i=1}^{N_1}$ of $\cD_{R^2}$ and a $\xi_2$-net $\{\varphi_i\}_{i=1}^{N_2}$ of $\widetilde\cF_{d_x}$. For $i=1,\ldots,N_1$ and $j=1,\ldots, N_2$, define the functions $g_{i,j}:\RR^{d_y}\to\RR$ by
\[
g_{i,j}(y)=S\Big[\inf_x \big(c_{\bA_j}(x,y)-(S^{-1}\varphi_i)(x)\big)\Big],
\]
where $(S\varphi)(z)\coloneqq \varphi(Rz)/(1+C_{d_x,d_y} R^4)$ is the rescaling operator defined after Eq. \eqref{eq:GW_sample_complex_S2_emp}. We will show that $\{g_{i,j}\}_{i,j=(1,1)}^{(N_1,N_2)}$ forms a $\xi$-net of $\cup_{\bA\in\cD_{R^2}} S(\cF_R^{\mspace{1mu}c})$, which together with the covering bound from \eqref{eq:covering} yields the result. Indeed, for any $\varphi\in\cF_R$, we have
\begin{align*}
    \left\|S\Big[\inf_x \big(c_\bA(x,\cdot)-\varphi(x)\big)\Big] -g_{i,j} \right\|_\infty    &\leq \sup_{x,y} \frac{| 32x^\intercal (\bA  -  \bA_i) y |}{1+C_{d_x,d_y} R^4} + \|\varphi-\varphi_j\|_\infty\\
    &\leq \frac{ 32 R^2}{1+C_{d_x,d_y} R^4}\xi_1 + \xi_2\\
    &\leq \xi,
\end{align*}
which concludes the proof. \qed
}

 {
\subsection{Proof of Lemma \ref{lemma:2sample_LB}}\label{appen:proof:lemma:2sample_LB}

Using \cref{lemma:equivalence_gw_w} along with the centering step from the proof of the one-sample lower bound, we have
\begin{align*}
\EE[\GW(\hat{\mu}_n,\hat\mu'_n)^2]& = \EE[\GW(\tilde{\mu}_n,\tilde\mu'_n)^2]\\
&\gtrsim \EE\left[\lambda_{\mathrm{min}}(\bsigma_{\tilde{\mu}_n})\inf_{\mathbf{U}\in O(d)} \mathsf{W}_2(\tilde{\mu}_n,\mathbf{U}_\sharp\tilde\mu'_n)^2\right]\\
&\gtrsim \EE\left[\lambda_{\mathrm{min}}(\bsigma_{\hat{\mu}_n})\inf_{\mathbf{U}\in O(d)} \mathsf{W}_2(\hat{\mu}_n,\mathbf{U}_\sharp\hat\mu'_n)^2\right] -2\EE\left[\|\bar{x}_n\|\right] \\
&\geq\EE\left[\lambda_{\mathrm{min}}(\bsigma_{\hat{\mu}_n})\EE\left[\inf_{\mathbf{U}\in O(d)} \mathsf{W}_1(\hat{\mu}_n,\mathbf{U}_\sharp\hat\mu'_n)^2\middle|X_1,\ldots,X_n\right]\right]-2\sqrt{\frac{M_2(\mu)}{n}}.
\end{align*}
To justify the third step above, observe that
\begin{align*}
    \big|\lambda_{\mathrm{min}}(\bsigma_{\hat{\mu}_n})-\lambda_{\mathrm{min}}(\bsigma_{\tilde{\mu}_n})\big|\leq \sup_{\|v\|=1} \hat{\mu}_n\big||v\cdot x|^2-|v\cdot (x-\bar{x}'_n)|^2\big|\leq 3\|\bar{x}'_n\|,
\end{align*}
and
\begin{align*}  
\bigg|\inf_{\mathbf{U}\in O(d)} &\mathsf{W}_2(\tilde{\mu}_n,\mathbf{U}_\sharp\tilde\mu'_n)^2 - \inf_{\mathbf{U}\in O(d)} \mathsf{W}_2(\hat{\mu}_n,\mathbf{U}_\sharp\hat\mu'_n)^2\bigg|\\
&\leq \sup_{\mathbf{U}\in O(d)}\left|\mathsf{W}_2(\tilde{\mu}_n,\mathbf{U}_\sharp\tilde\mu'_n)^2 - \mathsf{W}_2(\hat{\mu}_n,\mathbf{U}_\sharp\hat\mu'_n)^2\right|\\
&= \sup_{\mathbf{U}\in O(d)}\big(\sW_2(\tilde{\mu}_n,\mathbf{U}_\sharp\tilde\mu'_n) + \sW_2(\hat{\mu}_n,\mathbf{U}_\sharp\hat\mu'_n)\big)\big| \mathsf{W}_2(\tilde{\mu}_n,\mathbf{U}_\sharp\tilde\mu'_n) - \mathsf{W}_2(\hat{\mu}_n,\mathbf{U}_\sharp\hat\mu'_n)\big|\\
    &\leq \sup_{\mathbf{U}\in O(d)} \big(\sW_2(\tilde{\mu}_n,\mathbf{U}_\sharp\tilde\mu'_n) + \sW_2(\hat{\mu}_n,\mathbf{U}_\sharp\hat\mu'_n)\big)\big(\sW_2(\hat{\mu}_n,\tilde{\mu}_n)+ \sW_2(\mathbf{U}_\sharp\hat\mu'_n,\mathbf{U}_\sharp\tilde\mu'_n)\big)\\
    &\leq 6\big(\sW_2(\hat{\mu}_n,\tilde{\mu}_n)+ \sW_2(\hat\mu'_n,\tilde\mu'_n)\big)\\
    &\leq 6(\|\bar{x}_n\|+\|\bar{x}'_n\|).
\end{align*}
Together, these imply the desired bound as $\inf_{\mathbf{U}\in O(d)} \mathsf{W}_2(\hat{\mu}_n,\mathbf{U}_\sharp\hat\mu'_n)^2\leq4$, $\lambda_{\mathrm{min}}(\bsigma_{\tilde{\mu}_n})\leq 1$, and
\begin{align*}
    \bigg|\lambda_{\mathrm{min}}(\bsigma_{\tilde{\mu}_n})\inf_{\mathbf{U}\in O(d)}& \mathsf{W}_2(\tilde{\mu}_n,\mathbf{U}_\sharp\tilde\mu'_n)^2 - \lambda_{\mathrm{min}}(\bsigma_{\hat{\mu}_n})\inf_{\mathbf{U}\in O(d)} \mathsf{W}_2(\hat{\mu}_n,\mathbf{U}_\sharp\hat\mu'_n)^2 \bigg|\\
    &\leq \big|\lambda_{\mathrm{min}}(\bsigma_{\tilde{\mu}_n})-\lambda_{\mathrm{min}}(\bsigma_{\hat{\mu}_n}) \big|\inf_{\mathbf{U}\in O(d)} \mathsf{W}_2(\hat{\mu}_n,\mathbf{U}_\sharp\hat\mu'_n)^2 \\
    &\qquad+ \lambda_{\mathrm{min}}(\bsigma_{\tilde{\mu}_n})\bigg|\inf_{\mathbf{U}\in O(d)} \mathsf{W}_2(\tilde{\mu}_n,\mathbf{U}_\sharp\tilde\mu'_n)^2 - \inf_{\mathbf{U}\in O(d)} \mathsf{W}_2(\hat{\mu}_n,\mathbf{U}_\sharp\hat\mu'_n)^2 \bigg|,
\end{align*}
which validates \eqref{eq:GW_LB_2sample}.

}

 {
\subsection{Proof of Lemma \ref{lemma:technical_lemma}}\label{appen:proof:lemma:technical_lemma}

Consider the following decomposition
    \begin{align*}
    \EE &\left[\inf_{\mathbf{U}\in O(d)}\sW_1(\hat{\mu}_n,\mathbf{U}_\sharp\nu)\right]\\
    &\quad= \EE\left[\inf_{\mathbf{U}\in O(d)}\sW_1(\hat{\mu}_n,\mathbf{U}_\sharp\nu)\mspace{-3mu} - \mspace{-3mu}\inf_{\mathbf{U}\in O(d)}\EE\big[\sW_1(\hat{\mu}_n,\mathbf{U}_\sharp\nu)\big]\right] \mspace{-3mu}+\mspace{-3mu} \inf_{\mathbf{U}\in O(d)}\EE\big[\sW_1(\hat{\mu}_n,\mathbf{U}_\sharp\nu)\big]\\
    &\quad \geq \EE\left[\inf_{\mathbf{U}\in O(d)}\Big(\sW_1(\hat{\mu}_n,\mathbf{U}_\sharp\nu) - \EE\big[\sW_1(\hat{\mu}_n,\mathbf{U}_\sharp\nu)\big]\Big)\right] + \inf_{\mathbf{U}\in O(d)}\EE\big[\sW_1(\hat{\mu}_n,\mathbf{U}_\sharp\nu)\big]\\
    &\quad= -\EE\left[\sup_{\mathbf{U}\in O(d)}\Big( \EE\big[\sW_1(\hat{\mu}_n,\mathbf{U}_\sharp\nu)\big] - \sW_1(\hat{\mu}_n,\mathbf{U}_\sharp\nu) \Big) \right] + \inf_{\mathbf{U}\in O(d)}\EE\big[\sW_1(\hat{\mu}_n,\mathbf{U}_\sharp\nu)\big].
    \end{align*}
    Denoting $R_\mathbf{U}\coloneqq \EE\big[\sW_1(\hat{\mu}_n,\mathbf{U}_\sharp\nu)\big] - \sW_1(\hat{\mu}_n,\mathbf{U}_\sharp\nu)$, we proceed to upper bound $\EE[\sup_\mathbf{U} R_\mathbf{U}]$. Note that $|R_\mathbf{U}-R_\mathbf{V}|\leq 2\sW_1(\mathbf{U}_\sharp\nu,\mathbf{V}_\sharp\nu)\leq 2\|\mathbf{U}-\mathbf{V}\|_\op$, and thus the process $\{R_\mathbf{U}\}_{\mathbf{U}\in O(d)}$ is Lipschitz in $\mathbf{U}$. We further claim that $\{R_\mathbf{U}\}_{\mathbf{U}\in O(d)}$ is a sub-Gaussian process. To see, for fixed $\mathbf{U}\in O(d)$, define the function $w_{\mathbf{U}}\mspace{-3mu}:\mspace{-3mu}(x_1,\cdots,x_n)\mspace{-3mu}\in \mspace{-3mu}B_d(0,1)^n\mspace{-3mu}\mapsto \mspace{-3mu} \sW_1\big(n^{-1}\sum_{i=1}^n\delta_{x_i},\mspace{-3mu}\mathbf{U}\nu\big)$ and note that it has bounded differences:
            \begin{align*}
            &\sup_{x_i,x'_i}\big|w_\mathbf{U}(x_1,\mspace{-1mu}\cdots\mspace{-1mu},x_{i-1},x_i,x_{i+1},\mspace{-1mu}\cdots\mspace{-1mu},x_n)-w_\mathbf{U}(x_1,\mspace{-1mu}\cdots\mspace{-1mu},x_{i-1},x'_i,x_{i+1},\mspace{-1mu}\cdots\mspace{-1mu},x_n)\big|\\
            &\qquad\leq \frac{\|x_i-x'_i\|}{n}\leq \frac{2}{n},
            \end{align*}
   For $X_1,\ldots,X_n$ i.i.d. from $\mu$ (for which $\supp(\mu)\subset B_d(0,1)$ by assumption), McDiarmid's inequality now yields
            \begin{align*}
                \PP\big(\big|w_\mathbf{U}(X_1,\ldots,X_n)-\EE[w_\mathbf{U}(X_1,\ldots,X_n)]\big|\geq t\big)\leq 2e^{-nt^2/2}.
            \end{align*}
    Observing that $R_\mathbf{U}=\EE[w_\mathbf{U}(X_1,\ldots,X_n)]-w_\mathbf{U}(X_1,\ldots,X_n)$, by equivalence between definitions of sub-Gaussianity, we further obtain $\EE[e^{sR_\mathbf{U}}]\leq e^{s^2\sigma^2/2}$, for all $s$, where $\sigma = \frac{3\sqrt{2}}{\sqrt{n}}$. Thus, $\{R_\mathbf{U}\}_{\mathbf{U}\in O(d)}$ is indeed sub-Gaussian.

    Combining Lipschitzness and sub-Gaussianity, we deploy a standard $\epsilon$-net argument. Let $\{\mathbf{U}_i\}_{i=1}^N$ is an $\epsilon$-net of $O(d)$ w.r.t. the operator norm. We have
    \begin{align*}
    \EE\big[\sup\nolimits_\mathbf{U} R_\mathbf{U}\big] &\leq \inf_{\epsilon>0}2\epsilon + \EE\left[\max_{i=1,\ldots,N} R_{\mathbf{U}_i} \right]\\
    &\leq \inf_{\epsilon>0}2\epsilon + \frac{3\sqrt{2}}{\sqrt{n}}\sqrt{\log\big(N(O(d),\epsilon,\|\cdot\|_\op)\big)}\\
    &\lesssim_{d} \sqrt{\log(n)}/\sqrt{n},
    \end{align*}
    where the last step uses the fact that $\log\big(N(O(d),\epsilon,\|\cdot\|_\op)\big)\leq (c\sqrt{d}\epsilon^{-1})^{d^2}$, for a universal constant $c$; cf. Lemma 4 from \cite{niles2022estimation}. This concludes the proof. \qed
}

\section{Proof of Proposition \ref{prop:1d_opt}}\label{sec:prop:1d_opt_proof}
As mentioned before, for any $\sigma^\star\in\cS^\star$, 
we have 
\[
a^\star=\frac{1}{2}\int xy d \pi^\star(x,y)\in\cA^\star,
\]
where $\pi^\star$ is the coupling induced by $\sigma^\star$, and consequently $(a^\star,\pi^\star)$ jointly minimize \eqref{eq:GW_1d_S2}. For the first direction, suppose that $\cA^\star\subset\{0.5 W_-,0.5 W_+\}$ but that there exists $\tilde\sigma\notin\{\mathrm{id},\overline{\mathrm{id}}\}$ that optimizes~\eqref{eq:GW_1d}. Denoting the corresponding coupling by $\tilde{\pi}$, the above implies that $\tilde{a}=\frac{1}{2}\int xy d \tilde\pi(x,y)\in\cA^*$. However, by the rearrangement inequality 
$0.5 W_-<\tilde{a}<0.5 W_+$, which is a contradiction. Since the minimum in \eqref{eq:GW_1d} is achieved, we conclude that $\cS^\star\subset\{\mathrm{id},\overline{\mathrm{id}}\}$.

\medskip
For the other direction, suppose that $\cS^\star\mspace{-3mu}\subset\mspace{-3mu}\{\mathrm{id},\overline{\mathrm{id}}\}$ but that there exists $\tilde a\in(0.5 W_-,0.5 W_+)$ with $\tilde a\in\cA^\star$. We first argue the $f+g$ is differentiable at $\tilde a$. This follows because $g$ is piecewise linear and concave, and so for any non-differentiability point $a_0$, the left and right derivatives satisfy $g'_-(a_0)>g'_+(a_0)$. Since $f$ is smooth, we further obtain $(f+g)'_-(a_0)>(f+g)'_+(a_0)$, so $a_0$ cannot be a local minimum. We conclude that $f+g$ is differentiable at $\tilde{a}$ with $(f+g)'(\tilde a)=0$.

Having that, let $\Pi_{\tilde a}\subset\Pi(\mu,\nu)$ be the argmin set for $g(\tilde a)$ and fix $\tilde\pi\in\Pi_{\tilde a}$. Since $(f+g)'(\tilde a)=0$, computing the derivative, we obtain 
\[
64\tilde a-32\int xy d \tilde\pi(x,y)=0.
\]
Thus, $\int xy d \tilde\pi(x,y)=2\tilde a$, for every $\pi\in\Pi_{\tilde a}$. Now, Since $(\tilde a,\tilde\pi)$ minimize \eqref{eq:GW_1d_S2}, consider
\begin{align*}
    \EGW^2(\mu,\nu)&= 32\tilde{a}^2+  \inf_{\pi\in\Pi_{\tilde a}}\int \big(-4x^2y^2-32\tilde{a}xy \big) d \pi(x,y)\\
    &=   \inf_{\pi\in\Pi_{\tilde a}}32\tilde{a}^2+\int \big(-4x^2y^2-32\tilde{a}xy \big) d \pi(x,y)\\
    & = \inf_{\pi\in\Pi_{\tilde a}}8 \left(\int xy d \pi(x,y)\right)^2+\int -4x^2y^2  d \pi(x,y) - 16 \left(\int xy d \pi(x,y)\right)^2 \\
    & = \inf_{\pi\in\Pi_{\tilde a}}\int -4x^2y^2  d \pi(x,y) - 8 \left(\int xy d \pi(x,y)\right)^2.
\end{align*}
Recalling that 
\[\EGW^2(\mu,\nu) = \inf_{\pi\in\Pi(\mu,\nu)}\int -4x^2y^2  d \pi(x,y) - 8 \left(\int xy d \pi(x,y)\right)^2    
    \]
by definition (see below \eqref{eq:egw_decomposition}), we conclude that all elements of $\Pi_{\tilde a}$ are minimizers for $\EGW^2$, and hence also minimizers of $\GW(\mu,\nu)$.

To get a contradiction, recall that $\int xy d \tilde\pi(x,y)=2\tilde a$, for all $\tilde{\pi}\in\Pi_{\tilde a}$. Since $\tilde a\in(0.5 W_-,0.5 W_+)$, one readily verifies $W_-<\int xy d \tilde\pi(x,y)<W_+$. However, the couplings induced by $\mathrm{id}$ and $\overline{\mathrm{id}}$ achieve exactly $W_+$ and $W_-$ for the said integral, and thus they are not contained in $\Pi_{\tilde a}$. Since by assumption the argmin is $\cS^\star=\{\mathrm{id},\overline{\mathrm{id}}\}$, we again have a contradiction and $\tilde a$ cannot be optimal for~\eqref{eq:GW_1d_S2}. The infimum must therefore be achieved on the boundary, i.e., $\cA^\star\subset\{0.5W_-,0.5W_+\}$, which concludes the proof. \qed

\section{Generalized Duality}\label{appen:generalized_dual}

 {
We derive here the generalized dual representation for the GW distance of order $(p,q)=(2,2k)$, where $k\in\NN$. The approach naturally extends to any even $p$ value, but the cost of tedious technical details, which we prefer to avoid for presentation. Like in \cref{appen:proof:lemma:equivalence_gw_w}, we omit dummy variables from our integral notation, writing $\int f(x,y,x',y')d\pi\otimes\pi$ instead of $\int f(x,y,x',y')d\pi\otimes\pi(x,y,x',y')$. Let $(\mu,\nu)\in\cP_{4k}(\RR^{d_x})\times \cP_{4k}(\RR^{d_y})$, and expand the distortion cost to obtain
\[
        \GW_{2,2k}(\mu,\nu)^2\mspace{-3mu} = \mspace{-3mu}\inf_{\pi\in\Pi(\mu,\nu)} \int \mspace{-3mu}\left( \big| (\|x\|^2\mspace{-3mu}-\mspace{-3mu}2x\cdot x' \mspace{-3mu}+\mspace{-3mu}\|x'\|^2)^k \mspace{-3mu}-\mspace{-3mu} (\|y\|^2\mspace{-3mu}-\mspace{-3mu}2y\cdot y'\mspace{-3mu} +\mspace{-3mu}\|y'\|^2)^k \big|^2\right) d\pi \otimes\pi.
\]
Collecting terms that depend only on the marginals into $S^1(\mu,\nu)$ as before and omitting them for now, we seek a dual for the optimization problem
\begin{align*}
    \inf_{\pi\in\Pi(\mu,\nu)} \int \left( - 2(\|x\|^2-2x\cdot x' +\|x'\|^2)^k (\|y\|^2-2y\cdot y' +\|y'\|^2)^k \right) d\pi \otimes\pi.
\end{align*}
The integrand is a homogeneous polynomial that is symmetric in $(x,y)$ and $(x',y')$. Consequently, there exist polynomials $f_1,\ldots,f_m$ of degree at most $4k$, and a symmetric matrix $\mathbf{C}\in\RR^{m\times m}$ (whose entries are denoted by $C_{ij}$, for $i,j=1,\ldots,m$), such~that 
\begin{align*}
    &\inf_{\pi\in\Pi(\mu,\nu)} \int \left( -2 (\|x\|^2-2x\cdot x' +\|x'\|^2)^k (\|y\|^2-2y\cdot y' +\|y'\|^2)^k \right) d\pi \otimes\pi\\
    &= \inf_{\pi\in\Pi(\mu,\nu)} \int \Big( -2\|x\|^{2k}\|y\|^{2k} - 2\|x'\|^{2k}\|y'\|^{2k} + \sum_{1\leq i,j\leq m} C_{ij}f_i(x,y)f_j(x',y')\Big) d\pi \otimes\pi\\
    & = \inf_{\pi\in\Pi(\mu,\nu)} \int -4\|x\|^{2k}\|y\|^{2k} d\pi   + \sum_{1\leq i,j\leq m} C_{ij}\int f_i(x',y') d\pi \int f_j(x,y) d\pi .
\end{align*}
Note that $m$ is bounded by the number of monomials of degree at most $4k$ that can constructed from entries of $x,y$, i.e., $m = O((d_x+d_y)^{4k})$.\footnote{In practice, $m$ is often be much smaller, as seen from the example below.} By diagonalizing $C$, we rewrite
\begin{equation}
\sum_{1\leq i,j\leq m}\mspace{-3mu} C_{ij}\int f_i(x,y) d\pi \int f_j(x',y') d\pi= \sum_{i=1}^{\ell} \left(\int g_i(x,y) d\pi \right)^2  \mspace{-3mu}- \mspace{-3mu} \sum_{i=\ell+1}^{m} \left(\int g_i(x,y) d\pi \right)^2,\label{eq:gen_dual_SoS}
\end{equation}
where $g_i$ are linear combinations of $f_i$, and $\ell$ is the number of positive eigenvalues of $\mathbf{C}$. Notice that the sum of squares on the RHS above can have positive or negative coefficient, which differs form the $(2,2)$ case where only a negative coefficient is present. 

\medskip
Armed with \eqref{eq:gen_dual_SoS}, we proceed with the same linearization step from the proof of \cref{thm:egw_duality} by introducing the new auxiliary optimization variables $a\in\RR^{\ell}$ and $b\in\RR^{m-\ell}$, as follows
\begin{align*}
    &\inf_{\pi\in\Pi(\mu,\nu)} \int \left( - 2(\|x\|^2-2x\cdot x' +\|x'\|^2)^k (\|y\|^2-2y\cdot y' +\|y'\|^2)^k \right) d\pi \otimes\pi \\
    & = \inf_{\pi\in\Pi(\mu,\nu)} \int -4\|x\|^{2k}\|y\|^{2k} d\pi   + \sum_{i=1}^{\ell} \left(\int g_i(x,y) d\pi \right)^{\mspace{-3mu}2} \mspace{-5mu}-\mspace{-5mu} \sum_{i=\ell+1}^{m} \left(\int g_i(x,y) d\pi \right)^{\mspace{-3mu}2}\\
    & = \inf_{\pi\in\Pi(\mu,\nu)}\int -4\|x\|^{2k}\|y\|^{2k} d\pi   + \sup_{a\in\RR^\ell}\sum_{i=1}^{\ell} \left( 4a_i\int g_i(x,y) d\pi -4a_i^2\right) \\
    &\hspace{13.3em}\qquad\qquad+\inf_{b\in\RR^{m-\ell}} \sum_{i=\ell+1}^{m} \left(4b_{i-\ell}^2 -4b_{i-\ell}\int g_i(x,y) d\pi \right)\\
    & = 4\sup_{a\in\RR^\ell} \inf_{b\in\RR^{m-\ell}}-\|a\|^2\mspace{-3mu}+\|b\|^2\\
    &\qquad\qquad\qquad\quad +\inf_{\pi\in\Pi(\mu,\nu)}\int \left(-\|x\|^{2k}\|y\|^{2k} +\sum_{i=1}^{\ell}a_i g_i(x,y)- \sum_{i=\ell+1}^{m} b_{i-\ell} g_i(x,y)\right)d\pi,
\end{align*}
where the last step follows from Sion's minimax theorem. The RHS above is the desired dual representation from \eqref{eq:gen_dual}. Further observe that as $\int g_i d\pi$ are uniformly bounded for all $i$ and $\pi$, we may restrict optimization domains for $a$ and $b$ to compact sets. We identify the inner optimization over $\pi$ as an OT problem with cost $c_{a,b}:(x,y) \mapsto -\|x\|^{2k}\|y\|^{2k} + \sum_{i=1}^{\ell}a_i g_i(x,y) - \sum_{i=\ell+1}^{m}b_{i-\ell} g_i(x,y)$, which is smooth (indeed, a polynomial) but not necessarily concave in $x$ or $y$. Considering compactly supported distributions, one may invoke OT duality and establish Lipschitzness of the optimal potential, although convexity seems challenging to obtain in general. As explain in \cref{sec:summary}, by following the steps in the proof of \cref{thm:GW_sample_complex}, this leads to a two-sample empirical convergence rate of $O(n^{-1/(d_x\wedge d_y)})$. We leave further refinements of this rate as well as proofs of lower bounds for future work.

\medskip
To illustrate the above procedure, we consider the special case of $p=q=2$. This will also show how the duality formula from \eqref{eq:gen_dual} reduces back to that from \cref{cor:gw_duality}, after assuming that the populations are centered. 
As above, we start by expanding the $(2,2)$-cost and omitting terms that depend only on the marginals (cf. \eqref{eq:proof_of_EGW_decomposition}), to arrive at
\begin{align*}
    &\inf_{\pi\in\Pi(\mu,\nu)}  \int -4\|x\|^2\|y\|^2  d \pi  \\
    &\qquad\qquad+ 4\int \Big( \langle x,x' \rangle (\|y\|^2+\|y'\|^2) + (\|x\|^2+\|x'\|^2)\langle y,y' \rangle-2\langle x,x'\rangle\langle y,y'\rangle\Big) d \pi\otimes\pi \\
    & = \inf_{\pi\in\Pi(\mu,\nu)}  \int \left( -4\sum_{ 1\leq i\leq d_x, 1\leq j \leq d_y} x_i^2y_j^2 \right) d \pi\\
    &\qquad\qquad+ 4\int \left(\sum_{ 1\leq i\leq d_x, 1\leq j \leq d_y}  x_ix'_i(y_j^2+y'{}_j^2) + (x_i^2+x'{}_i^2)y_jy'_j-2x_ix'_iy_jy'_j \right)d \pi\otimes\pi\numberthis\label{eq:gen_duality_22_functional}
\end{align*}
To diagonalize the second term, consider the set of linearly independent monomials $\{x_i,y_j,x_iy_j^2,x_i^2y_j,x_iy_j\}_{1\leq i\leq d_x,1\leq j \leq d_y}$, of which there are $d_x+d_y+3d_xd_y$ in total (these are denoted by $f_i$ in the general derivation above). 
For concreteness and simplicity, we henceforth assume $d_x=d_y=1$. Define the vector 
$$v(\pi) = \left(\int x\dd\pi,\int y\dd\pi,\int x y^2\dd\pi,\int x^2y\dd\pi,\int xy\dd\pi\right)^\intercal,$$
and construct coefficient matrix 
$$\mathbf{C}=\begin{bmatrix}
0 & 0 & 1 & 0 & 0\\
0 & 0 & 0 & 1 & 0\\
1 & 0 & 0 & 0 & 0\\
0 & 1 & 0 & 0 & 0\\
0 & 0 & 0 & 0 & -2
\end{bmatrix}.$$
For instance, we set $C_{1,3}=1$ since the term $\int x d\pi\int xy^2 d\pi$, which is the product of $v_1(\pi)$ and $v_3(\pi)$, appears inside the functional from \eqref{eq:gen_duality_22_functional}. We may now express
\begin{align*}
    \inf_{\pi\in\Pi(\mu,\nu)} -4 \int x^2y^2 d \pi  + 4\int &\left(  xx'y^2 +x'xy'^2 + x^2yy' + x'^2y'y - 2xx'yy'\right) d \pi\otimes\pi\\
    &\qquad\qquad\qquad\quad = \inf_{\pi\in\Pi(\mu,\nu)} -4 \int x^2y^2 d \pi + 4v(\pi)^\intercal \mathbf{C}v(\pi).
\end{align*}
Diagonalizing $\mathbf{C}$, further yields
\begin{align*}
    v(\pi)^\intercal \mathbf{C}v(\pi) &= \left(\int \frac{\sqrt{2}}{2}x + \frac{\sqrt{2}}{2}xy^2 d\pi\right)^{\mspace{-3mu}2} + \left(\int \frac{\sqrt{2}}{2}y + \frac{\sqrt{2}}{2}x^2y d\pi\right)^{\mspace{-3mu}2}\\
    & -\left(\int -\frac{\sqrt{2}}{2}x + \frac{\sqrt{2}}{2}xy^2 d\pi\right)^{\mspace{-3mu}2} \mspace{-3mu}-\mspace{-3mu} \left(\int -\frac{\sqrt{2}}{2}y + \frac{\sqrt{2}}{2}x^2y d\pi\right)^{\mspace{-3mu}2} \mspace{-3mu}-\mspace{-3mu} \left(\int \sqrt{2}xy d\pi\right)^{\mspace{-3mu}2}\!.
\end{align*}
We proceed by introducing $a\in\RR^2$ and $b\in\RR^3$, as follows
\begin{align*}
    &\inf_{\pi\in\Pi(\mu,\nu)} -4 \int   x^2y^2 d \pi  +   4\int \left( xx'y^2 +x'xy'^2 + x^2yy' + x'^2y'y - 2xx'yy'\right) d \pi\otimes\pi\\
    &=\inf_{\pi\in\Pi(\mu,\nu)}  -4\int x^2y^2 d \pi  +  \left(\int \sqrt{2}x + \sqrt{2}xy^2 d\pi\right)^{\mspace{-3mu}2} + \left(\int \sqrt{2}y + \sqrt{2}x^2y d\pi\right)^{\mspace{-3mu}2}\\
    &\qquad\qquad\ \ -\left(\int -\sqrt{2}x + \sqrt{2}xy^2 d\pi\right)^{\mspace{-3mu}2} - \left(\int -\sqrt{2}y + \sqrt{2}x^2y d\pi\right)^{\mspace{-3mu}2} - \left(\int 2\sqrt{2}xy d\pi\right)^{\mspace{-3mu}2}\\
    &=\inf_{\pi\in\Pi(\mu,\nu)}  -4\int x^2y^2 d \pi \\
    &\quad+  \sup_{a\in\RR^2} 4a_1 \int \left( \sqrt{2}x + \sqrt{2}xy^2 \right)d\pi -4a_1^2 + 4a_2\int\left(  \sqrt{2}y + \sqrt{2}x^2y\right) d\pi - 4a_2^2\\
    &\quad+\inf_{b\in\RR^3} 4b_1^2-4b_1\int \left(-\sqrt{2}x + \sqrt{2}xy^2 \right) d\pi  +4b_2^2- 4b_2\int \left( -\sqrt{2}y + \sqrt{2}x^2y\right) d\pi  \\
    &\hspace{26em}+4b_3^2 - 4b_3\int 2\sqrt{2}xy d\pi\\
    &=4\sup_{a\in\RR^2} \inf_{b\in\RR^3} -\|a\|^2+\|b\|^2 + \inf_{\pi\in\Pi(\mu,\nu)} \int c_{a,b}(x,y) d\pi ,
\end{align*}
where the cost function is 
\begin{align*}
c_{a,b}(x,y) = -x^2y^2 + \sqrt{2}a_1x + \sqrt{2}a_1xy^2 + \sqrt{2}a_2y &+ \sqrt{2}a_2x^2y +\sqrt{2}b_1x -\sqrt{2}b_1xy^2\\
&+ \sqrt{2}b_2y - \sqrt{2}b_2x^2y - 2\sqrt{2}b_3  xy.
\end{align*}
Lastly, notice that if $\mu,\nu$ are centered, then
\begin{align*}
    v(\pi)^\intercal \mathbf{C}v(\pi) = - 2\left(\int xy d\pi\right)^{\mspace{-3mu}2},
\end{align*}
which immediately recovers the dual form from \cref{cor:gw_duality}, where the OT cost function is $c_{\mathbf{A}}(x,y) = -4x^2y^2 - 32 \bA_{1,1}xy$. The cost $c_{a,b}$ that arises from the general derivation is evidently more complex and comprises additional mixed terms (of order 3). This makes it harder to analyze, e.g., it is unclear whether $c_{a,b}$ is marginally convex/concave in each argument. Consequently, this approach may not lead to the same regularity profile for dual potentials as we have in \cref{lemma:smoothness_of_GW_potential}, which, in turn, may result in suboptimal empirical convergence rates.}

\end{document}